\newtheorem{theorem}{Theorem}[section]
\newtheorem{lemma}[theorem]{Lemma}
\newtheorem{proposition}[theorem]{Proposition}
\newtheorem{remark}[theorem]{Remark}
\newtheorem{question}[theorem]{Question}
\newtheorem{ex-rem}[theorem]{Examples and Remarks}
\numberwithin{equation}{section}
\newcommand{\eps}{\varepsilon}
\newcommand{\R}{{\mathbb R}}
\def\r{\mathbb{R}}
\def\rn{\mathbb{R}^n}
\def\eps{\varepsilon}
\def\io{\int_{\Omega}}
\def\irn{\int_{\r^n}}
\def\vr{\varrho}
\def\o{\Omega}
\def\t{\Theta}
\def\tilde{\widetilde}
\def\cC{\mathcal{C}}
\def\cN{\mathcal{N}}
\def\cU{\mathcal{U}}
\def\what{\widehat}
\def\d{\,\mathrm{d}}
\author{Mónica Clapp\footnote{ M. Clapp was supported by Progetto di Ricerca Ateneo Sapienza, ``Partial Differential Equations in Analysis and Geometry" P.I. Angela Pistoia and by UNAM-DGAPA-PAPIIT grant IA100923 (Mexico).
},
Angela Pistoia
\footnote{A. Pistoia was supported by
INDAM-GNAMPA project ``Problemi di doppia curvatura su variet\`a  a bordo e legami con le EDP di tipo ellittico'' and of the project 
``Pattern formation in nonlinear phenomena'' funded by the MUR Progetti di Ricerca di Rilevante Interesse Nazionale (PRIN) Bando 2022 grant 
20227HX33Z.}, and
Alberto Saldaña
\footnote{ A. Saldaña is supported by the 2024 Visiting Professor Program of La Sapienza University (Italy), CONAHCYT grant CBF2023-2024-116 (Mexico), and by UNAM-DGAPA-PAPIIT grants IA100923 and IN102925 (Mexico).
}
}
\title{Multiple solutions to a semilinear elliptic equation with a sharp change of sign in the nonlinearity}
\date{}
\begin{document}
\maketitle

\renewcommand{\abstractname}{Abstract}
\begin{abstract}
We consider a nonautonomous semilinear elliptic problem where the power-type nonlinearity is multiplied by a discontinuous coefficient that takes the value one inside a bounded open set $\Omega$ and minus one in its complement. In the slightly subcritical regime, we prove the existence of concentrating  positive and nodal solutions.  Moreover, depending on the geometry of $\Omega$, we establish multiplicity of positive solutions. Finally, in the critical case, we show the existence of a blow-up positive solution when $\Omega$ has nontrivial topology.  Our proofs rely on a Lyapunov-Schmidt reduction strategy which in these problems turns out to be remarkably simple. We take this opportunity to highlight certain aspects of the method that are often overlooked and present it in a more accessible and detailed manner for nonexperts.

\medskip

\emph{Key words and phrases.} Nonautonomous nonlinearity, critical and slightly subcritical problem, blow up, Lyapunov--Schmidt.

\smallskip

\emph{2020 Mathematics Subject Classification.}
35B44, 
35B40, 
35B20, 
35J50. 
\end{abstract}

\section{Introduction} 
\label{sec:intro}
In this paper we construct multiple solutions to the following nonautonomous semilinear elliptic problem
\begin{equation}\label{eq:problem}
\begin{cases}
-\Delta u= Q_\Omega(x) |u|^{q-1}u,\\
u\in E:=D^{1,2}(\mathbb R^n)\cap L^{q+1}(\rn),
\end{cases}
\end{equation}
where $1<q\leq p:=\frac{n+2}{n-2}=2^*-1$, $\o\subset \rn$ is a bounded (not necessarily connected) smooth open subset, $n\geq 3$, and \  $Q_\Omega:={\mathds 1}_\Omega-{\mathds 1}_{\mathbb R^n\setminus\Omega}$, \ i.e.,
\begin{equation*}
Q_\Omega(x)=
\begin{cases}
1 & \text{if \ }x\in\Omega, \\
-1& \text{if \ }x\in\mathbb R^n\setminus\Omega.
\end{cases}
\end{equation*}
Here, as usual, $2^*:=\frac{2n}{n-2}$ denotes the critical Sobolev exponent and $D^{1,2}(\rn):=\{u\in L^{2^*}(\rn):\nabla u\in L^2(\rn,\rn)\}$.

This problem arises in connection with some models of optical waveguides propagating through a stratified dielectric medium, such as those in \cite{st1,st2}. As the self-focusing part of the dielectric response shrinks, the amplitude of the electric field concentrates and blows up, see \cite{as}. It is shown in \cite{fw} that its limiting profile is a least energy solution of \eqref{eq:problem}.

Problem \eqref{eq:problem} also arises from a system of competing species, each of which is attracted to a different region in space and is repelled from its complement. As the region of attraction shrinks, the components of the system concentrate and blow up and their limiting profile is, again, a solution of \eqref{eq:problem}, see \cite{css}. 

It is therefore interesting to study problem \eqref{eq:problem} and to describe the shape and qualitative properties of its solutions. Some results on their symmetry properties and their decay rate at infinity were recently established in \cite{CHS24}.

In this work, we use a Lyapunov-Schmidt reduction method to study \eqref{eq:problem} in three settings:
\begin{enumerate}
 \item Existence and multiplicity of positive solutions in the slightly subcritical regime, namely, when $q=\frac{n+2}{n-2}-\eps$ and $\eps>0$ is a small parameter.
 \item Existence of nodal solutions also in the slightly subcritical regime.
 \item Existence of a positive solution for the critical exponent $q=\frac{n+2}{n-2}$ in domains with a shrinking hole.
\end{enumerate}

The solutions that we construct exhibit concentration as some parameter goes to zero and have \emph{bubbles} as profiles. To be more precise, recall that the positive solutions to the critical problem
\begin{equation}\label{eq:yamabe}
-\Delta u=u^{p },\qquad u\in D^{1,2}(\rn),
\end{equation}
are the {\em standard bubble}
\begin{align*}
U(x):=\frac{\alpha_n}{(1+|x|^2)^\frac{n-2}{2}},\qquad \alpha_n:= (n(n-2))^\frac{n-2}{4}, \ x\in \rn,
\end{align*}
and its translations and dilations
\begin{equation}\label{bub}
U_{\delta,\xi}(x)
:=\delta^{-{\frac{n-2}{2}}}U\Big(\frac{x-\xi}{\delta}\Big)
=\frac{\alpha_n \delta^{{\frac{n-2}{2}}}}{(\delta^2+|x-\xi|^2)^\frac{n-2}{2}},
\qquad \xi\in\rn, \ \delta>0,\ x\in \rn.
\end{equation}

Our first result shows that the slightly subcritical problem has a positive solution for any bounded open set $\Omega\subset \rn$ and characterizes its concentration behavior as $\eps\to 0$. We write $\|\cdot\|$ for the norm in $D^{1,2}(\rn)$ and $|\cdot|_q$ for the norm in $L^q(\rn)$.

 \begin{theorem}
 \label{main1}
There exists $\eps_0>0$ such that, for each $\eps\in(0,\eps_0)$, the problem
\begin{equation}\label{eq:p}
\begin{cases}
-\Delta u= Q_\o |u|^{p-1-\eps}u,\\
u\in D^{1,2}(\mathbb R^n)\cap L^{p+1-\eps}(\rn),
\end{cases}
\end{equation}
has a positive solution of the form
\begin{align}\label{ueps}
 u_\eps=U_{\delta(\eps),\xi(\eps)}+\phi_\eps,\quad \text{ where \ }\max\{\|\phi_\eps\|,|\phi_\eps|_{p+1-\eps}\}\to 0\text{ as }\eps\to 0.
\end{align}
Here, the blow-up rate and the concentration point satisfy that
\begin{align}\label{ueps2}
\delta(\eps)\eps^{-\frac{1}{n}}\to d \quad \text{ and }\quad
\xi(\eps)\to \xi_0\quad \text{as $\eps\to 0$,}
\end{align}
where $d>0$ and $\xi_0\in\Omega$ is a global minimum of the function
\begin{equation}\label{psio}
 \psi_\Omega:\Omega\to\R\qquad \text{ given by }\qquad
\psi_\Omega(\xi):=\int_{\mathbb R^n\setminus \Omega} \frac1{|x-\xi|^{2n}}\d x.\end{equation}
\end{theorem}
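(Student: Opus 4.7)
I would approach the result by a Lyapunov--Schmidt reduction for the energy functional
\[
J_\eps(u) := \frac{1}{2}\int_{\rn}|\nabla u|^2\d x - \frac{1}{p+1-\eps}\int_{\rn} Q_\Omega |u|^{p+1-\eps}\d x,
\]
whose critical points on $E$ solve \eqref{eq:p}. It is convenient to write
\[
J_\eps(u)=J_\eps^+(u)+\frac{2}{p+1-\eps}\int_{\rn\setminus\Omega}|u|^{p+1-\eps}\d x,\qquad J_\eps^+(u):=\frac{1}{2}\|u\|^2-\frac{1}{p+1-\eps}\int_{\rn}|u|^{p+1-\eps}\d x,
\]
so that the effect of the sign change in $Q_\Omega$ is isolated in the positive nonautonomous remainder.

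The first step is to parameterize almost-solutions by the $(n+1)$-dimensional manifold of bubbles $Z=\{U_{\delta,\xi}:\delta>0,\ \xi\in\rn\}$, whose tangent space $K$ at $U_{\delta,\xi}$ is spanned by $\partial_\delta U_{\delta,\xi}$ and $\partial_{\xi_i}U_{\delta,\xi}$ ($i=1,\ldots,n$) and agrees with the kernel of the linearized Yamabe operator $L_0:=-\Delta-pU_{\delta,\xi}^{p-1}$. Writing $u=U_{\delta,\xi}+\phi$ with $\phi\in K^\perp$, the equation $J_\eps'(u)=0$ splits into an auxiliary equation on $K^\perp$ and a reduced finite-dimensional equation on $K$. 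Since $U_{\delta,\xi}^{p-1}$ concentrates at $\xi\in\Omega$, where $Q_\Omega=1$, the actual linearization $-\Delta-(p-\eps)Q_\Omega|U_{\delta,\xi}|^{p-1-\eps}$ is a small perturbation of $L_0$ and hence invertible on $K^\perp$ for $\eps$ and $\delta$ small; a contraction argument then yields a unique solution $\phi_{\delta,\xi,\eps}\in K^\perp$ of the auxiliary equation, smooth in $(\delta,\xi)$, with $\|\phi_{\delta,\xi,\eps}\|+|\phi_{\delta,\xi,\eps}|_{p+1-\eps}=o(1)$ as $\eps\to 0$, uniformly on compact subsets of $(0,\infty)\times\Omega$.

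The crucial step is the asymptotic expansion of the reduced functional $\widetilde J_\eps(\delta,\xi):=J_\eps(U_{\delta,\xi}+\phi_{\delta,\xi,\eps})$. The standard slightly subcritical computation on $\rn$ gives, with $A:=\|U\|^2$,
\[
J_\eps^+(U_{\delta,\xi})=\frac{A}{n}-\eps\bigl(a_1+a_2\log\delta\bigr)+o(\eps),
\]
with explicit constants and $a_2>0$. For the nonautonomous term, the change of variable $x=\xi+\delta y$ and the decay $U(y)\sim\alpha_n|y|^{-(n-2)}$ as $|y|\to\infty$ yield, for $\xi\in\Omega$,
\[
\int_{\rn\setminus\Omega}U_{\delta,\xi}^{p+1-\eps}\d x=\alpha_n^{p+1}\delta^n\psi_\Omega(\xi)\bigl(1+o(1)\bigr)\quad\text{as }\eps,\delta\to 0,
\]
the factor $\delta^n$ arising precisely because $(n-2)(p+1)/2=n$. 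Absorbing the contribution of $\phi_{\delta,\xi,\eps}$ (of order $\|\phi_{\delta,\xi,\eps}\|^2$, and hence a small remainder by the size estimate above) gives
\[
\widetilde J_\eps(\delta,\xi)=\frac{A}{n}-\eps\bigl(a_1+a_2\log\delta\bigr)+b\,\delta^n\psi_\Omega(\xi)+o(\eps+\delta^n),\qquad b:=\frac{2\alpha_n^{p+1}}{p+1}.
\]

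Finally, critical points of $\widetilde J_\eps$ are sought under the ansatz $\delta=d\eps^{1/n}$, which puts $-\eps a_2\log\delta$ and $b\delta^n\psi_\Omega(\xi)$ on the common scale $\eps$. Up to an $\xi$-independent $O(\eps\log\eps)$ term, one is led to the limit functional $(d,\xi)\mapsto -a_2\log d+bd^n\psi_\Omega(\xi)$, which is strictly convex in $d$ with unique minimizer $d(\xi)=[a_2/(nb\psi_\Omega(\xi))]^{1/n}$; substituting back, the minimum value is a strictly increasing function of $\psi_\Omega(\xi)$. Since $\psi_\Omega\to+\infty$ at $\partial\Omega$, minimization over $\xi\in\Omega$ forces $\xi\to\xi_0$, a global minimum of $\psi_\Omega$. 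A standard stability argument yields a genuine critical point of $\widetilde J_\eps$ close to $(d(\xi_0)\eps^{1/n},\xi_0)$, and hence, via the reduction, a solution of \eqref{eq:p} of the form \eqref{ueps} with the asymptotics \eqref{ueps2}; positivity follows from the closeness of $u_\eps$ to $U_{\delta(\eps),\xi(\eps)}>0$. The main obstacle is the precise asymptotic bookkeeping at this borderline regime: one must obtain a sharp $\|\phi_{\delta,\xi,\eps}\|$ estimate and track the expansions uniformly so that the $\delta^n\psi_\Omega(\xi)$ correction appears cleanly at the correct order, making the balance $\delta\sim\eps^{1/n}$ genuinely drive the concentration at $\xi_0$.
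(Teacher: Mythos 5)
Your proposal follows essentially the same Lyapunov--Schmidt scheme as the paper: the same ansatz $u=U_{\delta,\xi}+\phi$ with $\phi$ orthogonal to the kernel $K_{\delta,\xi}$, a contraction argument for the auxiliary equation, the same expansion of the reduced functional with the key identity $(n-2)(p+1)/2=n$ producing the $\delta^n\psi_\Omega(\xi)$ term, the same balance $\delta=d\eps^{1/n}$, and a stable critical point at the global minimum of $\Psi(d,\xi)=\mathfrak c_1 d^n\psi_\Omega(\xi)-\mathfrak c_2\ln d$. The paper phrases the reduction through the adjoint operator $i^*$ in a fixed space $E=D^{1,2}\cap L^s$, whereas you work with $J_\eps$ directly, but that is a cosmetic difference. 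You state only $\|\phi\|+|\phi|_{p+1-\eps}=o(1)$ while the paper proves the sharper $O(\eps^{(n+2)/(2n)})$; you do flag the need for sharpness, and indeed $\|\phi\|^2=o(\eps)$ is exactly what makes the $\eps$-expansion of $\widetilde J_\eps$ close.

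The one genuine gap is the positivity step. Closeness of $\phi_\eps$ to zero in $D^{1,2}\cap L^{p+1-\eps}$ does \emph{not} imply $u_\eps=U_{\delta,\xi}+\phi_\eps>0$ pointwise: the bubble decays to zero at infinity, so a perturbation that is small in integral norms can still make $u_\eps$ negative on a large set far from $\xi$. An actual argument is required. The paper's route (Lemma~\ref{lem:positive solution}) is variational: it shows $c_q\to\frac{1}{n}S^{n/2}$ and that any nontrivial solution with $J_q(u)<2c_q$ has one sign, then concludes via $J_\eps(u_\eps)\to\frac{1}{n}S^{n/2}$ and the strong maximum principle applied to $-\Delta u+{\mathds 1}_{\rn\setminus\Omega}u^q={\mathds 1}_\Omega u^q$. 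A direct alternative is to test the equation with $u_\eps^-:=\max\{-u_\eps,0\}$: since $U_{\delta,\xi}>0$ forces $u_\eps^-\le|\phi_\eps|$ pointwise, one obtains $\|u_\eps^-\|^2=\int Q_\Omega(u_\eps^-)^{p+1-\eps}\le\int_\Omega(u_\eps^-)^{p+1-\eps}\le|\phi_\eps|_{p+1-\eps}^{p+1-\eps}\to 0$, while H\"older and Sobolev on the bounded set $\Omega$ give $\|u_\eps^-\|\ge c>0$ whenever $u_\eps^-\not\equiv 0$, a contradiction for small $\eps$; then the strong maximum principle upgrades $u_\eps\ge0$ to $u_\eps>0$. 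Either way, the statement that positivity ``follows from closeness'' must be replaced by one of these arguments.
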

Note that the function $ \psi_\Omega$ is singular at $\partial \Omega$ and smooth in the interior of $\Omega$ (see Figure \ref{fig1} below).  Hence, the existence of a global minimum is always guaranteed.

Existence of positive (ground state) solutions to \eqref{eq:problem} for any $q\in (1,p)$ is already known, see \cite{fw} for the case when $\o$ is the unit ball and \cite{CHS24} for general $\o$.  Theorem~\ref{main1} gives a detailed characterization of their blow-up behavior as the exponent approaches the critical one.

Our method also allows us to prove the following multiplicity result.

\begin{theorem}
\label{main11}
Let $\xi_0\in\Omega$ be a non-degenerate critical point of the function $\psi_\Omega$ given by \eqref{psio}.  Then, there exists $\eps_0>0$ such that, for each $\eps\in(0,\eps_0),$ the problem \eqref{eq:p} has a positive solution $u_\eps$ satisfying \eqref{ueps} and \eqref{ueps2}.
 \end{theorem}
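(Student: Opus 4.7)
The plan is to follow the same Lyapunov--Schmidt reduction used for Theorem \ref{main1}, but to locate the concentration parameters by a non-degeneracy argument rather than by direct minimization. Writing $u=U_{\delta,\xi}+\phi$, I decompose the equation with respect to the $(n+1)$-dimensional kernel of $-\Delta-pU_{\delta,\xi}^{p-1}$ spanned by $\partial_\delta U_{\delta,\xi}$ and $\partial_{\xi_i}U_{\delta,\xi}$, $i=1,\dots,n$, and solve the auxiliary equation exactly as in Theorem \ref{main1}. This yields a map $\phi_\eps=\phi_\eps(\delta,\xi)$ with $\max\{\|\phi_\eps\|,|\phi_\eps|_{p+1-\eps}\}=o(1)$, uniformly for $\delta=\eps^{1/n}d$ with $d$ in a compact subset of $(0,\infty)$ and $\xi$ in a compact neighborhood of $\xi_0$ in $\Omega$.

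Setting $E_\eps(u):=\tfrac12\|u\|^2-\tfrac{1}{p+1-\eps}\int_{\rn}Q_\Omega|u|^{p+1-\eps}\d x$ and $J_\eps(\delta,\xi):=E_\eps(U_{\delta,\xi}+\phi_\eps(\delta,\xi))$, a standard by-product of the reduction is that critical points of $J_\eps$ correspond to solutions of \eqref{eq:p}. The expansion needed for Theorem \ref{main1} produces, in $C^1$ uniformly on compact subsets after the change of variables $\delta=\eps^{1/n}d$,
\begin{align*}
J_\eps(\eps^{1/n}d,\xi)=A_0+\eps\,\Phi(d,\xi)+o(\eps),\qquad \Phi(d,\xi):=A_1-A_2\log d+A_3\,d^n\psi_\Omega(\xi),
\end{align*}
for explicit positive constants $A_0,A_1,A_2,A_3$. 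The $d^n\psi_\Omega(\xi)$ term is produced by the ``bad'' contribution $\int_{\rn\setminus\Omega}U_{\delta,\xi}^{p+1-\eps}\d x\sim\alpha_n^{p+1}\delta^n\psi_\Omega(\xi)$, obtained from the far-field decay of $U_{\delta,\xi}$.

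Solving $\partial_d\Phi=0$ for $\xi$ near $\xi_0$ yields $d(\xi):=\bigl(A_2/(nA_3\psi_\Omega(\xi))\bigr)^{1/n}$; at $(d_0,\xi_0):=(d(\xi_0),\xi_0)$ the mixed derivative $\partial_d\partial_\xi\Phi=A_3 n d^{n-1}\nabla\psi_\Omega(\xi)$ vanishes because $\nabla\psi_\Omega(\xi_0)=0$, so the Hessian of $\Phi$ at $(d_0,\xi_0)$ is block diagonal with a positive $(d,d)$-block and $\xi$-block equal to $A_3\,d_0^n\,D^2\psi_\Omega(\xi_0)$, which is invertible by hypothesis. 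Thus $(d_0,\xi_0)$ is a non-degenerate critical point of $\Phi$, and by the stability of the local Brouwer degree of $\nabla\Phi$ at $(d_0,\xi_0)$ (which equals $\pm 1$), the system $\nabla_{(d,\xi)}J_\eps(\eps^{1/n}d,\xi)=0$ admits, for all sufficiently small $\eps>0$, a solution $(d_\eps,\xi_\eps)\to(d_0,\xi_0)$. Setting $\delta(\eps):=\eps^{1/n}d_\eps$ and $\xi(\eps):=\xi_\eps$, the function $u_\eps=U_{\delta(\eps),\xi(\eps)}+\phi_\eps(\delta(\eps),\xi(\eps))$ meets \eqref{ueps}--\eqref{ueps2}; positivity follows, as in Theorem \ref{main1}, from the fact that $\phi_\eps$ is too small in the relevant norms to overturn the sign of $U_{\delta(\eps),\xi(\eps)}$, combined with a maximum-principle argument applied to the positive part of $u_\eps$.

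The main obstacle is obtaining the $C^1$ expansion of $J_\eps$ to order $o(\eps)$ uniformly in $(d,\xi)$; once this is in hand, the non-degeneracy of $\xi_0$ makes the final perturbative step automatic. The non-routine ingredient specific to this problem is the computation of $\int_{\rn\setminus\Omega}U_{\delta,\xi}^{p+1-\eps}\d x$ together with its $(\delta,\xi)$-derivatives, where the geometry of $\Omega$ enters through $\psi_\Omega$; the key point is that $\psi_\Omega$ is $C^\infty$ in the interior of $\Omega$, so differentiation under the integral sign on compact subsets of $\Omega$ is justified and yields the advertised structure of $\Phi$.
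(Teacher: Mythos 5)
Your reduction and the identification of the critical point of the reduced energy match the paper's proof: you build $\phi_\eps(\delta,\xi)$ as in Proposition \ref{rid1}, obtain the same expansion $\tilde J_\eps(\eps^{1/n}d,\xi)=\mathfrak a+\mathfrak b\eps+\mathfrak c\eps\ln\eps+\eps\Psi(d,\xi)+o(\eps)$ from Proposition \ref{red-en} with $\Psi(d,\xi)=\mathfrak c_1 d^n\psi_\Omega(\xi)-\mathfrak c_2\ln d$, and the block-diagonal Hessian computation at $(d_0,\xi_0)$ with $d_0^n=\mathfrak c_2/(n\mathfrak c_1\psi_\Omega(\xi_0))$ is exactly the content of the paper's one-line assertion that a nondegenerate critical point of $\psi_\Omega$ yields a nondegenerate critical point of $\Psi$. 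Using local degree stability of $\nabla\Psi$ is an acceptable substitute for ``nondegenerate critical points are stable under $\cC^1$-perturbations'' since, by Lemma \ref{iff:lem}, zeros of $\nabla\tilde J_\eps$ correspond to solutions.

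There is, however, a genuine gap in your positivity step. The available estimates control $\phi_\eps$ only in $D^{1,2}(\rn)$ and $L^{p+1-\eps}(\rn)$, and a small norm in these spaces does not give the pointwise control needed for ``$\phi_\eps$ is too small to overturn the sign of $U_{\delta(\eps),\xi(\eps)}$''; indeed $U_{\delta,\xi}$ decays to $0$ at infinity, where there is no reason $|\phi_\eps|$ should be dominated by it. The paper instead uses an energy argument (Lemma \ref{lem:positive solution}): since $J_\eps(u_\eps)\to\mathfrak a=\tfrac1nS^{n/2}$ and the ground-state level $c_q$ also tends to $\tfrac1nS^{n/2}$ as $q\to p$, for small $\eps$ one has $J_\eps(u_\eps)<2c_{p-\eps}$, so $u_\eps$ cannot change sign (a standard Nehari-manifold observation), after which the strong maximum principle upgrades $u_\eps\ge 0$ to $u_\eps>0$. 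You should replace your pointwise heuristic with this energy-level comparison; otherwise the structure of your argument is correct.
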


The assumption that the critical point $\xi_0$ must be non-degenerate is not too restrictive. We show in the appendix (see Theorem~\ref{mainap}) that for almost every small $\cC^2$-perturbation $\o_\theta$ of a given bounded open set $\Omega\subset \rn$, the function $\psi_{\Omega_\theta}$ is a \emph{Morse function}, that is, all of its critical points are nondegenerate. In other words, the determinant of the Hessian at every critical point of $\psi_{\Omega_\theta}$ is not zero.

Theorem~\ref{main11} implies that, in some domains, \eqref{eq:p} has multiple positive solutions.

\begin{ex-rem}
\emph{
\begin{itemize}
\item[$(a)$] If $\o=B(\xi,r)$ is the ball of radius $r$ and center $\xi$, then $\xi$ is a critical point of $\psi_\o$ and it is a global minimum.
\item[$(b)$] If $\o=\o_1\cup\cdots\cup\o_m$ with $\overline{\o}_i\cap\overline{\o}_j=\emptyset$ if $i\neq j$, then, since $\lim_{\xi\to\partial\o}\psi_\o(\xi)=\infty$, the function $\psi_\o$ has $m$ local minima $\xi_i\in\o_i$, $i=1,\ldots,m$.
\item[$(c)$] If $\o=B(\xi_1,r_1)\cup B(\xi_2,r_2)$, the balls are disjoint and $r_1>r_2$, then the global minimum of $\psi_\o$ is close to $\xi_1$.
\item[$(d)$] Let $\Omega$ be as in $(c)$ and let $L$ be a thin tube connecting the two balls.  Then $D:=\Omega \cup L$ is a dumbbell domain. Let $x_1$ be the global minimum of $\psi_\o$ (which is close to $\xi_1$) and let $x_2$ be the local minimum close to $\xi_2$. Then, there is $r\in(0,r_2)$ such that
\begin{align*}
 \inf_{x\in \partial B_r(x_2)}\psi_D(x) >\psi_D(x_2)\geq \psi_D(x_1).
\end{align*}
Hence, the mountain pass theorem applied to $\psi_D$ yields the existence of a critical saddle point $x^*\in L.$ In Figure~\ref{fig1} we include a picture of the function $\psi_D$ for $n=2$ (our results only consider $n\geq 3,$ but the shape of $\psi_D$ is analogous).  Notice the existence of a global minimum (in the large ball), a local minimum (in the small ball), and a saddle point (on the connecting bridge).
\begin{figure}[h!]
\centering
 \includegraphics[width=9cm]{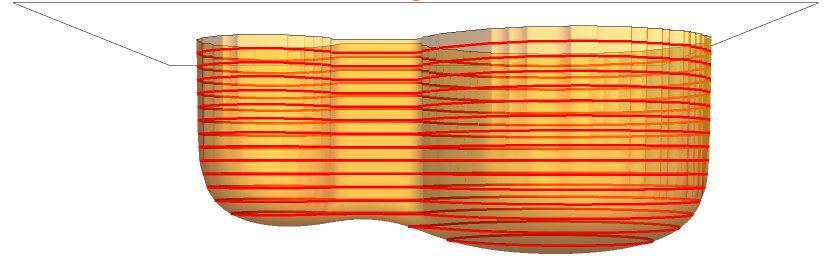}
 \includegraphics[width=7cm]{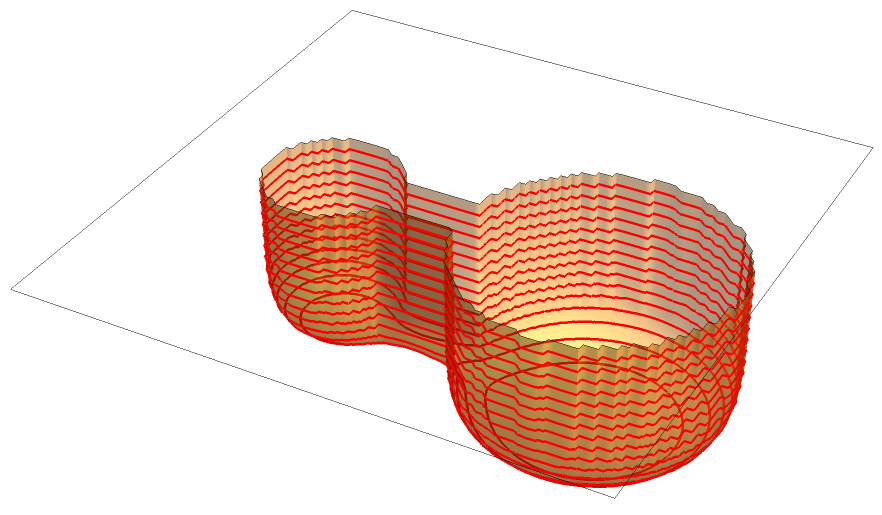}
 \caption{A (truncated) plot of the function $\psi_D$ for $n=2$. }\label{fig1}
\end{figure}
\item[$(e)$] $\psi_\o$ has at least $\mathrm{cat}(\o)$ critical points, where $\mathrm{cat}$ stands for the Lusternik-Schnirelmann  category. As mentioned above, they are generically nondegenerate with respect to $\o$. So, generically, problem \eqref{eq:p} has at least $\mathrm{cat}(\o)$ positive solutions.
\end{itemize}
}
\end{ex-rem}

Next we turn our attention to the existence of \emph{nodal solutions}. In this case, we use as blow-up profiles the difference of two bubbles concentrating near the boundary of $\o.$
 
\begin{theorem} \label{main2}
Let $\o$ be a smooth bounded open set. There exists $\eps_0>0$ such that, for each $\eps\in(0,\eps_0)$, the problem \eqref{eq:p} has a sign-changing solution $u_\eps$ of the form
\begin{align}\label{ueps:n}
 u_\eps = U_{\delta_1(\eps),\xi_1(\eps)}-U_{\delta_2(\eps),\xi_2(\eps)}+\phi_\eps,\quad \text{ where }\max\{\|\phi_\eps\|,|\phi_\eps|_{p+1-\eps}\}\to 0\text{ as }\eps\to 0.
\end{align}
Here, the blow-up rates and the concentration points $\xi_1(\eps),\xi_2(\eps)\in \o$ satisfy that
\begin{align}\label{ueps2:n}
\delta_i(\eps)\eps^{-\frac{1}{n-2}}\to d_i>0 \quad \text{ and }\quad
\xi_i(\eps)\to \eta_i\quad \text{as $\eps\to 0$, $i=1,2$,}
\end{align}
where $\eta_1,\eta_2\in\partial\Omega$ are such that
\begin{align*}
|\eta_1-\eta_2|=\max\{|x_1-x_2|: x_1,x_2\in\partial\o\}.
\end{align*}
\end{theorem}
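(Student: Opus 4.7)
The proof follows the Lyapunov-Schmidt scheme used for Theorems~\ref{main1} and~\ref{main11}, but with the two-bubble ansatz
\[V_{\boldsymbol\delta,\boldsymbol\xi}:=U_{\delta_1,\xi_1}-U_{\delta_2,\xi_2},\qquad \boldsymbol\delta=(\delta_1,\delta_2),\ \boldsymbol\xi=(\xi_1,\xi_2)\in\Omega\times\Omega,\]
in the regime $\delta_i\to0$, $|\xi_1-\xi_2|$ bounded below, and $\dist(\xi_i,\partial\Omega)\gg\delta_i$. First I would carry out the reduction: write $u=V_{\boldsymbol\delta,\boldsymbol\xi}+\phi$ with $\phi$ orthogonal in $D^{1,2}(\rn)$ to the $(2n+2)$-dimensional approximate kernel $K_{\boldsymbol\delta,\boldsymbol\xi}$ spanned by the $\delta$- and $\xi$-derivatives of the two bubbles, project the equation onto $K^\perp$, and solve it for $\phi=\phi_\eps(\boldsymbol\delta,\boldsymbol\xi)$ by a contraction mapping. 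Since the two bubbles are well separated, their kernels are almost orthogonal, so the linear invertibility modulo $K$ and the smallness of the residual $-\Delta V-Q_\Omega|V|^{p-1-\eps}V$ reduce to estimates completely analogous to the single-bubble case.

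Next, define $F_\eps(\boldsymbol\delta,\boldsymbol\xi):=J_\eps(V_{\boldsymbol\delta,\boldsymbol\xi}+\phi_\eps(\boldsymbol\delta,\boldsymbol\xi))$ and expand. Using the standard well-separated-bubble identities
\[\int|\nabla V|^2=2S^{n/2}-2\varepsilon_{12},\qquad \int|V|^{p+1}=2S^{n/2}-2(p+1)\varepsilon_{12}+o(\varepsilon_{12}),\]
where $\varepsilon_{12}:=\int U_{\delta_1,\xi_1}^pU_{\delta_2,\xi_2}\asymp(\delta_1\delta_2)^{(n-2)/2}/|\xi_1-\xi_2|^{n-2}$, together with the expansion $\int_{\Omega^c}U_{\delta_i,\xi_i}^{p+1}=\delta_i^n\psi_\Omega(\xi_i)+o(\delta_i^n)$ and the subcritical correction coming from $\delta_i^{(n-2)\eps/2}\int U^{p+1-\eps}$, one finds
\[F_\eps(\boldsymbol\delta,\boldsymbol\xi)=\tfrac{2}{n}S^{n/2}+\varepsilon_{12}+C_1\!\sum_i\delta_i^n\psi_\Omega(\xi_i)-C_2\,\eps\!\sum_i\log\delta_i+C_3\eps+\text{l.o.t.}\]
for suitable positive constants $C_1,C_2,C_3$. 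After the rescaling $\delta_i=d_i\eps^{1/(n-2)}$ the interaction $\varepsilon_{12}$ and the subcritical $\eps\log\delta_i$ terms both contribute at the leading $\eps$-order, whereas the sign-penalty $\delta_i^n\psi_\Omega(\xi_i)$ is of strictly lower order as long as $\dist(\xi_i,\partial\Omega)\gg\eps^{2/(n(n-2))}$. Optimizing in $(d_1,d_2)$ yields unique positive $d_i^\ast$; substituting back produces a leading-order functional that is strictly decreasing in $|\xi_1-\xi_2|$. The minimum is therefore driven towards the largest possible value of $|\xi_1-\xi_2|$ in $\overline\Omega\times\overline\Omega$, realized by a pair of boundary points $\eta_1,\eta_2\in\partial\Omega$ with $|\eta_1-\eta_2|=\max\{|x_1-x_2|:x_1,x_2\in\partial\Omega\}$.

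To make this heuristic rigorous I would restrict the parameters to a compact region of the form $\{\delta_i\in[a,b]\,\eps^{1/(n-2)},\ \dist(\xi_i,\partial\Omega)\in[\eps^{\beta_1},\eps^{\beta_2}]\}$ for appropriately chosen $\beta_1>\beta_2>0$, show that $F_\eps$ attains its infimum in the interior of this region for $\eps$ small, and deduce a critical point of $F_\eps$ giving a solution $u_\eps$ with the asymptotics~\eqref{ueps:n}--\eqref{ueps2:n}. The principal obstacle is precisely this uniformity issue: the expansion $\int_{\Omega^c}U_{\delta,\xi}^{p+1}\approx\delta^n\psi_\Omega(\xi)$ breaks down when $\dist(\xi,\partial\Omega)\lesssim\delta$, and must be replaced by a half-space-type expansion there. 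Hence both the contraction estimate producing $\phi_\eps$ and the expansion of $F_\eps$ must be established with explicit quantitative dependence on $\dist(\xi_i,\partial\Omega)$, and a careful compactness argument is required to guarantee that the minimizer stays in the interior of the admissible region and that its limit points are exactly the maximally separated boundary pair.
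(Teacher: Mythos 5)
Your framework is the right one—the same two-bubble ansatz, Lyapunov--Schmidt reduction with orthogonality to the $(2n+2)$-dimensional approximate kernel, and reduction to a finite-dimensional variational problem—but the critical quantitative step that makes the reduced problem tractable is missing, and you yourself flag it without resolving it. The difficulty is not merely technical: after you minimize the leading-order reduced energy
\[
\mathfrak c_1\,\frac{(d_1d_2)^{(n-2)/2}}{|\xi_1-\xi_2|^{n-2}}-\mathfrak c_2\ln(d_1d_2)
\]
in $(d_1,d_2)$, the result is \emph{monotone decreasing} in $|\xi_1-\xi_2|$ for any admissible configuration, so a naive minimization over a box $\{\dist(\xi_i,\partial\Omega)\in[\eps^{\beta_1},\eps^{\beta_2}]\}$ pushes the minimizer to the face $\dist(\xi_i,\partial\Omega)=\eps^{\beta_1}$ of the admissible region, not its interior—unless $\eps^{\beta_1}$ is tuned to the scale at which the sign-penalty arrests the flow. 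Moreover, the leading order is degenerate in the ratio $d_1/d_2$ and in the normal coordinates of the concentration points, so the leading order alone cannot give a stable critical point.

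What the paper does, and what your proposal omits, is to write $\xi_j=\eta_j+\tau_j\nu_j$ with $\eta_j\in\partial\Omega$ and $\tau_j=t_j\tau$, and to \emph{determine} the scale $\tau$ by balancing the normal-derivative of the interaction against the half-space penalty. With $\delta_j\sim\eps^{1/(n-2)}$, moving $\xi_j$ inward by $\tau$ changes the interaction $\varepsilon_{12}\sim\eps/|\xi_1-\xi_2|^{n-2}$ by $O(\eps\,\tau)$, while the penalty (computed by the half-space expansion, not via $\psi_\Omega$ at a fixed interior point) scales as $(\delta/\tau)^n=\eps^{n/(n-2)}\tau^{-n}$. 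Equating $\eps\,\tau\asymp\eps^{n/(n-2)}\tau^{-n}$ gives $\tau=\eps^{2/((n-2)(n+1))}$, and this is precisely the paper's choice. With this scale one gets a two-level expansion of the reduced energy: a leading $O(\eps)$ piece $\Xi(d_1,d_2,\eta_1,\eta_2)$, which selects $\eta_1,\eta_2$ as a maximally separated pair on $\partial\Omega$ and the product $d_1d_2$, plus a next-order $O\bigl(\eps^{1+2/((n-2)(n+1))}\bigr)$ piece $\Upsilon(d_1,d_2,t_1,t_2,\eta_1,\eta_2)$, which breaks the degeneracy in $t_1,t_2$ and in $d_1/d_2$. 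Only after establishing this two-level structure can one perform the nested minimization and conclude stability of the critical point. You need to supply the half-space estimate (the analogue of the paper's Lemma~\ref{half:lemma}), identify $\tau$, and carry out the two-order expansion; otherwise the argument as you wrote it does not produce an interior critical point of the reduced functional.
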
 

Least energy sign-changing solutions for \eqref{eq:problem} for any $q\in(1,p)$ were shown to exist in \cite{CHS24}. We do not know if the nodal solutions in Theorem~\ref{main2} have least energy or not, but their energy is close to the least possible energy for nodal solutions.

Note that the concentration points in Theorem~\ref{main2} are ``almost as far as possible'' from each other, while still being slightly far away from the ``repellent zone'' $\rn\backslash\o$, where the coefficient $Q$ is negative.  A much more detailed information on the behavior of the concentration points is available in Section~\ref{sch:sec}.

\medskip

For our last result, we assume that $0\in\o$ and we consider the \emph{critical problem}
\begin{equation}\label{eq:p_critical}
\begin{cases}
-\Delta u= Q_{\o_{\vr}} |u|^{p-1}u,\\
u\in D^{1,2}(\mathbb R^n),
\end{cases}
\end{equation}
where $\o_\vr:=\{x\in\o:|x|>\vr\}$, $\vr>0$ is a small parameter, and $Q_{\o_\vr}:={\mathds 1}_{\o_\vr}-{\mathds 1}_{\mathbb R^n\setminus{\o_\vr}}$.  This means that the set $\o$ has a small hole around the origin (and therefore, $\o$ has a nontrivial topology). In this setting, we show the existence of a positive solution to \eqref{eq:p_critical} concentrating close to the center of the small hole, whenever its radius is sufficiently small.

\begin{theorem}\label{main3}
There exists $\vr_0>0$ such that, for each $\vr\in(0,\vr_0)$, the problem \eqref{eq:p_critical} has a positive solution $u_\vr$ of the form
\begin{align*}
 u_\eps=U_{\delta(\vr),\xi(\vr)}+\phi_\vr,\quad \text{ where }\|\phi_\vr\|\to 0\text{ as }\vr\to 0.
\end{align*}
Here, the blow-up rate and the concentration point satisfy that
\begin{align*}
\delta(\vr)\vr^{-\frac{1}{2}}\to d>0 \quad \text{ and }\quad
\xi(\vr)\to 0\quad \text{as $\vr\to 0$,}
\end{align*}
\end{theorem}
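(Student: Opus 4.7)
My plan is to run the Lyapunov--Schmidt reduction developed in the paper for Theorems~\ref{main1}--\ref{main2}, with the blow-up now driven by the hole radius $\vr$ rather than by a subcritical parameter $\eps$. I seek $u=U_{\delta,\xi}+\phi$ with $\phi$ orthogonal, in $D^{1,2}(\rn)$, to the $(n+1)$-dimensional kernel $K_{\delta,\xi}:=\mathrm{span}\{\partial_\delta U_{\delta,\xi},\partial_{\xi_j}U_{\delta,\xi}:j=1,\dots,n\}$ of the linearization $-\Delta-pU_{\delta,\xi}^{p-1}$ of \eqref{eq:p_critical} at the bubble. The source term of the projected equation, $R_\vr=2\mathds{1}_{\rn\setminus\o_\vr}U_{\delta,\xi}^p$, is supported on the ``bad'' set $\rn\setminus\o_\vr=(\rn\setminus\o)\cup\overline{B_\vr(0)}$. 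Since $Q_{\o_\vr}\in L^\infty(\rn)$, the linear theory on $K_{\delta,\xi}^\perp$ is identical to the one used in the subcritical case, and a contraction argument yields a unique $\phi=\phi_{\delta,\xi,\vr}\in K_{\delta,\xi}^\perp$ with $\|\phi\|\lesssim \|R_\vr\|_\ast$, uniformly on the compact parameter sets considered below.

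I then form $F_\vr(\delta,\xi):=J_\vr(U_{\delta,\xi}+\phi_{\delta,\xi,\vr})$, whose critical points correspond to solutions of \eqref{eq:p_critical}. Using the identities $\|U_{\delta,\xi}\|^2=|U_{\delta,\xi}|_{p+1}^{p+1}=:A_0$, I obtain
\[
F_\vr(\delta,\xi)=\tfrac{1}{n}A_0+\tfrac{2}{p+1}\int_{\rn\setminus\o_\vr}U_{\delta,\xi}^{p+1}\,\d x+o\!\left(\vr^{n/2}\right),
\]
the $o(\vr^{n/2})$ coming from the $\phi$-correction. As in the proof of Theorem~\ref{main1}, the outside-$\o$ piece equals $\alpha_n^{p+1}\delta^n\psi_\o(\xi)(1+o(1))$; for the hole piece, the change of variable $y=(x-\xi)/\delta$ yields $\int_{B_{\vr/\delta}(-\xi/\delta)}U^{p+1}(y)\,\d y$. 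Introducing the scaling $\delta=d\sqrt\vr$, $\xi=\sqrt\vr\,\sigma$ and using $U(y)=\alpha_n(1+|y|^2)^{-(n-2)/2}$, both pieces are of order $\vr^{n/2}$ and one finds
\[
F_\vr(\delta,\xi)=\tfrac{1}{n}A_0+\tfrac{2\alpha_n^{p+1}}{p+1}\,\vr^{n/2}\,\Phi(d,\sigma)+o(\vr^{n/2}),\qquad \Phi(d,\sigma):=d^n\psi_\o(0)+\frac{V_n\,d^n}{(d^2+|\sigma|^2)^n},
\]
where $V_n=|B_1(0)|$.

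A direct computation shows that $\Phi$ has a unique critical point $(d^\ast,0)$ with $d^\ast=(V_n/\psi_\o(0))^{1/(2n)}$; its Hessian there is block-diagonal, strictly positive in the $d$-direction and negative definite in the $\sigma$-directions, so it is a non-degenerate saddle. The standard persistence of non-degenerate critical points under $C^1$-small perturbations, applied to the rescaled functional $\vr^{-n/2}(F_\vr-A_0/n)$ on a compact set in $(d,\sigma)$, then produces a critical point $(\delta(\vr),\xi(\vr))$ of $F_\vr$ with $\delta(\vr)/\sqrt\vr\to d^\ast$ and $\xi(\vr)\to 0$. Positivity of $u_\vr$ follows as usual from the smallness of $\phi_\vr$ in a suitable weighted norm together with the positivity of $U_{\delta,\xi}$. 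The main technical obstacle I foresee is the uniform $C^1$-control, in the scaled variables, of both the remainder in the hole-integral expansion (whose $\partial_\xi$-derivative carries an extra factor $\delta^{-1}\sim\vr^{-1/2}$) and of the $\phi$-contribution; showing that these remain negligible with respect to $\nabla\Phi$ in the scaled variables is the decisive estimate.
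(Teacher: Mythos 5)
Your plan reproduces the scheme of Section~\ref{sec:critical} almost exactly: the same ansatz $U_{\delta,\xi}+\phi$ with $\delta=d\sqrt{\vr}$, the same projected contraction argument (setting $\eps=0$ in the estimates of Section~\ref{sec:positive solution}), and the same reduced-energy expansion, with your parametrization $\xi=\sqrt{\vr}\,\sigma$ related to the paper's $\xi=\delta\zeta$ by the diffeomorphism $\sigma=d\zeta$; after this change of variables your $\Phi(d,\sigma)$ is identical to the paper's $\Phi(d,\zeta)=\mathfrak b_1 d^n+\mathfrak b_2\,d^{-n}(1+|\zeta|^2)^{-n}$, with $\mathfrak b_1\propto\psi_\o(0)$ and $\mathfrak b_2\propto|B_1|$, and the unique non-degenerate saddle you find agrees with the paper's $(d_0,0)$. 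The stability of the saddle under $\cC^1$-perturbations and the $\cC^1$-uniformity of the expansion (which the paper relegates to ``a standard argument'') close the reduction in the same way.

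The one genuine gap is the positivity step. Writing ``positivity of $u_\vr$ follows as usual from the smallness of $\phi_\vr$ in a suitable weighted norm together with the positivity of $U_{\delta,\xi}$'' is not an argument here: the reduction scheme only gives $\|\phi_\vr\|_{D^{1,2}}\to 0$, which controls neither $\phi_\vr$ pointwise nor its sign in the region $\rn\setminus\o_\vr$ where $Q_{\o_\vr}<0$; obtaining a weighted $L^\infty$ bound would require a separate barrier/comparison analysis that your proposal does not supply. The paper instead observes that $J_\vr(u_\vr)\to\mathfrak a=\tfrac1n S^{n/2}$, that the Nehari level of the $Q_{\o_\vr}$-problem is bounded below by $\tfrac1n S^{n/2}$ uniformly in $\vr$, and therefore $J_\vr(u_\vr)<2c_{p}$ for small $\vr$; Lemma~\ref{lem:positive solution}$(ii)$ (a nodal-domain/Nehari argument plus the strong maximum principle applied to $-\Delta u+{\mathds 1}_{\rn\setminus\o_\vr}u^p={\mathds 1}_{\o_\vr}u^p$) then forces $u_\vr$ to have one sign and to be strictly positive. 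You should replace your positivity remark by this energy-level argument, or else prove a genuine pointwise estimate $|\phi_\vr|\le o(1)\,U_{\delta,\xi}$ on all of $\rn$, which is substantially more work than the $D^{1,2}$-contraction you set up.
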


We stress that the critical problem \eqref{eq:problem} with $q=\frac{n+2}{n-2}$ does not have a least energy solution. Furthermore, it does not have a nontrivial solution when $\o$ is strictly starshaped. These and other results may be found in \cite{cfs}. So the solutions given by Theorem \ref{main3} are not least energy solutions, but their energy approaches the least possible energy as the hole shrinks.

The proof of our theorems follows the Lyapunov-Schmidt reduction procedure, which has been successfully applied to a wide variety of elliptic problems, see for instance \cite{ackclpi,blp,gmp,bamipi,mupi,mp,mupi1,mupi2,PST23,cpps} for some of the papers that inspired our proofs and where similar estimates can be found for other problems.

In the elliptic setting, this technique is well known and it has proved to be a powerful tool to construct solutions to critical or almost critical problems. But the method is also known for its technical difficulty and its need for long and sharp calculations.  Due to this high degree of complexity, the experts on the method often omit some technical details that are known to be true. This has the advantage of making the arguments shorter and more readable, but, on the other hand, it also makes the machinery more obscure for nonexperts.  Luckily, when applying this technique to \eqref{eq:problem}, one has some advantages: first, since \eqref{eq:problem} has no boundary conditions, the use of Green or Robin functions\textemdash which are often involved in delicate expansions\textemdash is not needed. Moreover, the differential operator (the Laplacian) and the nonlinearity (of power type) are rather simple and this eases the calculations.  Hence, one of the contributions that we aim for in this paper, is to give our own perspective on some key ingredients in the method that are often left aside, such as the pivotal role of rescalings as changes of coordinates between the linearizations at the bubbles and the detailed application of the implicit function theorem and the Banach fixed point theorem, which are crucial. We pay special attention to the proofs of Theorems~\ref{main1} and~\ref{main11} (see Proposition \ref{rid1}), where we tried to be as clear and simple as possible and to show, in a transparent way, the intuition behind the calculations.  Then the proofs of Theorems~\ref{main2} and~\ref{main3} are shorter by referring to their counterparts in the previous sections.

We hope that this presentation can be of help to researchers that are not very familiar with the Lyapunov-Schmidt reduction method and wish to understand it better.  

We close this introduction with an open problem: 

\begin{question}
If $\o$ is a convex set, is it true that $\psi_\o$ has a unique critical point?
\end{question}
We believe that this is indeed true.

\medskip

The paper is organized as follows.  In Section~\ref{sec:preliminaries} we present some preliminaries, including some standard notation and tools to develop the method. Theorems~\ref{main1} and~\ref{main11} are proved in Section~\ref{sec:positive solution}, where positive solutions for the subcritical problem are studied. Then, nodal solutions are investigated in Section~\ref{sch:sec}, where Theorem~\ref{main2} is proved. Section~\ref{sec:critical} is devoted to the critical problem \eqref{eq:p_critical} and to the proof of Theorem~\ref{main3}.  Finally, in the appendix, we include some auxiliary computations and we show that the function $\psi_\Omega$ given in \eqref{psio} is generically a Morse function.

\section{Preliminaries}
\label{sec:preliminaries}

\subsection{A decomposition of the space}

We begin by fixing some standard notation. The space
\begin{align}\label{Ddef}
D^{1,2}(\rn):=\{u\in L^{2^*}(\rn):\nabla u\in L^2(\rn,\rn)\}
\end{align}
is a Hilbert space with its usual inner product and norm
 \begin{align*}
 \langle u,v\rangle:=\irn\nabla u\cdot\nabla v,\qquad \|u\|:=\left(\irn |\nabla u|^2\right)^{1/2}.
 \end{align*}
We write
 \begin{align*}
  |u|_q:=\left( \int_{\rn}|u|^q \right)^\frac{1}{q}\qquad \text{ for \ }u\in L^q(\rn),\qquad q\geq 1.
 \end{align*}
 
Fix $\frac{n}{n-2}<s\leq 2^*$ and define 
$$E:=D^{1,2}(\rn)\cap L^{s}(\rn),$$ 
endowed with the norm $\|u\|_E:=\max\{\|u\|,|u|_{s}\}.$ Set $p:=2^*-1=\frac{n+2}{n-2}$. Recall that the positive solutions to \eqref{eq:yamabe} are the bubbles $U_{\delta,\xi}$ given in \eqref{bub}.  Note that, as $s>\frac{n}{n-2}$, the functions $U_{\delta,\xi}$ belong to $E$. Moreover,
$$\partial_{\delta} U_{\delta,\xi}(x)=\alpha_n\frac{n-2}{2}\delta^\frac{n-4}{2}\frac{|x-\xi|^2-\delta^2}{(\delta^2+|x-\xi|^2)^{n/2}},$$
and
$$\partial_{\xi_i} U_{\delta,\xi}(x)=\alpha_n(n-2)\delta^\frac{n-2}{2}\frac{x_i-\xi_i}{(\delta^2+|x-\xi|^2)^{n/2}},\qquad i=1,\ldots,n,$$
generate the space of solutions\textemdash denoted $K_{\delta,\xi}$\textemdash to the linear problem
\begin{equation}\label{eq:linear equation}
-\Delta z= p U_{\delta,\xi}^{p-1}z,\qquad z\in D^{1,2}(\mathbb R^n).
\end{equation}
For each $\delta>0$ and $\xi\in\rn,$ the rescaling function $P_{\delta,\xi}:D^{1,2}(\rn)\to D^{1,2}(\rn)$ given by
\begin{equation} \label{eq:rescaling}
P_{\delta,\xi}u(x):=\delta^{-\frac{n-2}{2}}u\Big(\frac{x-\xi}{\delta}\Big)
\end{equation}
is a linear isometry, i.e.,
$$\|P_{\delta,\xi}u\|=\|u\|\qquad\text{for all \ } u\in D^{1,2}(\rn),$$
and satisfies
\begin{equation} \label{eq:s_rescaling}
|P_{\delta,\xi}u|_s= \delta^\alpha |u|_s\qquad\text{for all \ } u\in E,\qquad\text{where \ }\alpha:=\frac{n}{s}-\frac{n-2}{2}.
\end{equation}
Note that $\alpha>0$ if $s<2^*$. Set
$$Z_{1,0}^0(x):=\alpha_n\frac{n-2}{2}\frac{|x|^2-1}{(1+|x|^2)^{n/2}}\qquad\text{and}\qquad Z_{1,0}^i(x):=\alpha_n(n-2)\frac{x_i}{(1+|x|^2)^{n/2}},\qquad i=1,\ldots,n,$$
and define
\begin{align}\label{z:def}
Z_{\delta,\xi}^i:=P_{\delta,\xi}Z_{1,0}^i,\qquad i=0,1,\ldots,n.
\end{align}
Observe that $Z_{\delta,\xi}^0=\delta\partial_\delta U_{\delta,\xi}$ and $Z_{\delta,\xi}^i=\delta\partial_{\xi_i} U_{\delta,\xi}$ for $i=1,\ldots,n$. Therefore,
\begin{equation}\label{k}
K_{\delta,\xi}=\mathrm{span}\{Z_{\delta,\xi}^{i}: i=0,1,\dots,n\}=P_{\delta,\xi}K_{1,0}.
\end{equation}
Consider its orthogonal complement in $D^{1,2}(\rn)$,
\begin{equation}\label{ko}
K_{\delta,\xi}^\perp:=\left\{u\in D^{1,2}(\mathbb R^n):\langle Z_{\delta,\xi}^{i},u\rangle=0,\ i=0,1,\dots,n\right\},
\end{equation}
and the orthogonal projections in $D^{1,2}(\rn)$,
\begin{align*}
\Pi_{\delta,\xi}:D^{1,2}(\mathbb R^n)\to K_{\delta,\xi}\quad\text{ and }\quad
\Pi_{\delta,\xi}^\perp:D^{1,2}(\mathbb R^n)\to K_{\delta,\xi}^\perp.
\end{align*}
Since $P_{\delta,\xi}$ is an isometry of $D^{1,2}(\rn)$ we have that 
\begin{equation} \label{eq:proyecciones}
K_{\delta,\xi}^\perp=P_{\delta,\xi}K_{1,0}^\perp,\qquad \Pi_{\delta,\xi}=P_{\delta,\xi}\Pi_{1,0}P^{-1}_{\delta,\xi},\qquad\text{and}\qquad\Pi_{\delta,\xi}^\perp=P_{\delta,\xi}\Pi_{1,0}^\perp P^{-1}_{\delta,\xi}.
\end{equation}

The following lemma establishes the (uniform) continuity of the projections.
\begin{lemma} \label{lem:projections}
\hspace{2em}
\begin{itemize}
\item[$(i)$]$K_{\delta,\xi}\subset E$.
\item[$(ii)$]If $u\in E$, then $\Pi_{\delta,\xi}^\perp u\in K_{\delta,\xi}^\perp\cap L^s(\rn)\subset E$.
\item[$(iii)$]The restriction to $E$ of the orthogonal projections,
\begin{align*}
\Pi_{\delta,\xi}:E\to K_{\delta,\xi}\quad\text{ and }\quad
\Pi_{\delta,\xi}^\perp:E\to K_{\delta,\xi}^\perp\cap L^s(\rn),
\end{align*}
are continuous as functions from $E$ to $E$. Furthermore, there exist positive constants $C_1,C_2$ such that
$$\|\Pi_{\delta,\xi}u\|_E\leq C_1 \|u\|_E\quad\text{ and }\quad \|\Pi_{\delta,\xi}^\perp u\|_E\leq C_2 \|u\|_E\qquad\text{ for all }u\in E, \ \delta\in(0,1),\text{ and }\xi\in\rn.$$
\end{itemize}
\end{lemma}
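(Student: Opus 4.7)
The plan is to leverage the identity $\Pi_{\delta,\xi}=P_{\delta,\xi}\Pi_{1,0}P^{-1}_{\delta,\xi}$ from \eqref{eq:proyecciones} combined with the two distinct scaling behaviors of $P_{\delta,\xi}$: it is an isometry on $D^{1,2}(\rn)$ but it scales the $L^s$--norm by $\delta^\alpha$, and we crucially have $\alpha\geq 0$ since $s\leq 2^*$.

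For part $(i)$, I would simply inspect the explicit formulas for $Z_{1,0}^0$ and $Z_{1,0}^i$: at infinity $|Z^0_{1,0}(x)|\lesssim |x|^{-n}$ and $|Z^i_{1,0}(x)|\lesssim |x|^{-(n-1)}$, both smooth and bounded near the origin. So $Z_{1,0}^i\in L^s(\rn)$ provided $s>\tfrac{n}{n-1}$, which is automatic from $s>\tfrac{n}{n-2}$. Then $Z_{\delta,\xi}^i=P_{\delta,\xi}Z_{1,0}^i\in L^s(\rn)$ by \eqref{eq:s_rescaling}, so $K_{\delta,\xi}\subset E$. Part $(ii)$ is then immediate once $(iii)$ is in hand: since $u\in E\subset D^{1,2}(\rn)$, the orthogonal projection $\Pi^\perp_{\delta,\xi}u=u-\Pi_{\delta,\xi}u$ lies in $K_{\delta,\xi}^\perp\cap L^s(\rn)$ because both $u$ and $\Pi_{\delta,\xi}u$ (by $(i)$) are in $L^s(\rn)$.

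The substance is in $(iii)$. For the $D^{1,2}$--bound this is trivial: $\|\Pi_{\delta,\xi}u\|=\|\Pi_{1,0}P^{-1}_{\delta,\xi}u\|\leq \|P^{-1}_{\delta,\xi}u\|=\|u\|$, and similarly for $\Pi_{\delta,\xi}^\perp$. For the $L^s$--bound I would argue
\[
|\Pi_{\delta,\xi}u|_s=|P_{\delta,\xi}\Pi_{1,0}P^{-1}_{\delta,\xi}u|_s=\delta^\alpha|\Pi_{1,0}P^{-1}_{\delta,\xi}u|_s\leq C\delta^\alpha\|P^{-1}_{\delta,\xi}u\|=C\delta^\alpha\|u\|,
\]
where the bound $|\Pi_{1,0}v|_s\leq C\|v\|$ comes from the fact that $\Pi_{1,0}$ is a linear map from $D^{1,2}(\rn)$ into the \emph{finite-dimensional} subspace $K_{1,0}$, which by $(i)$ is contained in $L^s(\rn)$ (any two norms on a finite-dimensional space are equivalent; concretely, expand in the orthogonal basis $\{Z_{1,0}^i\}$ and estimate each coefficient by Cauchy-Schwarz in $D^{1,2}$). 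Since $\alpha\geq 0$ and $\delta\in(0,1)$, one has $\delta^\alpha\leq 1$, giving $|\Pi_{\delta,\xi}u|_s\leq C\|u\|\leq C\|u\|_E$. The bound for $\Pi_{\delta,\xi}^\perp$ follows by the triangle inequality.

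The only conceptual obstacle, and the point I think deserves emphasis, is that one must not try to run the argument ``symmetrically'' in $L^s$: the operator $P^{-1}_{\delta,\xi}$ scales the $L^s$--norm by $\delta^{-\alpha}$, which blows up as $\delta\to 0$, so it does \emph{not} act boundedly on $E$ uniformly in $\delta$. The trick that makes the estimate work uniformly is that $\Pi_{1,0}$ maps all of $D^{1,2}(\rn)$ into the finite-dimensional space $K_{1,0}\subset L^s(\rn)$, so that an $L^s$--bound is obtained from merely a $D^{1,2}$--bound on the input; the final $P_{\delta,\xi}$ then only \emph{contracts} the $L^s$--norm (or leaves it invariant when $s=2^*$), precisely because $\delta\in(0,1)$ and $\alpha\geq 0$.
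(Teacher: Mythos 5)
Your approach to part $(iii)$ is precisely the one in the paper: conjugate by the isometry $P_{\delta,\xi}$, use finite-dimensionality of $K_{1,0}$ to control $|\cdot|_s$ by $\|\cdot\|$, and absorb the factor $\delta^\alpha\leq 1$. Your closing remark about why the argument cannot be run symmetrically in $L^s$ is exactly the right intuition.

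There is, however, a small but real error in your part $(i)$. You assert $|Z^0_{1,0}(x)|\lesssim |x|^{-n}$, but in fact
\begin{equation*}
Z^0_{1,0}(x)=\alpha_n\tfrac{n-2}{2}\,\frac{|x|^2-1}{(1+|x|^2)^{n/2}}\sim |x|^{2-n}=|x|^{-(n-2)}\qquad\text{as }|x|\to\infty,
\end{equation*}
so $Z^0_{1,0}\in L^s(\rn)$ requires $(n-2)s>n$, i.e.\ $s>\tfrac{n}{n-2}$. This is the \emph{binding} constraint, not a consequence of the weaker condition $s>\tfrac{n}{n-1}$ that $Z^i_{1,0}\sim|x|^{-(n-1)}$ demands. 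Your conclusion is still correct because the paper assumes precisely $s>\tfrac{n}{n-2}$, but as written your reasoning suggests the hypothesis has room to spare when it does not; the condition $s>\tfrac{n}{n-2}$ is sharp for membership of $K_{\delta,\xi}$ in $L^s(\rn)$.

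Everything else matches the paper's proof.
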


\begin{proof}
$(i):$ \ Since $s>n/(n-2)$, a direct calculation shows that $Z_{\delta,\xi}^{i}\in L^s(\rn)$ for all $i=0,1,\ldots,n$.

$(ii):$ \ As $\Pi_{\delta,\xi}^\perp u=u-\Pi_{\delta,\xi} u$, this statement follows from $(i)$.

$(iii):$ \ Since $K_{1,0}$ is finite dimensional there exists $C_1\geq 1$ such that $|v|_s\leq C_1\|v\|$ for every $v\in K_{1,0}$. Then, by \eqref{eq:proyecciones} and
\eqref{eq:s_rescaling},
\begin{align*}
 |\Pi_{\delta,\xi}u|_s
 =\delta^\alpha |\Pi_{1,0}P_{\delta,\xi}^{-1}u|_s
 \leq   C_1 \|\Pi_{1,0}P_{\delta,\xi}^{-1}u\|\leq C_1\|u\|\leq C_1\|u\|_E\qquad\text{for all \ }u\in E, \ \delta\in(0,1),\text{ and }\xi\in\rn.
\end{align*}
Clearly, $\|\Pi_{\delta,\xi}u\|\leq \|u\|\leq \|u\|_E$ for every $u\in E$. Therefore, there is $C_2>0$ such that
$$\|\Pi_{\delta,\xi}u\|_E\leq C_2\|u\|_E\qquad\text{for all \ }u\in E, \ \delta\in(0,1),\text{ and }\xi\in\rn.$$
Finally, as $\Pi_{\delta,\xi}^\perp u=u-\Pi_{\delta,\xi} u$, there exists $C_3$ such that $\|\Pi_{\delta,\xi}^\perp u\|_E\leq C_3 \|u\|_E$ for every $u\in E$, $\delta\in(0,1)$, and $\xi\in\rn$, as claimed.
\end{proof}

\subsection{The adjoint problem}

Let
\begin{align*}
i^*:L^{\frac{2n}{n+2}}(\mathbb R^n)\to D^{1,2}(\mathbb R^n)
\end{align*}
be the adjoint operator of the embedding $i:D^{1,2}(\mathbb R^n)\hookrightarrow L^\frac{2n}{n-2}(\mathbb R^n)$, i.e., if $g\in L^\frac{2n}{n+2}(\rn)$, then $u=i^*g$ if and only if $u$ is a solution of 
\begin{align}\label{ug}
-\Delta u=g,\qquad u\in D^{1,2}(\rn).
\end{align}
Note that $i^*$ can also be seen as the solution mapping of \eqref{ug}, namely, the convolution of $g$ with the fundamental solution.

Let $q\in[\frac{ns+2s}{2n},\frac{n+2}{n-2}]$ and consider the problem
\begin{equation}\label{eq:problem+}
\begin{cases}
-\Delta u= Q_\o |u|^{q-1}u,\\  
u\in E.
\end{cases}
\end{equation}
As $s\leq q\frac{2n}{n+2}\leq 2^*$, if $u\in E$, then, by interpolation, $u^q\in L^\frac{2n}{n+2}(\rn)$ and $i^*(Q_\o |u|^{q-1}u)$ is well defined. Thus, $u$ solves \eqref{eq:problem+} if and only if $u$ solves
\begin{equation} \label{eq:adjoint_problem}
\begin{cases}
u= i^*(Q_\o |u|^{q-1}u),\\  
u\in E.
\end{cases}
\end{equation}

\begin{lemma}\hspace{2em} \label{lem:holder}
\begin{itemize}
\item[$(i)$]If $g\in L^\frac{2n}{n+2}(\rn)\cap L^\frac{sn}{n+2s}(\rn)$, then $i^*g\in E$ and there is a constant $C=C(n,s)>0$ such that
$$\|i^*g\|_E\leq C\max\{|g|_\frac{2n}{n+2},|g|_\frac{sn}{n+2s}\}.$$
\item[$(ii)$]If $v\in L^\frac{n}{2}(\rn)$ and $u\in E$, then $i^*(vu)\in E$ and there is a constant $C=C(n,s)>0$ such that
$$\|i^*(vu)\|_E\leq C|v|_\frac{n}{2}\|u\|_E.$$
\end{itemize}
\end{lemma}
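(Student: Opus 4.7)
The plan is to treat (i) by estimating $\|i^*g\|$ and $|i^*g|_s$ separately, and then to derive (ii) as a direct consequence of (i) combined with Hölder's inequality.

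For the $D^{1,2}$-norm in (i), I would test the equation $-\Delta u = g$ for $u = i^*g$ against $u$ itself and apply the Sobolev embedding $D^{1,2}(\rn) \hookrightarrow L^{2^*}(\rn)$:
\[\|u\|^2 = \irn g\,u \le |g|_{\frac{2n}{n+2}}\, |u|_{2^*} \le S_n\, |g|_{\frac{2n}{n+2}}\|u\|,\]
which yields $\|u\| \le S_n |g|_{\frac{2n}{n+2}}$. For the $L^s$-bound, the starting point is the integral representation of $i^*g$ as a constant multiple of the Riesz potential of order $2$ of $g$, namely $u(x) = c_n \irn |x-y|^{-(n-2)} g(y)\,\d y$. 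The Hardy--Littlewood--Sobolev inequality with $\tfrac{1}{s} = \tfrac{1}{p} - \tfrac{2}{n}$, which forces $p = \tfrac{sn}{n+2s}$, then gives $|u|_s \le C(n,s)\,|g|_p$. Taking the maximum of the two bounds yields (i). Before invoking HLS I would verify its admissibility conditions $1 < p < n/2$: the upper bound is equivalent to $s > 0$, while the lower bound is equivalent to $s(n-2) > n$, i.e.\ $s > \tfrac{n}{n-2}$, which is precisely the assumption placed on $s$ at the beginning of the section.

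For (ii), with $g := vu$, Hölder's inequality applied with the exponent splittings
\[\tfrac{n+2}{2n} = \tfrac{2}{n} + \tfrac{n-2}{2n} \qquad \text{and} \qquad \tfrac{n+2s}{sn} = \tfrac{2}{n} + \tfrac{1}{s}\]
yields
\[|vu|_{\frac{2n}{n+2}} \le |v|_{\frac{n}{2}}\,|u|_{2^*} \qquad \text{and} \qquad |vu|_{\frac{sn}{n+2s}} \le |v|_{\frac{n}{2}}\,|u|_s.\]
Bounding $|u|_{2^*} \le S_n \|u\| \le S_n \|u\|_E$ via Sobolev and using $|u|_s \le \|u\|_E$ trivially, both right-hand sides are controlled by a constant multiple of $|v|_{\frac{n}{2}}\|u\|_E$. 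Applying part (i) to $g = vu$ then produces the claimed inequality.

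The only delicate point, and really the main obstacle, is the verification that the standing assumption $s > \tfrac{n}{n-2}$ places $p = \tfrac{sn}{n+2s}$ exactly inside the admissible range $1 < p < n/2$ of the Hardy--Littlewood--Sobolev inequality; all remaining steps reduce to routine Hölder and Sobolev bookkeeping.
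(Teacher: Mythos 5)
Your proof is correct and follows essentially the same route as the paper's: continuity of $i^*$ into $D^{1,2}$ (which you prove directly by testing $-\Delta u = g$ against $u$, where the paper simply invokes it) together with the Hardy--Littlewood--Sobolev inequality for the $L^s$ bound, then Hölder plus Sobolev for part (ii). The admissibility check you flag is the right one to make\textemdash $p=\tfrac{sn}{n+2s}>1$ is exactly $s>\tfrac{n}{n-2}$, the standing hypothesis on $s$\textemdash though the ``upper bound $p<n/2$'' is not really equivalent to $s>0$: the algebra reduces it to $0<n$, so it holds unconditionally; this is a harmless slip that does not affect the argument.
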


\begin{proof}
$(i):$ \ Since $i^*$ is continuous, there is a constant $C=C(n)>0$ such that $\|i^*g\|\leq C|g|_{\frac{2n}{n+2}}$. Furthermore, from the Hardy-Littlewood-Sobolev inequality we get that $i^*g\in L^s(\rn)$ and there is a constant $C=C(n,s)>0$ such that $|i^*g|_s\leq C|g|_{\frac{ns}{n+2s}}$,
see \cite[Remark 2.9]{mp}. The statement follows.

$(ii):$ \ Hölder's inequality yields $vu\in L^\frac{2n}{n+2}(\rn)\cap L^\frac{sn}{n+2s}(\rn)$ and, using also Sobolev's inequality,
$$|vu|_\frac{2n}{n+2}\leq |v|_\frac{n}{2}|u|_{2^*}\leq C|v|_\frac{n}{2}\|u\|_E\qquad\text{and}\qquad |vu|_\frac{sn}{n+2s}\leq |v|_\frac{n}{2}|u|_s\leq |v|_\frac{n}{2}\|u\|_E.$$
Now the statement follows from $(i)$.
\end{proof}

\subsection{Some useful inequalities}

For future reference, we quote the following well known inequalities, see \cite[Lemma 2.2]{l}.

\begin{lemma}\label{lem:li}
Given $r\geq 1$ there is a positive constant $C=C(r)$ such that for all $a,b\in\r$ with $a\geq0$,
\begin{equation*}
||a+b|^{r}-a^{r}|\leq
\begin{cases}
C\min\{|b|^{r},a^{r-1}|b|\} &\text{if \ }0<r<1,\\
C(|b|^{r}+a^{r-1}|b|) & \text{if \ }r\geq1,
\end{cases}
\end{equation*}
and
\begin{equation*}
||a+b|^{r}(a+b)-a^{r+1}-(r+1)a^rb|\leq
\begin{cases}
C\min\{|b|^{r+1},a^{r-1}b^2\} &\text{if \ }0\leq r<1,\\
C\max\{|b|^{r+1},a^{r-1}b^2\} & \text{if \ }r\geq1.
\end{cases}
\end{equation*}
\end{lemma}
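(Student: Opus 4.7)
The plan is to reduce both inequalities to one-variable statements by homogeneity. The case $a=0$ reduces to trivial inequalities between powers of $|b|$; assume $a>0$. The scaling $(a,b)\mapsto(\lambda a,\lambda b)$ with $\lambda>0$ multiplies the first inequality by $\lambda^{r}$ and the second by $\lambda^{r+1}$; dividing by $a^{r}$, respectively $a^{r+1}$, and setting $t:=b/a\in\mathbb{R}$, the inequalities reduce to
$$||1+t|^{r}-1|\leq C\begin{cases}\min\{|t|^{r},|t|\}, & 0<r<1,\\ |t|^{r}+|t|, & r\geq 1,\end{cases}$$
and
$$\bigl||1+t|^{r}(1+t)-1-(r+1)t\bigr|\leq C\begin{cases}\min\{|t|^{r+1},t^{2}\}, & 0\leq r<1,\\ \max\{|t|^{r+1},t^{2}\}, & r\geq 1,\end{cases}$$
with $C=C(r)$; it then suffices to prove these two scalar inequalities.

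For the first inequality I would split at $|t|=1/2$. On $|t|\leq 1/2$, the mean value theorem applied to $s\mapsto|1+s|^{r}$ (whose derivative is uniformly bounded on $[-1/2,1/2]$) yields $||1+t|^{r}-1|\leq C|t|$. On $|t|\geq 1/2$, the triangle inequality gives $||1+t|^{r}-1|\leq|1+t|^{r}+1\leq C|t|^{r}$. Together these produce the bound $C(|t|^{r}+|t|)$, which is the $r\geq 1$ case. For $0<r<1$, the elementary interpolation $|t|\leq|t|^{r}$ on $|t|\leq 1$ and the reverse on $|t|\geq 1$ identifies the smaller of the two terms in each regime, yielding the claimed minimum. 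The second inequality is a first-order Taylor remainder for $\phi(s):=|1+s|^{r}(1+s)$, which satisfies $\phi(0)=1$, $\phi'(0)=r+1$, and $\phi''(s)=r(r+1)|1+s|^{r-1}\mathrm{sgn}(1+s)$. On $|t|\leq 1/2$, $|\phi''|$ is uniformly bounded, so Taylor's theorem with integral remainder gives an estimate of order $t^{2}$. On $|t|\geq 1/2$, the term-by-term triangle inequality gives $|\phi(t)-1-(r+1)t|\leq|1+t|^{r+1}+1+(r+1)|t|\leq C|t|^{r+1}$. Choosing the min or max branch then matches which of $t^{2}$ or $|t|^{r+1}$ is binding in each regime.

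The main obstacle is mostly book-keeping: for $0<r<1$ the stated upper bound is a \emph{minimum}, so one must verify that the bound produced by each regime of $|t|$ coincides, up to $C(r)$, with the smaller of the two competing expressions in that regime. This is the elementary interpolation mentioned above. A secondary subtlety, invisible in the statement, is the possible singular behavior at $t=-1$ (i.e., $a+b=0$) when $r<1$: the mean-value argument is only used on $|t|\leq 1/2$, where $s\in[-1/2,1/2]$ stays away from $-1$, while on the complementary region the triangle-inequality bounds are insensitive to the cancellation $|1+t|\to 0$, so no Hölder-type refinement near $t=-1$ is needed. Beyond this, the only ingredients are the mean value theorem, Taylor's theorem with integral remainder, and the subadditivity $|x+y|^{\alpha}\leq|x|^{\alpha}+|y|^{\alpha}$ for $\alpha\in(0,1]$.
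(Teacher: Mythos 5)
The paper does not prove this lemma; it is quoted directly from \cite{l} (Lemma~2.2 there), so there is no in-paper argument to compare against. (As an aside, the hypothesis ``Given $r\ge1$'' in the statement is evidently a typo, since the displayed cases cover $0<r<1$ and $0\le r<1$ as well; your proof correctly addresses the full intended range.)

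Your proof is correct and self-contained. The homogeneity reduction to the single variable $t=b/a$ is valid (both sides of each inequality are homogeneous of degree $r$, resp.\ $r+1$, and the degenerate case $a=0$ is indeed an equality between powers of $|b|$, with the conventions $0^{r-1}=\infty$ for $r<1$ and $0^{r-1}=0$ for $r>1$ making the claimed bounds trivially true). The split at $|t|=1/2$ is exactly the right threshold: on $|t|\le1/2$ the argument of $|1+s|^{r-1}$ stays in $[1/2,3/2]$, so $\phi''$ is uniformly bounded (with bound $r(r+1)2^{1-r}$ when $r<1$), making both the mean-value estimate and the second-order Taylor remainder valid away from the potential singularity at $s=-1$; and on $|t|\ge1/2$ the pure triangle-inequality bounds $\le C|t|^r$, $\le C|t|^{r+1}$ need no regularity at all. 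The final bookkeeping, matching which of $|t|$, $|t|^r$ (resp.\ $t^2$, $|t|^{r+1}$) is the smaller or larger in each regime of $|t|$ and absorbing the mismatch on $\tfrac12\le|t|\le1$ into $C(r)$, is correct. In short: a clean, elementary proof of a quoted fact.
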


\section{The slightly subcritical problem: existence of a positive solution}
\label{sec:positive solution}

From now on, we fix
\begin{align*}
s\in \left(\frac{2n}{n+2}\,,\,2^*\right).
\end{align*}
To prove Theorem~\ref{main1} we show that, for $\eps>0$ small enough, the problem
\begin{equation}\label{p2}
u-i^* \left[Q_\Omega  f_\eps (u)\right]=0,\qquad   u\in E,\qquad f_\eps(u):=|u|^{p-1-\eps}u,
\end{equation}
has a solution of the form $u:=U_{\delta,\xi}+\phi$, with
\begin{equation}\label{para}
\delta:=d\eps^\frac{1}{n}\ \hbox{with}\ d\in(0,\infty), \qquad \xi\in\Omega,\qquad\hbox{and}\qquad \phi\in K_{\delta,\xi}^\perp\cap L^s(\rn).
 \end{equation}
The  choice of $\delta$ is motivated by the leading terms in the expansion of the energy given in the proof of the Proposition~\ref{red-en}, see Remark \ref{rate:rem}.

Note that $U_{\delta,\xi}+\phi$ solves \eqref{p2} if and only if it solves the system
\begin{align}\label{s}
\Pi_{\delta,\xi}^\perp\left[U_{\delta,\xi}+\phi-i^* \left[Q_\Omega  f_\eps \left(U_{\delta,\xi}+\phi\right)\right]\right]=0,\qquad
\Pi_{\delta,\xi} \left[U_{\delta,\xi}-i^* \left[Q_\Omega  f_\eps \left(U_{\delta,\xi}+\phi\right)\right]\right]=0.
\end{align}
Our aim is to establish the existence of an error $\phi$ and parameters $\delta$ and $\xi$ satisfying these two equations.

\subsection{Existence of the error}

First, we focus our attention on solving the first equation in \eqref{s}. We will show that, if $\eps$ is small enough, this equation has a unique solution $\phi_{\delta,\xi}\in K_{\delta,\xi}^\perp\cap L^s(\rn)$ for each pair of parameters $(\delta,\xi)$, and we estimate the norm of the solution. We start with the following lemma.

\begin{lemma} \label{lem:estimates}
Given a compact subset $Q$ of $(0,1)\times\o$, there exist $\eps_1>0$ and $C>0$ such that, for every $\eps\in(0,\eps_1)$, $(d,\xi)\in Q$, and $\delta=d\eps^\frac{1}{n}$, the following statements hold true:
\begin{itemize}
\item[$(i)$]The function $\mathscr N:K^\perp_{\delta,\xi}\cap L^s(\rn)\to K^\perp_{\delta,\xi}\cap L^s(\rn)$, given by
$$\mathscr N(\phi):=(\Pi_{\delta,\xi}^\perp\circ i^*)\left[ Q_\Omega\left[  f_\eps \left(U_{\delta,\xi}+\phi \right) -
 f_\eps \left(U_{\delta,\xi}  \right) - f'_\eps \left(U_{\delta,\xi}\right)\phi  \right]\right],$$
is well defined and
\begin{align}\label{N:ref}
\|\mathscr N(\phi)\|_E\leq C(\|\phi\|_E^2 + \|\phi\|_E^{p-\eps})\qquad\text{for every \ }\phi\in K^\perp_{\delta,\xi}\cap L^s(\rn).
\end{align}
\item[$(ii)$]The function $\mathscr L_\eps:K^\perp_{\delta,\xi}\cap L^s(\rn)\to K^\perp_{\delta,\xi}\cap L^s(\rn)$, given by
$$\mathscr L_\eps (\phi):=(\Pi_{\delta,\xi}^\perp\circ i^*)\left[ Q_\Omega\left[  f'_\eps \left(U_{\delta,\xi}  \right)
 - f'_0\left(U_{\delta,\xi}\right)  \right]\phi\right]
 +2(\Pi_{\delta,\xi}^\perp\circ i^*)\left[-{\mathds 1}_{\mathbb R^n\setminus\Omega}  f'_0\left(U_{\delta,\xi}\right)\phi\right],$$
is well defined and
\begin{align}\label{re3}
\|\mathscr L_\eps(\phi)\|_E\leq C(\eps|\ln \eps|+\delta^2)\|\phi\|_E \qquad\text{for every \ }\phi\in K^\perp_{\delta,\xi}\cap L^s(\rn).
\end{align}
\item[$(iii)$]The quantity
\begin{align*}
\mathscr E:=(\Pi_{\delta,\xi}^\perp\circ i^*)\left[Q_\Omega\left[f_\eps\left(U_{\delta,\xi}\right)
 - f_0\left(U_{\delta,\xi}\right)  \right]\right]
 +2(\Pi_{\delta,\xi}^\perp\circ i^*)\left[
 -{\mathds 1}_{\mathbb R^n\setminus\Omega}  f_0\left(U_{\delta,\xi}\right)\right]
\end{align*}
is well defined and
$\|\mathscr{E}\|_E\leq C(\eps|\ln \eps|+\delta^\frac{n+2}{2}).$
\end{itemize}
\end{lemma}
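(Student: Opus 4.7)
My plan is to reduce each of the three statements to $L^{\frac{2n}{n+2}}\cap L^{\frac{sn}{n+2s}}$ estimates on the argument of $i^*$ (for $(i)$ and $(iii)$) or to $L^{n/2}$ estimates on the potential multiplying $\phi$ (for $(ii)$), and then invoke Lemma~\ref{lem:holder} together with the uniform boundedness of $\Pi_{\delta,\xi}^\perp$ on $E$ furnished by Lemma~\ref{lem:projections}(iii). The basic tools are the rescaling identity $U_{\delta,\xi}(\xi+\delta y)=\delta^{-(n-2)/2}U(y)$, the decay $U(y)\sim|y|^{-(n-2)}$ as $|y|\to\infty$, the uniform lower bound $\dist(\xi,\partial\o)\ge c>0$ on $Q$, and the identification $\delta=d\eps^{1/n}$, which turns any factor of $|\ln\delta|$ into $|\ln\eps|$.

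For $(i)$, I would apply Lemma~\ref{lem:li} with $r=p-1-\eps$, $a=U_{\delta,\xi}$ and $b=\phi$, treating the two regimes $p-1-\eps\gtreqless 1$ separately (in the sub-unit regime the ``min'' form allows bounding by either summand), to obtain
\begin{equation*}
\bigl|f_\eps(U_{\delta,\xi}+\phi)-f_\eps(U_{\delta,\xi})-f'_\eps(U_{\delta,\xi})\phi\bigr|\le C\bigl(U_{\delta,\xi}^{p-2-\eps}\phi^2+|\phi|^{p-\eps}\bigr).
\end{equation*}
Hölder's inequality, using that $|U_{\delta,\xi}|_{2^*}$ is scale-invariant and finite, combined with Sobolev's embedding and interpolation between $L^s$ and $L^{2^*}$, then estimates the $L^{\frac{2n}{n+2}}$ and $L^{\frac{sn}{n+2s}}$ norms of the right-hand side by $C(\|\phi\|_E^2+\|\phi\|_E^{p-\eps})$, after which Lemma~\ref{lem:holder}(i) delivers \eqref{N:ref}.

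For $(ii)$ and $(iii)$ the approach is parallel. I would write $f'_\eps(U)-f'_0(U)=(p-\eps)(U^{p-1-\eps}-U^{p-1})-\eps U^{p-1}$ and use $|t^{-\eps}-1|\le C\eps(|\ln t|+1)$ (separating $\{U\ge 1\}$ from $\{U<1\}$) to get the pointwise bound $|f'_\eps(U)-f'_0(U)|\le C\eps U^{p-1}(|\ln U|+1)$, and analogously $|f_\eps(U)-f_0(U)|\le C\eps U^{p}(|\ln U|+1)$. The change of variables $x=\xi+\delta y$ gives
\begin{equation*}
\irn U_{\delta,\xi}^{(p-1)n/2}(|\ln U_{\delta,\xi}|+1)^{n/2}\d x\sim|\ln\delta|^{n/2},\qquad\irn U_{\delta,\xi}^{\frac{2np}{n+2}}(|\ln U_{\delta,\xi}|+1)^{\frac{2n}{n+2}}\d x\sim|\ln\delta|^{\frac{2n}{n+2}},
\end{equation*}
so that Lemma~\ref{lem:holder}(ii) for $(ii)$ and Lemma~\ref{lem:holder}(i) for $(iii)$ produce the $\eps|\ln\eps|$ contribution in each case. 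For the cut-off pieces, on $\{|x-\xi|\ge c\}$ one has $U_{\delta,\xi}^{p-1}\le C\delta^{2}|x-\xi|^{-4}$ and $U_{\delta,\xi}^p\le C\delta^{(n+2)/2}|x-\xi|^{-(n+2)}$; direct integration then yields $|\mathds 1_{\rn\setminus\o}U_{\delta,\xi}^{p-1}|_{n/2}\le C\delta^2$ (the $\delta^2$ term in $(ii)$) and $|\mathds 1_{\rn\setminus\o}U_{\delta,\xi}^{p}|_r\le C\delta^{(n+2)/2}$ for $r\in\{\frac{2n}{n+2},\frac{sn}{n+2s}\}$ (the $\delta^{(n+2)/2}$ term in $(iii)$), completing the bounds.

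The main obstacle is the careful bookkeeping in the rescaled integrals: one must verify that all integrals at infinity converge (which ultimately comes down to the inequality $s>\frac{2n}{n+2}>1$, ensuring the pointwise decay of $U^p$ is integrable with the required weights), and that the separation of $\{U\ge 1\}$ and $\{U<1\}$ yields uniform control of the logarithmic factor. Once these are in place, everything reduces to a mechanical application of the preliminary lemmas and the rescaling formula.
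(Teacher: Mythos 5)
Your proposal is correct in outline but takes a genuinely different route from the paper in two places, and it contains one small technical imprecision worth flagging.

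For part $(i)$, the paper does not apply the second inequality of Lemma~\ref{lem:li} directly to the quadratic remainder. Instead, it first uses the mean value theorem to write
$|f_\eps(U_{\delta,\xi}+\phi)-f_\eps(U_{\delta,\xi})-f'_\eps(U_{\delta,\xi})\phi|=|f'_\eps(U_{\delta,\xi}+t\phi)-f'_\eps(U_{\delta,\xi})|\,|\phi|$,
applies the \emph{first} inequality of Lemma~\ref{lem:li} to the derivative difference, and then treats that difference as an $L^{n/2}$ potential multiplying $\phi$, invoking Lemma~\ref{lem:holder}$(ii)$. This reduces everything to a single $L^{n/2}$ estimate. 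Your direct approach via the pointwise bound $C(U_{\delta,\xi}^{p-2-\eps}\phi^2+|\phi|^{p-\eps})$ and Lemma~\ref{lem:holder}$(i)$ also works (provided, as you indicate, that for $n\ge 6$ you discard the $U^{p-2-\eps}\phi^2$ term, since there $p-2-\eps<0$), but it requires tracking two Hölder exponent pairs ($\tfrac{2n}{n+2}$ and $\tfrac{sn}{n+2s}$) for a product of three factors rather than one pair for a potential--$\phi$ product. Both are valid; the paper's is a bit cleaner.

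For parts $(ii)$ and $(iii)$, the paper obtains the $\eps|\ln\eps|$ factor by citing the unit-bubble estimates of \cite{mp} and transferring them to $U_{\delta,\xi}$ by a scaling computation that makes $|\delta^{\eps(n-2)/2}-1|\sim\eps|\ln\eps|$ appear; your approach derives the same bound intrinsically from a pointwise logarithmic estimate plus the change of variables. That is a legitimate and more self-contained route. However, the pointwise inequality you state, $|t^{-\eps}-1|\le C\eps(|\ln t|+1)$, is false uniformly near $t=0$ (the left side grows like $e^{\eps|\ln t|}$ while the right side is linear in $|\ln t|$). The correct uniform bound is $|t^{-\eps}-1|\le C\eps(|\ln t|+1)\,t^{-\eps}$ for $t\le 1$, so the resulting pointwise estimate should read $|f'_\eps(U)-f'_0(U)|\le C\eps U^{p-1-\eps}(|\ln U|+1)$ and $|f_\eps(U)-f_0(U)|\le C\eps U^{p-\eps}(|\ln U|+1)$, with $U^{p-1-\eps}$, $U^{p-\eps}$ rather than $U^{p-1}$, $U^{p}$. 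This does not damage the conclusion — after rescaling the extra $\delta^{c\eps}$ factor is bounded for $\eps$ small and the integrals still converge for the same exponent reasons — but as written the inequality needs this correction, or alternatively you must split into $\{U\ge e^{-c/\eps}\}$ (where your bound holds) and its exponentially small complement.
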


\begin{proof}
$(i):$ \ By the mean value theorem, for each $x\in\rn$, there exists $t\in(0,1)$ such that
$$|f_\eps(U_{\delta,\xi}+\phi) - f_\eps(U_{\delta,\xi}) - f'_\eps (U_{\delta,\xi})\phi|=|f'_\eps(U_{\delta,\xi}+t\phi)-f'_\eps(U_{\delta,\xi})||\phi|,$$
and, by Lemma~\ref{lem:li},
\begin{equation*}
|f'_\eps(U_{\delta,\xi}+t\phi)-f'_\eps(U_{\delta,\xi})|\leq C
\begin{cases}
|\phi|^{p-\eps-1} &\text{if \ }1<p-\eps<2,\\
U_{\delta,\xi}^{p-\eps-2}|\phi|+|\phi|^{p-\eps-1} &\text{if \ }2\leq p-\eps.
\end{cases}
\end{equation*}
Then, Hölder's inequality implies that $|f'_\eps(U_{\delta,\xi}+t\phi)-f'_\eps(U_{\delta,\xi})|\in L^\frac{n}{2}(\rn)$ and
$$|f'_\eps(U_{\delta,\xi}+t\phi)-f'_\eps(U_{\delta,\xi})|_\frac{n}{2}\leq C(|\phi|_{\frac{n}{2}(p-\eps-1)}+|\phi|_{\frac{n}{2}(p-\eps-1)}^{p-\eps-1})\leq C(\|\phi\|_E + \|\phi\|_E^{p-\eps-1}),$$
if $s\leq \frac{n}{2}(p-\eps-1)\leq 2^*$. Now the statement follows from Lemmas~\ref{lem:holder} and~\ref{lem:projections}.

$(ii):$ \ Using the mean value theorem as in \cite[Remark 2.21]{mp} one shows that $f'_\eps(U_{1,0}) - f'_0(U_{1,0})\in L^\frac{n}{2}(\rn)$ for $\eps$ small enough, and that
\begin{align}\label{re1}
|f'_\eps(U_{1,0}) - f'_0(U_{1,0})|_\frac{n}{2}\leq C\eps.
\end{align}
Since $U_{\delta,\xi}=P_{\delta,\xi}U_{1,0}$, performing a change of variable we obtain
\begin{align*}
\int_{\rn}|U_{\delta,\xi}^{p-1-\eps}-U_{\delta,\xi}^{p-1}|^\frac{n}{2}
&=\int_{\rn}|(\delta^{-\frac{n-2}{2}}U_{1,0})^{p-1-\eps}-(\delta^{-\frac{n-2}{2}}U_{1,0})^{p-1}|^\frac{n}{2} \delta^n\\
  &=\int_{\rn}|\delta^{\eps\frac{n-2}{2}} U_{1,0}^{p-1-\eps} -U_{1,0}^{p-1}|^\frac{n}{2}
  =\int_{\rn}|
  \delta^{\eps\frac{n-2}{2}} (U_{1,0}^{p-1-\eps}
  -U_{1,0}^{p-1})
  +(\delta^{\eps\frac{n-2}{2}}-1)U_{1,0}^{p-1}|^\frac{n}{2}.
\end{align*}
Therefore, by  \eqref{para} and \eqref{re1},
\begin{align}
|f'_\eps(U_{\delta,\xi})-f'_0(U_{\delta,\xi})|_\frac{n}{2}
&\leq C(\,|U_{\delta,\xi}^{p-1-\eps}-U_{\delta,\xi}^{p-1}|_\frac{n}{2} + \eps|U_{\delta,\xi}^{p-1}|_\frac{n}{2})\notag \\
&\leq C(\,\delta^{\eps\frac{n-2}{2}}\,|f'_\eps(U_{1,0}) - f'_0(U_{1,0})|_\frac{n}{2} + |\delta^{\eps\frac{n-2}{2}}-1|\,|U_{1,0}|_{2^*}^{p-1} + \eps|U_{\delta,\xi}^{p-1}|_\frac{n}{2}) \notag\\
&\leq C(|\eps^{\eps\frac{n-2}{2n}}|\eps +|\eps^{\eps\frac{n-2}{2n}}-1|+\eps)\leq C\eps|\ln \eps|
\label{re4}
\end{align}
(see \cite[Lemma B.13]{PST23} for a different argument).  An easy computation gives
\begin{align}\label{re2}
|{\mathds 1}_{\rn\setminus\o}f'_0(U_{\delta,\xi})|_\frac{n}{2}\leq C\delta^2.
\end{align}
Now Lemmas~\ref{lem:holder} and~\ref{lem:projections} yield the result.

$(iii):$ \ Using the mean value theorem as in \cite[Remark 2.21]{mp} one shows that $f_\eps(U_{1,0})-f_0(U_{1,0})\in L^\frac{2n}{n+2}(\rn)\cap L^\frac{sn}{n+2s}(\rn)$ for $\eps$ small enough, and that
\begin{equation}\label{mpineq}
|f_\eps(U_{1,0})-f_0(U_{1,0})|_\frac{2n}{n+2}\leq C\eps\qquad\text{and}\qquad|f_\eps(U_{1,0})-f_0(U_{1,0})|_\frac{sn}{n+2s}\leq C\eps.
\end{equation}
Then, by scaling,
\begin{align*}
\int_{\rn}|U_{\delta,\xi}^{p-\eps}-U_{\delta,\xi}^{p}|^\frac{2n}{n+2}
&=\int_{\rn}|(\delta^{-\frac{n-2}{2}}U_{1,0})^{p-\eps}-(\delta^{-\frac{n-2}{2}}U_{1,0})^{p}|^\frac{2n}{n+2} \delta^n\\
  &=\int_{\rn}|\delta^{\eps\frac{n-2}{2}} U_{1,0}^{p-\eps} -U_{1,0}^{p}|^\frac{2n}{n+2}
  =\int_{\rn}|
  \delta^{\eps\frac{n-2}{2}} (U_{1,0}^{p-\eps}
  -U_{1,0}^{p})
  +(\delta^{\eps\frac{n-2}{2}}-1)U_{1,0}^{p}|^\frac{2n}{n+2}.
\end{align*}
Hence, using \eqref{para} and \eqref{mpineq},
\begin{align}
|f_\eps(U_{\delta,\xi})-f_0(U_{\delta,\xi})|_\frac{2n}{n+2}&\leq |\delta^{\eps\frac{n-2}{2}}|\,|U_{1,0}^{p-\eps}
  -U_{1,0}^{p}|_\frac{2n}{n+2}
  +|\delta^{\eps\frac{n-2}{2}}-1|\,|U_{1,0}|^p_{2^*} \notag\\
&\leq C(|\eps^{\eps\frac{n-2}{2n}}|\eps
  +|\eps^{\eps\frac{n-2}{2n}}-1|\leq C\eps|\ln \eps|. \label{eq:iii}
\end{align}
One can argue similarly for $|f_\eps(U_{\delta,\xi})-f_0(U_{\delta,\xi})|_\frac{sn}{n+2s}$ to obtain
\begin{equation}\label{lol}
|f_\eps(U_{\delta,\xi})-f_0(U_{\delta,\xi})|_\frac{sn}{n+2s}\leq C\eps|\ln \eps|
\end{equation}
(see also \cite[Lemma B.12]{PST23} for a different argument).  Since
\begin{align}
|{\mathds 1}_{\rn\setminus\o}f_0(U_{\delta,\xi})|_\frac{2n}{n+2}\leq C\delta^\frac{n+2}{2}\qquad\text{and}\qquad |{\mathds 1}_{\rn\setminus\o}f_0(U_{\delta,\xi})|_\frac{sn}{n+2s}\leq C\delta^\frac{n+2}{2},\label{eq:mp2}
\end{align}
the statement follows from Lemmas~\ref{lem:projections} and ~\ref{lem:holder}.
\end{proof}

\begin{remark}
\emph{Note that some functions in the previous lemma depend on $d,\xi,$ and $\eps$, but we have omitted these indices to ease the notation.}
\end{remark}

The following proposition establishes the existence of the error $\phi_{\delta,\xi}$ and a bound for its norm.

\begin{proposition}\label{rid1}
Given an open bounded set $\t$ with $\overline{\t}\subset(0,\infty)\times\Omega$, there exist $c>0$ and $\eps_0>0$ such that, for each $\eps\in(0,\eps_0)$ and $(d,\xi)\in \t$, there is a unique $\phi_{\delta,\xi}\in K^\perp_{\delta,\xi}\cap L^s(\rn)$ with $\delta=d\eps^\frac{1}{n}$ which satisfies
\begin{align}\label{phi:err}
\|\phi_{\delta,\xi}\|_E\leq c\eps^\frac{n+2}{2n}
\end{align}
and solves the first equation in \eqref{s}. Furthermore, for each $\eps\in(0,\eps_0)$, the map $(d,\xi)\mapsto \phi_{\delta,\xi}$ is of class $\cC^1$ in $\t$.
\end{proposition}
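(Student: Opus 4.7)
The plan is to rewrite the first equation in \eqref{s} as a fixed-point equation on $K^\perp_{\delta,\xi}\cap L^s(\rn)$ and solve it via the Banach contraction principle. Using $U_{\delta,\xi}=i^*[f_0(U_{\delta,\xi})]$, splitting $Q_\Omega={\mathds 1}_\Omega-{\mathds 1}_{\rn\setminus\Omega}$, and Taylor expanding $f_\eps(U_{\delta,\xi}+\phi)=f_\eps(U_{\delta,\xi})+f'_\eps(U_{\delta,\xi})\phi+R(\phi)$, the equation is turned, after a direct algebraic manipulation, into
\[
(I-\cL_0)\phi \;=\; \mathscr E + \mathscr L_\eps(\phi) + \mathscr N(\phi),
\qquad \cL_0\phi:=p\,\Pi^\perp_{\delta,\xi}\,i^*[U_{\delta,\xi}^{p-1}\phi],
\]
where $\mathscr E$, $\mathscr L_\eps$, $\mathscr N$ are exactly the quantities estimated in Lemma~\ref{lem:estimates}.

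The next step is the uniform invertibility of $I-\cL_0$ on $K^\perp_{\delta,\xi}\cap L^s(\rn)$. The conformal invariance $i^*[U_{\delta,\xi}^{p-1}(P_{\delta,\xi}\tilde\phi)]=P_{\delta,\xi}\,i^*[U_{1,0}^{p-1}\tilde\phi]$, combined with \eqref{eq:proyecciones}, gives the intertwining $\cL_0=P_{\delta,\xi}\cL_0^{(1,0)}P_{\delta,\xi}^{-1}$, so invertibility reduces to that of $I-\cL_0^{(1,0)}$ on the fixed space $K^\perp_{1,0}\cap L^s(\rn)$. The latter follows from the classical nondegeneracy $\ker(I-\cL_0^{(1,0)})\cap D^{1,2}(\rn)=K_{1,0}$, cf.\ \eqref{eq:linear equation}--\eqref{k}, upgraded to $L^s$ via a Hardy--Littlewood--Sobolev Fredholm alternative; I would either quote this as a preliminary result or establish it in a short preparatory lemma. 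Thanks to \eqref{eq:s_rescaling} and the fact that $\delta\in(0,1)$ for $\eps$ small, one then gets $\|(I-\cL_0)^{-1}\|_{E\to E}\le C$ uniformly in $(d,\xi)\in\overline\Theta$.

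With the inverse under control, define $T(\phi):=(I-\cL_0)^{-1}[\mathscr E+\mathscr L_\eps(\phi)+\mathscr N(\phi)]$ on $B_R:=\{\phi\in K^\perp_{\delta,\xi}\cap L^s(\rn):\|\phi\|_E\le R\eps^{(n+2)/(2n)}\}$. Since $\delta=d\eps^{1/n}$ and $(n+2)/(2n)<1$, Lemma~\ref{lem:estimates}(iii) yields $\|\mathscr E\|_E\le C\eps^{(n+2)/(2n)}$, with $\delta^{(n+2)/2}$ as the dominant contribution; by (ii), $\mathscr L_\eps$ has operator norm $o(1)$, and by (i), $\mathscr N$ is superlinear on $B_R$. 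Running the mean-value arguments of Lemma~\ref{lem:estimates} with two comparison points delivers the Lipschitz bound $\|\mathscr L_\eps(\phi)-\mathscr L_\eps(\psi)\|_E+\|\mathscr N(\phi)-\mathscr N(\psi)\|_E \le o(1)\,\|\phi-\psi\|_E$ for $\phi,\psi\in B_R$. These estimates together imply that, for $R$ large and $\eps$ small, $T$ sends $B_R$ into itself and is a contraction, and the Banach fixed point theorem produces the unique $\phi_{\delta,\xi}$ satisfying \eqref{phi:err}.

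For the $\cC^1$ dependence on $(d,\xi)$, I would apply the implicit function theorem, but first eliminate the parameter dependence of the target space by setting $\tilde\phi:=P_{\delta,\xi}^{-1}\phi\in K^\perp_{1,0}\cap L^s(\rn)$ and considering $F(d,\xi,\tilde\phi):=\tilde\phi-P_{\delta,\xi}^{-1}T(P_{\delta,\xi}\tilde\phi)$. The smoothness of $P_{\delta,\xi}$ in $(d,\xi)$ and of the Nemytski-type operators inside $T$ make $F$ of class $\cC^1$, and $\partial_{\tilde\phi}F$ at the fixed point is the identity plus an $o(1)$ perturbation, hence invertible, so the implicit function theorem yields the desired map $(d,\xi)\mapsto\phi_{\delta,\xi}$. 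The hardest parts of the argument, in my view, are the $L^s$-upgrade of the nondegeneracy of bubbles needed for the uniform invertibility of $I-\cL_0$, and the verification of $\cC^1$-smoothness of the Nemytski map $\phi\mapsto f_\eps(U_{\delta,\xi}+\phi)$ when $p-\eps<2$ (i.e.\ in dimensions $n\ge 6$), where $f''_\eps$ is singular and differentiability has to be extracted by a careful dominated-convergence argument.
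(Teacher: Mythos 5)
Your proposal follows essentially the same route as the paper: rewrite the first equation in \eqref{s} as $\mathscr L\phi=\mathscr E+\mathscr L_\eps(\phi)+\mathscr N(\phi)$ (your $I-\cL_0$ is the paper's $\mathscr L$), establish uniform invertibility of $\mathscr L$ by conjugating with $P_{\delta,\xi}$, solve by Banach's fixed point theorem using Lemma~\ref{lem:estimates}, and derive $\cC^1$-dependence by the rescaling $\tilde\phi=P_{\delta,\xi}^{-1}\phi$ followed by the implicit function theorem; this is precisely the paper's argument, and your invertibility outline matches Lemmas~\ref{B1} and~\ref{B2} in the appendix. One caution: the conjugation argument for $\|\mathscr L^{-1}\|_{E\to E}\le C$ is not a pure intertwining, because $P_{\delta,\xi}$ is an isometry on $D^{1,2}$ but scales $|\cdot|_s$ by $\delta^\alpha$; the paper's Lemma~\ref{B2} combines the two norms and takes $\delta$ small to close the estimate, and your phrasing should make that step explicit rather than suggesting the uniform $E$-bound drops straight out of \eqref{eq:s_rescaling}. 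Your observation that Fr\'echet differentiability of $\phi\mapsto f_\eps(U_{\delta,\xi}+\phi)$ needs care when $p-\eps<2$ (i.e., $n\ge 6$) is a genuine subtlety that the paper passes over silently when asserting that $F_\eps$ is of class $\cC^1$ before invoking \cite[(10.2.1)]{d}, so spelling out the dominated-convergence argument there, as you propose, would make the write-up more complete than the paper's.
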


\begin{proof}
Let $\mathscr L:K^\perp_{\delta,\xi}\cap L^s(\rn)\to K^\perp_{\delta,\xi}\cap L^s(\rn)$ be given by
$$\mathscr L(\phi):=\Pi_{\delta,\xi}^\perp[\phi-i^*[f'_0 (U_{\delta,\xi})\phi]].$$
Since $s\in(\frac{2n}{n+2},2^*)$, this is a Banach isomorphism, i.e., $\mathscr L$ is invertible and there is a constant $C=C(n,s)$ such that, for $\eps>0$ small enough,
$$\|\mathscr L^{-1}(\psi)\|_E\leq C\|\psi\|_E\qquad\text{for every \ }\psi\in K^\perp_{\delta,\xi}\cap L^s(\rn), \ (d,\xi)\in\overline{\t}, \ \delta=d\eps^\frac{1}{n},$$
see Lemmas~\ref{B1} and~\ref{B2}. Noting that $U_{\delta,\xi}=i^* \left[ f_0 \left(U_{\delta,\xi}\right)\right]$ and that $Q_\o v=v-2{\mathds 1}_{\mathbb R^n\setminus\Omega}v$, the first equation in \eqref{s} can be rewritten as
\begin{align*}
\mathscr L(\phi):=\Pi_{\delta,\xi}^\perp\left[\phi-i^* \left[   f'_0 \left(U_{\delta,\xi} \right)\phi \right]\right]
 &=\underbrace{(\Pi_{\delta,\xi}^\perp\circ i^*)\left[  Q_\Omega\left[  f_\eps \left(U_{\delta,\xi}+\phi \right) -
 f_\eps \left(U_{\delta,\xi}  \right) - f'_\eps \left(U_{\delta,\xi}\right)\phi  \right]\right]}_{=:\mathscr N(\phi)} \notag\\
&+\underbrace{(\Pi_{\delta,\xi}^\perp\circ i^*)\left[  Q_\Omega\left[  f'_\eps \left(U_{\delta,\xi}  \right)
 - f'_0\left(U_{\delta,\xi}\right)  \right]\phi\right]
 +2(\Pi_{\delta,\xi}^\perp\circ i^*)\left[-{\mathds 1}_{\mathbb R^n\setminus\Omega}  f'_0\left(U_{\delta,\xi}\right) \phi\right]}_{=:\mathscr L_\eps (\phi)}\notag\\
&+\underbrace{(\Pi_{\delta,\xi}^\perp\circ i^*)\left[
Q_\Omega\left[  f_\eps \left(U_{\delta,\xi}  \right)
 - f_0\left(U_{\delta,\xi}\right)  \right]\right]
 +2(\Pi_{\delta,\xi}^\perp\circ i^*)\left[
 -{\mathds 1}_{\mathbb R^n\setminus\Omega}  f_0\left(U_{\delta,\xi}\right)\right].}_{=:\mathscr E}
\end{align*}
Let $T=T_{\delta,\xi}:K^\perp_{\delta,\xi}\cap L^s(\rn)\to K^\perp_{\delta,\xi}\cap L^s(\rn)$ be given by
\begin{align*}
 T(\phi) := \mathscr L^{-1}\left(\mathscr N(\phi)+\mathscr L_\eps (\phi)+\mathscr E\right).
\end{align*}
It follows from Lemma~\ref{lem:estimates} that
\begin{align*}
 \|T \phi\|_E \leq C(\|\mathscr N(\phi)\|_E+\|\mathscr L_\eps(\phi)\|_E+\|\mathscr E\|_E)\leq C(\|\phi\|_E^2+\|\phi\|_E^{p-\eps}+(\eps|\ln \eps|+\delta^2)\|\phi\|_E+\eps|\ln \eps|+\delta^{\frac{n+2}{2}}).
\end{align*}
 So, for a suitable constant $c>0$ and $\eps$ small enough, the map
\begin{align*}
T:\{\phi \in K^\perp_{\delta,\xi}\cap L^s(\rn):\|\phi\|_{E} \leq c\eps^\frac{n+2}{2n}\} \to\{\phi \in K^\perp_{\delta,\xi}\cap L^s(\rn):\|\phi\|_E \leq c\eps^\frac{n+2}{2n}\}
\end{align*}
is well defined. Applying the mean value theorem to the function $t\mapsto f_\eps(U_{\delta,\xi}+t\phi_1+(1-t)\phi_2)$ and arguing as in the proof of Lemma~\ref{lem:estimates}$(i)$ we get
\begin{align*}
 \|T\phi_1-T\phi_2\|_E & \leq C( \|\mathscr N(\phi_1)-\mathscr N(\phi_2)\|_E + \|\mathscr L_\eps (\phi_1-\phi_2)\|_E)\\
&\leq C(\|\phi_1\|_E + \|\phi_2\|_E + \|\phi_1\|_E^{p-\eps-1} + \|\phi_2\|_E^{p-\eps-1})\|\phi_1-\phi_2\|_E +(\eps|\ln \eps|+\delta^2)\|\phi_1-\phi_2\|_E.
\end{align*}
Hence, $T$ is a contraction mapping for $\eps$ small enough and, by Banach's fixed point theorem, there exists a unique $\phi_\eps^{d,\xi}\in K^\perp_{\delta,\xi}\cap L^s(\rn)$ with $\|\phi_\eps^{d,\xi}\|_{E} \leq c\eps^\frac{n+2}{2n}$ satisfying $T(\phi_\eps^{d,\xi})=\phi_\eps^{d,\xi}$, i.e., $\mathscr L(\phi_\eps^{d,\xi})=\mathscr N(\phi_\eps^{d,\xi})+\mathscr L_\eps (\phi_\eps^{d,\xi})+\mathscr E$ or, equivalently, $\phi_\eps^{d,\xi}$ solves the first equation in \eqref{s}.

Next, we prove that the map $(d,\xi)\mapsto \phi_{\delta,\xi}$ is of class $\cC^1$ in $\t$ for small enough $\eps$. To this end, we fix such an $\eps>0$ and set $\cU_\eps:=\{(d,\xi,\psi)\in \t\times [K_{1,0}^\perp\cap L^s(\rn)]:\|P_{\delta,\xi}\psi\|_{E} \leq c\eps^\frac{n+2}{2n}\}$ with $\delta=d\eps^\frac{1}{n}$ and $P_{\delta,\xi}$ as in \eqref{eq:rescaling}. The map $F_\eps:\cU_\eps\to K_{0,1}^\perp\cap L^s(\rn)$ given by
$$F_\eps(d,\xi,\psi)=\psi-P_{\delta,\xi}^{-1}T_{\delta,\xi}(P_{\delta,\xi}\psi)$$
is of class $\cC^1$, \ $F_\eps^{-1}(0)=\{(d,\xi,P_{\delta,\xi}^{-1}\phi_\eps^{d,\xi}):(d,\xi)\in\t\}$ \ and
\begin{align*}
\partial_\psi F_\eps(d,\xi,\psi)=\mathrm{id}_{K_{0,1}^\perp\cap L^s(\rn)} - P_{\delta,\xi}^{-1}\circ T'_{\delta,\xi}(P_{\delta,\xi}\psi)\circ P_{\delta,\xi}=P_{\delta,\xi}^{-1}\circ \big(\mathrm{id}_{K_{\delta,\xi}^\perp\cap L^s(\rn)} - T'_{\delta,\xi}(P_{\delta,\xi}\psi)\big)\circ P_{\delta,\xi}.
\end{align*}
We claim that $\partial_\psi F_\eps(d,\xi,P_{\delta,\xi}^{-1}\phi_\eps^{d,\xi}):K^\perp_{1,0}\cap L^s(\rn)\to K^\perp_{1,0}\cap L^s(\rn)$ is a linear homeomorphism for each $(d,\xi)\in\t$ if $\eps$ is sufficiently small. Indeed, since
\begin{align*}
T_{\delta,\xi}'(\phi)[\psi]&=\mathscr L^{-1}[\mathscr N'(\phi)[\psi]-\mathscr L_\eps\psi]=\mathscr L^{-1}[(\Pi^\perp_{\delta,\xi}\circ i^*)\big[Q_\o(f'_\eps(U_{\delta,\xi}+\phi)-f'_\eps(U_{\delta,\xi}))\psi]\big] -\mathscr L^{-1}\mathscr L_\eps\psi
\end{align*}
for all $\phi,\psi\in K^\perp_{\delta,\xi}\cap L^s(\rn)$, arguing as in the proof of Lemma~\ref{lem:estimates} we obtain
$$\|T_{\delta,\xi}'(\phi_\eps^{d,\xi})[\psi]\|_E\leq C\big[\|\phi_\eps^{d,\xi}\|_E+\|\phi_\eps^{d,\xi}\|_E^{p-\eps-1} + (\eps|\ln \eps|+\delta^2)\big]\|\psi\|_E.$$
This shows that \ $T'_{\delta,\xi}(\phi_\eps^{d,\xi})$ \ tends to $0$ as $\eps\to 0$ in the Banach space space of continuous linear maps $K^\perp_{\delta,\xi}\cap L^s(\rn)\to K^\perp_{\delta,\xi}\cap L^s(\rn)$. Since the set of linear homeomorphisms is open in this space (see \cite[(8.3.2)]{d}), we have that $\mathrm{id}_{K_{\delta,\xi}^\perp\cap L^s(\rn)} - T'_{\delta,\xi}(\phi_\eps^{d,\xi})$ is a linear homeomorphism for $\eps$ sufficiently small. Hence, $\partial_\psi F_\eps(d,\xi,P_{\delta,\xi}^{-1}\phi_\eps^{d,\xi})$ is a linear homeomorphism for $\eps$ sufficiently small, as claimed. Now we may apply the implicit function theorem \cite[(10.2.1)]{d} to derive that $(d,\xi)\mapsto P_{d,\xi}^{-1}\phi_{\delta,\xi}$ is of class $\cC^1$. It follows that $(d,\xi)\mapsto \phi_{\delta,\xi}$ is of class $\cC^1$. This completes the proof.
\end{proof}

\subsection{Finding the right parameters}

Our next goal is to show that there are parameters $\delta$ and $\xi$ that solve the second equation in \eqref{s} with $\phi=\phi_{\delta,\xi}$. They are related to the critical points of the following function.

Given an open bounded set $\t$ with $\overline{\t}\subset(0,\infty)\times\Omega$, let $\eps_0>0$ be as in Proposition~\ref{rid1} and, for $\eps\in(0,\eps_0)$, define $\tilde J_\eps:\t\to\r$ by
\begin{align*}
\tilde J_\eps(d,\xi):=J_\eps(U_{\delta,\xi}+\phi_{\delta,\xi}),
\end{align*}
 where $\delta=d\eps^\frac{1}{n}$ and
 \begin{align*}
J_\eps(u):=\frac{1}{2} \irn|\nabla u|^2-\frac {1}{p+1-\eps}\irn Q_\o |u|^{p+1-\eps}.
 \end{align*}
 $\tilde J_\eps$ is called \emph{the reduced energy functional}.  The following lemma shows its relevance.

\begin{lemma}\label{iff:lem}
Let $(d,\xi)\in\t$ and let $\phi_{\delta,\xi}$ be as in Proposition \ref{rid1}. Then, for $\eps$ small enough,
\begin{align*}
\text{$U_{\delta,\xi}+\phi_{\delta,\xi}$ solves \eqref{s} if and only if $(d,\xi)$ is a critical point of $\tilde J_\eps$.}
\end{align*}
\end{lemma}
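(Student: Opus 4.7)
The plan is to interpret both equations in \eqref{s} variationally through the identity $J_\eps'(u)[v] = \langle u - i^*[Q_\o f_\eps(u)], v\rangle$, which follows immediately from the definition of $J_\eps$ and the defining property of $i^*$. Setting $u := U_{\delta,\xi}+\phi_{\delta,\xi}$ and $R(d,\xi) := u - i^*[Q_\o f_\eps(u)]$, Proposition~\ref{rid1} yields $\Pi^\perp_{\delta,\xi}R(d,\xi) = 0$, so the residual lies in $K_{\delta,\xi}$ and admits a unique expansion
\begin{equation*}
R(d,\xi) = \sum_{j=0}^n c_j(d,\xi)\, Z^j_{\delta,\xi}.
\end{equation*}
By the linear independence of the $Z^j_{\delta,\xi}$, the second equation of \eqref{s} is equivalent to $c_0(d,\xi) = \cdots = c_n(d,\xi) = 0$. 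The task therefore reduces to showing that, for $\eps$ small enough, the vector $\mathbf{c}(d,\xi) := (c_0,\dots,c_n)^T$ vanishes precisely when $(d,\xi)$ is a critical point of $\tilde J_\eps$.

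To establish this, I differentiate the identity $\tilde J_\eps(d,\xi) = J_\eps(u)$ via the chain rule. Since $\delta = d\eps^{1/n}$, one has $\partial_d U_{\delta,\xi} = d^{-1}Z^0_{\delta,\xi}$ and $\partial_{\xi_i} U_{\delta,\xi} = \delta^{-1}Z^i_{\delta,\xi}$, both belonging to $K_{\delta,\xi}$. The derivatives $\partial_d \phi_{\delta,\xi}$ and $\partial_{\xi_i}\phi_{\delta,\xi}$ do not a priori lie in $K^\perp_{\delta,\xi}$, since the subspace itself depends on $(d,\xi)$; however, differentiating the orthogonality constraint $\langle \phi_{\delta,\xi}, Z^j_{\delta,\xi}\rangle = 0$ expresses their $K_{\delta,\xi}$-components as $-\langle \phi_{\delta,\xi}, \partial_d Z^j_{\delta,\xi}\rangle$ and $-\langle \phi_{\delta,\xi}, \partial_{\xi_i}Z^j_{\delta,\xi}\rangle$, respectively. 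Substituting into the variational identity above and using the classical orthogonality $\langle Z^i_{1,0}, Z^j_{1,0}\rangle = \|Z^i_{1,0}\|^2\,\delta_{ij}$ (a consequence of \eqref{eq:linear equation} together with the parity of the $Z^j_{1,0}$, which transfers to $Z^j_{\delta,\xi}$ via the isometry $P_{\delta,\xi}$), I obtain a linear relation
\begin{equation*}
\nabla\tilde J_\eps(d,\xi) = M(d,\xi,\eps)\,\mathbf{c}(d,\xi),
\end{equation*}
with $M = \operatorname{diag}\bigl(d^{-1}\|Z^0_{1,0}\|^2,\ \delta^{-1}\|Z^1_{1,0}\|^2,\ \dots,\ \delta^{-1}\|Z^n_{1,0}\|^2\bigr) + N(d,\xi,\eps)$, where the entries of $N$ are the small pairings $-\langle\phi_{\delta,\xi}, \partial_\bullet Z^j_{\delta,\xi}\rangle$.

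The main obstacle is to verify that $M(d,\xi,\eps)$ is invertible for small $\eps$, uniformly on $\overline{\t}$, so that $\nabla\tilde J_\eps(d,\xi) = 0$ if and only if $\mathbf{c}(d,\xi) = 0$. I would handle this by rescaling through $P_{\delta,\xi}$: writing $\phi_{\delta,\xi} = P_{\delta,\xi}\tilde\phi_{d,\xi}$ with $\|\tilde\phi_{d,\xi}\|_E = O(\eps^{(n+2)/(2n)})$ by Proposition~\ref{rid1}, and observing that the $D^{1,2}$-norms of $d\,\partial_d Z^j_{\delta,\xi}$ and $\delta\,\partial_{\xi_i}Z^j_{\delta,\xi}$ stay uniformly bounded on $\overline{\t}$, one checks that after multiplying the first row of $M$ by $d$ and the remaining rows by $\delta$, the rebalanced diagonal remains bounded away from zero while the rebalanced off-diagonal entries are $O(\eps^{(n+2)/(2n)})$. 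Hence $M$ is invertible for all sufficiently small $\eps$, and, combined with Proposition~\ref{rid1}, the equivalence stated in the lemma follows.
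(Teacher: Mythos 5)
Your proof is correct and rests on the same underlying idea as the paper's, namely that the residual lies in $K_{\delta,\xi}$ and that the pairing of the residual against the derivatives of the ansatz produces a nondegenerate linear relation once the small $\phi$-contributions are controlled. The paper phrases this by showing that the vectors $Z^j_{\delta,\xi}+\delta\,\Pi_{\delta,\xi}\partial_{\bullet}\phi_{\delta,\xi}$ remain a basis of $K_{\delta,\xi}$ for small $\eps$, so that $J'_\eps(U_{\delta,\xi}+\phi_{\delta,\xi})$ vanishing on this perturbed basis forces it to vanish on all of $K_{\delta,\xi}$; you instead package the same information as an invertible matrix $M$ linking $\nabla\tilde J_\eps$ to the coefficient vector $\mathbf{c}$. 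The two formulations are equivalent (both come down to the estimate $\|\delta\,\Pi_{\delta,\xi}\partial_{\bullet}\phi_{\delta,\xi}\|\le C\|\phi_{\delta,\xi}\|=O(\eps^{(n+2)/(2n)})$, derived by differentiating the orthogonality constraint and bounding $\|\partial_{\bullet}Z^j_{\delta,\xi}\|=O(\delta^{-1})$), but your matrix form buys a slightly more symmetric presentation in which both implications of the equivalence come out simultaneously, whereas the paper treats the easy direction by the chain rule and only does the perturbation work for the converse. Both versions implicitly use that $(d,\xi)\mapsto\phi_{\delta,\xi}$ is $\mathcal{C}^1$, which Proposition~\ref{rid1} provides.
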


\begin{proof}
Fix $\eps>0$, small enough. Note that $U_{\delta,\xi}+\phi_{\delta,\xi}$ solves \eqref{s} if and only if $U_{\delta,\xi}+\phi_{\delta,\xi}$ is a critical point of $J_\eps$.

The function $h_\eps:\t\to E$ given by $h_\eps(d,\xi):=U_{\delta,\xi}+\phi_{\delta,\xi}$ with $\delta=d\eps^\frac{1}{n}$ is of class $\cC^1$. By the chain rule, $\widetilde J'_\eps(d,\xi)=J'_\eps(U_{\delta,\xi}+\phi_{\delta,\xi})\circ h'_\eps(d,\xi)$. Therefore, if $U_{\delta,\xi}+\phi_{\delta,\xi}$ is a critical point of $J_\eps$, then $(d,\xi)$ is a critical point of $\tilde J_\eps$.

On the other hand, as $U_{\delta,\xi}+\phi_{\delta,\xi}$ solves the first equation in \eqref{s}, we get that
$$U_{\delta,\xi}+\phi_{\delta,\xi}-i^*[Q_\o f_\eps(U_{\delta,\xi}+\phi_{\delta,\xi})]\in K_{\delta,\xi}.$$
Therefore,
$$\langle U_{\delta,\xi}+\phi_{\delta,\xi},\psi\rangle = \langle i^*(Q_\o f_\eps(U_{\delta,\xi}+\phi_{\delta,\xi}),\psi\rangle = \irn Q_\o f_\eps(U_{\delta,\xi}+\phi_{\delta,\xi}) \psi\qquad\text{for every \ }\psi\in K_{\delta,\xi}^\perp\cap L^s(\rn),$$
that is, 
\begin{equation}\label{eq:J'K perp}
J'_\eps(U_{\delta,\xi}+\phi_{\delta,\xi})[\psi]=0\qquad\text{for every \ }\psi\in K_{\delta,\xi}^\perp\cap L^s(\rn).
\end{equation}
If $(d,\xi)$ is a critical point of $\tilde J_\eps$, then, recalling that $Z^i_{\delta,\xi}=\delta\, \partial_{\xi_i}U_{\delta,\xi}$ and using \eqref{eq:J'K perp} we have that
\begin{equation}\label{eq:kernel}
0=\delta\, J'_\eps(U_{\delta,\xi}+\phi_{\delta,\xi})\,[\partial_{\xi_i}U_{\delta,\xi} + \partial_{\xi_i}\phi_{\delta,\xi}]=J'_\eps(U_{\delta,\xi}+\phi_{\delta,\xi})\,[Z^i_{\delta,\xi} + \delta\,\Pi_{\delta,\xi}\partial_{\xi_i}\phi_{\delta,\xi}],\quad i=1,\ldots,n,
\end{equation}
and similarly for $\partial_{\delta}$. Our goal is to show that
\begin{equation}\label{eq:claim}
K_{\delta,\xi}=\mathrm{span}\{Z^0_{\delta,\xi} + \delta\,\Pi_{\delta,\xi}\partial_{\delta}\phi_{\delta,\xi}, \ Z^i_{\delta,\xi} + \delta\,\Pi_{\delta,\xi}\partial_{\xi_i}\phi_{\delta,\xi},\, i=1,\ldots,n\},
\end{equation}
as, then, it would follow from \eqref{eq:kernel} that
\begin{equation*}
J'_\eps(U_{\delta,\xi}+\phi_{\delta,\xi})[Z]=0\qquad\text{for every \ }Z\in K_{\delta,\xi}.
\end{equation*}
This, together with \eqref{eq:J'K perp} implies that $U_{\delta,\xi}+\phi_{\delta,\xi}$ is a critical point of $J_\eps$, as claimed.

To prove \eqref{eq:claim} recall that $K_{\delta,\xi}=\mathrm{span}\{Z^i_{\delta,\xi}:i=0,1,\ldots,n\}$. Note that $\langle Z^i_{\delta,\xi},Z^j_{\delta,\xi}\rangle =0$ if $i\neq j$, and that $\|Z^i_{\delta,\xi}\|=\|Z^i_{1,0}\|$ does not depend on $\delta$ and $\xi$. As $Z^i_{\delta,\xi}$ solves \eqref{eq:linear equation}, we have that $-\Delta(\partial_{\xi_j}Z^i_{\delta,\xi})=p\partial_{\xi_j}(U_{\delta,\xi}^{p-1}Z^i_{\delta,\xi})$. Testing this equation with $\partial_{\xi_j}Z^i_{\delta,\xi}$ yields that $\|\partial_{\xi_j}Z^i_{\delta,\xi}\|=O(\delta^{-1})$. Now,
$$\Pi_{\delta,\xi}\partial_{\xi_i}\phi_{\delta,\xi}=\sum_{j=0}^n\langle Z^j_{\delta,\xi},\partial_{\xi_i}\phi_{\delta,\xi}\rangle\frac{Z^j_{\delta,\xi}}{\|Z^j_{\delta,\xi}\|^2}.$$
Since $\langle Z^j_{\delta,\xi},\phi_{\delta,\xi}\rangle=0$, we have that $\langle Z^j_{\delta,\xi},\partial_{\xi_i}\phi_{\delta,\xi}\rangle=-\langle \partial_{\xi_i}Z^j_{\delta,\xi},\phi_{\delta,\xi}\rangle$. Therefore,
\begin{equation}\label{eq:norm phi}
\|\delta\,\Pi_{\delta,\xi}\partial_{\xi_i}\phi_{\delta,\xi}\|\leq C\|\phi_{\delta,\xi}\|\leq C\eps^\frac{n+2}{2n},
\end{equation}
and, similarly, for $\partial_\delta$. As a consequence, $Z^0_{\delta,\xi} + \delta\,\Pi_{\delta,\xi}\partial_{\delta}\phi_{\delta,\xi}, \ Z^1_{\delta,\xi} + \delta\,\Pi_{\delta,\xi}\partial_{\xi_1}\phi_{\delta,\xi}, \ \ldots, \ Z^n_{\delta,\xi} + \delta\,\Pi_{\delta,\xi}\partial_{\xi_n}\phi_{\delta,\xi}$ are linearly independent for $\eps$ small enough. This proves \eqref{eq:claim}, and completes the proof.
\end{proof}

To find suitable critical points of $\widetilde J$, we need to identify its \emph{leading terms}. This is the point of the following proposition.

\begin{proposition}
\label{red-en}
The expansion
\begin{align}\label{em1}
\tilde J_\eps(d,\xi)=\mathfrak a+\mathfrak b \eps+\mathfrak c \eps\ln\eps+\eps\Psi(d,\xi)+o(\eps),
\end{align}
holds true $\cC^1$-uniformly in $\overline{\t}$ as $\eps\to 0$, where
\begin{equation}\label{PSI}
\Psi(d,\xi):=\mathfrak c_1 d^n
\psi_\Omega(\xi)
-\mathfrak c_2\ln d,\qquad
\psi_\Omega(\xi):=\int_{\mathbb R^n\setminus \Omega}\frac1{|x-\xi|^{2n}}\d x.
\end{equation}
All constants depend only on $n$, $\mathfrak a=\frac{1}{n}S^\frac{n}{2}$, where $S$ is the best constant for the Sobolev embedding $D^{1,2}(\rn)\hookrightarrow L^{2^*}(\rn)$, and the constants $\mathfrak c_1$ and $\mathfrak c_2$ are positive.
\end{proposition}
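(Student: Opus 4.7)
My strategy is to first absorb the error $\phi_{\delta,\xi}$ into an $o(\eps)$ remainder, and then expand $J_\eps(U_{\delta,\xi})$ directly. For the reduction I would write
\[
\tilde J_\eps(d,\xi)-J_\eps(U_{\delta,\xi})=J_\eps'(U_{\delta,\xi})[\phi_{\delta,\xi}]+\int_0^1(1-t)J_\eps''(U_{\delta,\xi}+t\phi_{\delta,\xi})[\phi_{\delta,\xi},\phi_{\delta,\xi}]\d t.
\]
The linear term equals $\langle U_{\delta,\xi}-i^*[Q_\o f_\eps(U_{\delta,\xi})],\phi_{\delta,\xi}\rangle$, which by (the proof of) Lemma~\ref{lem:estimates}$(iii)$ and $\|\phi_{\delta,\xi}\|_E\le c\eps^{(n+2)/(2n)}$ from Proposition~\ref{rid1} is $O((\eps|\ln\eps|+\delta^{(n+2)/2})\,\eps^{(n+2)/(2n)})=o(\eps)$, since $\delta=d\eps^{1/n}$; the quadratic term is $O(\|\phi_{\delta,\xi}\|_E^2)=O(\eps^{(n+2)/n})=o(\eps)$ because $(n+2)/n>1$.

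Next I would expand $J_\eps(U_{\delta,\xi})$ itself. Using $-\Delta U_{\delta,\xi}=U_{\delta,\xi}^p$, so that $\irn|\nabla U_{\delta,\xi}|^2=\irn U_{\delta,\xi}^{p+1}=S^{n/2}$, and splitting $Q_\o=1-2\mathds{1}_{\rn\setminus\o}$, I obtain
\[
J_\eps(U_{\delta,\xi})=\tfrac{1}{2}S^{n/2}-\tfrac{1}{p+1-\eps}\irn U_{\delta,\xi}^{p+1-\eps}+\tfrac{2}{p+1-\eps}\int_{\rn\setminus\o}U_{\delta,\xi}^{p+1-\eps}.
\]
For the full-space term I would use the scaling $\irn U_{\delta,\xi}^{p+1-\eps}=\delta^{(n-2)\eps/2}\irn U^{p+1-\eps}$ together with the Taylor expansions $\delta^{(n-2)\eps/2}=1+\tfrac{n-2}{2}\eps\ln\delta+O(\eps^2\ln^2\delta)$ and $\irn U^{p+1-\eps}=S^{n/2}-\eps\irn U^{p+1}\ln U+O(\eps^2)$, the latter integral being finite by the power decay of $U$. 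Substituting $\ln\delta=\ln d+\tfrac{1}{n}\ln\eps$ produces the $\mathfrak a$, $\mathfrak b\eps$, $\mathfrak c\eps\ln\eps$ and $-\mathfrak c_2\eps\ln d$ contributions, with $\mathfrak a=S^{n/2}/n$ coming from $\tfrac12-\tfrac{1}{p+1}=\tfrac{1}{n}$ and $\mathfrak c_2=\tfrac{(n-2)^2 S^{n/2}}{4n}>0$. The $\psi_\o$ contribution arises from the complement integral: since $\dist(\xi,\partial\o)$ is bounded below uniformly for $\xi$ in the projection of $\overline{\t}$, dominated convergence applied to $\delta^{-n}U_{\delta,\xi}^{p+1}(x)=\alpha_n^{p+1}(\delta^2+|x-\xi|^2)^{-n}\to\alpha_n^{p+1}|x-\xi|^{-2n}$ yields
\[
\int_{\rn\setminus\o}U_{\delta,\xi}^{p+1}\d x=\alpha_n^{p+1}\delta^n\psi_\o(\xi)(1+o(1)),
\]
and substituting $\delta^n=d^n\eps$ gives the term $\mathfrak c_1 d^n\psi_\o(\xi)\eps$ with $\mathfrak c_1=\tfrac{(n-2)\alpha_n^{p+1}}{n}>0$; the replacement of $U_{\delta,\xi}^{p+1-\eps}$ by $U_{\delta,\xi}^{p+1}$ in this integral costs only $o(\eps)$ by estimates in the spirit of \eqref{eq:iii}--\eqref{lol}. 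Collecting everything yields the pointwise version of \eqref{em1}.

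The main obstacle is upgrading this to a $\cC^1$-expansion uniform on $\overline{\t}$. For this I would differentiate $\tilde J_\eps(d,\xi)=J_\eps(U_{\delta,\xi}+\phi_{\delta,\xi})$ in $(d,\xi)$ via the chain rule, using that $(d,\xi)\mapsto\phi_{\delta,\xi}$ is $\cC^1$ by Proposition~\ref{rid1}. Differentiating the fixed-point equation $\phi_{\delta,\xi}=T_{\delta,\xi}(\phi_{\delta,\xi})$ implicitly, exploiting the invertibility of $\mathrm{id}-T'_{\delta,\xi}$ established there, and using the bound \eqref{eq:norm phi} on $\delta\,\Pi_{\delta,\xi}\partial_{\xi_i}\phi_{\delta,\xi}$ (and the analogous bound for $\partial_d$), I can control $\partial_d\phi_{\delta,\xi}$ and $\partial_{\xi_i}\phi_{\delta,\xi}$ well enough to make the derivatives of the remainder term also $o(\eps)$. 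Since the $(d,\xi)$-dependent leading part $\mathfrak c_1 d^n\psi_\o(\xi)-\mathfrak c_2\ln d$ is smooth, $\psi_\o$ is smooth on the projection of $\overline{\t}$ into $\o$, and $\overline{\t}$ is compact, all pointwise bounds are uniform. I expect the genuinely delicate step to be this simultaneous control of derivatives of the Taylor remainders in $\eps$ and of the implicit function $\phi_{\delta,\xi}$ in $(d,\xi)$, which must be carried out term by term.
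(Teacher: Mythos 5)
Your argument is correct and follows essentially the same approach as the paper: absorb the error $\phi_{\delta,\xi}$ into $o(\eps)$ using the bound from Proposition~\ref{rid1}, Taylor-expand $J_\eps(U_{\delta,\xi})$ via the scaling identity $\irn U_{\delta,\xi}^{p+1-\eps}=\delta^{(n-2)\eps/2}\irn U^{p+1-\eps}$ and the Ansatz $\delta=d\eps^{1/n}$, and extract $\psi_\Omega$ from the complement integral. Your packaging of the $\phi$-reduction through $J'_\eps(U_{\delta,\xi})[\phi_{\delta,\xi}]=\langle U_{\delta,\xi}-i^*[Q_\Omega f_\eps(U_{\delta,\xi})],\phi_{\delta,\xi}\rangle$ plus a quadratic remainder is equivalent to the paper's termwise expansion in \eqref{eq:J1}--\eqref{eq:J4}, and your constants $\mathfrak a=\tfrac1nS^{n/2}$, $\mathfrak c_1=\tfrac{(n-2)\alpha_n^{p+1}}{n}$, and $\mathfrak c_2=\tfrac{(n-2)^2}{4n}S^{n/2}$ agree with the paper's $\tfrac{2\alpha_n^{p+1}}{p+1}$ and $\tfrac{n}{(p+1)^2}\irn U^{p+1}$.
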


\begin{proof}
Set $\phi:=\phi_{\delta,\xi}$. As $U_{\delta,\xi}$ solves \eqref{eq:yamabe} we have that
 \begin{align} \label{eq:J1}
|J_\eps(U_{\delta,\xi}+\phi)-J_\eps(U_{\delta,\xi})|&=\frac{1}{2}\|\phi\|^2+\irn\nabla U_{\delta,\xi}\cdot\nabla\phi-\frac{1}{p+1-\eps}\irn Q_\o\Big(|U_{\delta,\xi}+\phi|^{p+1-\eps}-U_{\delta,\xi}^{p+1-\eps}\Big)\nonumber \\
&\leq \frac{1}{2}\|\phi\|^2+\irn(1-Q_\o)U_{\delta,\xi}^p\phi + \irn Q_\o(U_{\delta,\xi}^p - U_{\delta,\xi}^{p-\eps})\phi \nonumber\\
&\qquad -\frac{1}{p+1-\eps}\irn Q_\o\Big(|U_{\delta,\xi}+\phi|^{p+1-\eps}-U_{\delta,\xi}^{p+1-\eps}-(p+1-\eps)U_{\delta,\xi}^{p-\eps}\phi\Big).
\end{align}
Since $\|\phi\|_E\leq c\eps^\frac{n+2}{2n}$ and $|{\mathds 1}_{\mathbb R^n\setminus\Omega}U_{\delta,\xi}|_{p+1}=C\delta^\frac{n-2}{2}=C\eps^\frac{n-2}{2n}$, Hölder's inequality yields
\begin{equation}\label{eq:J2}
\irn (1-Q_\Omega)U_{\delta,\xi}^p|\phi|\leq C|{\mathds 1}_{\mathbb R^n\setminus\Omega}U_{\delta,\xi}|_{p+1}^p|\phi|_{p+1}\leq C\eps^\frac{n+2}{n}.
\end{equation}
Using \eqref{eq:iii} and Hölder's inequality,
\begin{equation}\label{eq:J3}
\irn |U_{\delta,\xi}^p - U_{\delta,\xi}^{p-\eps}||\phi| \leq |U_{\delta,\xi}^p - U_{\delta,\xi}^{p-\eps}|_\frac{2n}{n+2}|\phi|_{p+1}\leq C\eps|\ln\eps||\phi|_{p+1}\leq C\eps^\frac{n+2}{n}.
\end{equation}
Now, by the mean value theorem there is $t\in(0,1)$ such that
\begin{align*}
&\frac{1}{p+1-\eps}\Big||U_{\delta,\xi}+\phi|^{p+1-\eps}-U_{\delta,\xi}^{p+1-\eps}-(p+1-\eps)U_{\delta,\xi}^{p-\eps}\phi\Big|=\Big||U_{\delta,\xi}+t\phi|^{p-1-\eps}(U_{\delta,\xi}+t\phi)-U_{\delta,\xi}^{p-\eps}\Big|\,|\phi| \\
&\qquad\leq\Big||U_{\delta,\xi}+t\phi|^{p-1-\eps}(U_{\delta,\xi}+t\phi)-U_{\delta,\xi}^{p-\eps}-(p-\eps)U_{\delta,\xi}^{p-1-\eps}t\phi\Big|\,|\phi| + (p-\eps)U_{\delta,\xi}^{p-1-\eps}t\phi^2
\end{align*}
and, by Lemma~\ref{lem:li},
\begin{align*}
\big||U_{\delta,\xi}+t\phi|^{p-1-\eps}(U_{\delta,\xi}+t\phi)-U_{\delta,\xi}^{p-\eps}-(p-\eps)U_{\delta,\xi}^{p-1-\eps}t\phi\big|\leq
\begin{cases}
C|\phi|^{p-\eps} &\text{if \ }1\leq p-\eps<2,\\
C(|\phi|^{p-\eps}+U_{\delta,\xi}^{p-2-\eps}\phi^2) & \text{if \ }2\leq p-\eps.
\end{cases}
\end{align*}
So, Hölder's inequality yields that
\begin{align}\label{eq:J4}
&\frac{1}{p+1-\eps}\irn\Big||U_{\delta,\xi}+\phi|^{p+1-\eps}-U_{\delta,\xi}^{p+1-\eps}-(p+1-\eps)U_{\delta,\xi}^{p-\eps}\phi\Big|\nonumber\\
&\qquad\leq 
\begin{cases}
C\irn \Big(|\phi|^{p+1-\eps} + U_{\delta,\xi}^{p-1-\eps}|\phi|^2\Big)\leq C(\|\phi\|_E^{p+1-\eps}+\|\phi\|_E^2)&\text{if \ }1\leq p-\eps<2,\\
C\irn \Big(|\phi|^{p+1-\eps} + U_{\delta,\xi}^{p-2-\eps}|\phi|^3 + U_{\delta,\xi}^{p-1-\eps}|\phi|^2\Big)\leq C(\|\phi\|_E^{p+1-\eps}+\|\phi\|_E^3+\|\phi\|_E^2)&\text{if \ }2\leq p-\eps,
\end{cases} \nonumber\\
&\qquad\leq C\eps^\frac{n+2}{n}
\end{align}
for $\eps$ small enough. It follows from \eqref{eq:J1}, \eqref{eq:J2}, \eqref{eq:J3}, and \eqref{eq:J4} that
\begin{equation} \label{ee1}
J_\eps(U_{\delta,\xi}+\phi)=J_\eps(U_{\delta,\xi})+ O(\eps^{\frac{n+2}{n}}).
\end{equation}
Next, we give an expansion for $J_\eps\left(U_{\delta,\xi} \right)$. Note that
 \begin{align} \label{ee2}
 J_\eps(U_{\delta,\xi})&=J_0(U_{\delta,\xi}) + \frac{1}{p+1}\irn|U_{\delta,\xi}|^{p+1}-\frac{1}{p+1-\eps}\irn |U_{\delta,\xi}|^{p+1-\eps} + \frac{1}{p+1-\eps}\irn (1-Q_\o)|U_{\delta,\xi}|^{p+1-\eps} \notag\\ 
 &=\underbrace{J_0(U_{\delta,\xi})}_{=:\mathfrak a} + \frac{1}{p+1}\irn|U_{\delta,\xi}|^{p+1}-\frac{1}{p+1-\eps}\irn |U_{\delta,\xi}|^{p+1-\eps} + \frac{2}{p+1}\int_{\rn\setminus\o}|U_{\delta,\xi}|^{p+1} \notag \\
 &\qquad + \frac{2}{p+1-\eps}\int_{\rn\setminus\o}|U_{\delta,\xi}|^{p+1-\eps} - \frac{2}{p+1}\int_{\rn\setminus\o}|U_{\delta,\xi}|^{p+1}.
\end{align}
Using a Taylor expansion we write
\begin{align} \label{ee3}
\frac1{p+1-\eps}\int_{\mathbb R^n}| U_{\delta,\xi}|^{p+1-\eps}&=\frac1{p+1}\int_{\mathbb R^n}| U_{\delta,\xi}|^{p+1}+\eps\left(\frac1{(p+1)^2}\int_{\mathbb R^n}| U_{\delta,\xi}|^{p+1}-\frac1{p+1}\int_{\mathbb R^n}| U_{\delta,\xi}|^{p+1}\ln U_{\delta,\xi}\right)+O(\eps^2).
\end{align}
Setting \ $x-\xi=\delta y$ \ and recalling that \ $\delta=d\eps^\frac{1}{n}$,
\begin{align*}
&\frac1{(p+1)^2}\int_{\mathbb R^n}| U_{\delta,\xi}|^{p+1}-\frac1{p+1}\int_{\mathbb R^n}| U_{\delta,\xi}|^{p+1}\ln U_{\delta,\xi} = \frac1{(p+1)^2}\int_{\mathbb R^n}  U^{p+1}- \frac1{p+1}\int_{\mathbb R^n} U^{p+1}\left(\ln U-\frac{n-2}2\ln\delta\right)\\
&\qquad=\underbrace{\frac1{(p+1)^2}\int_{\mathbb R^n}  U^{p+1}- \frac1{p+1}\int_{\mathbb R^n} U^{p+1} \ln U}_{=:-\mathfrak b}+\underbrace{\frac{1}{(p+1)^2}\int_{\mathbb R^n} U^{p+1}}_{=:-\mathfrak c} \ln \eps+\underbrace{\frac{n}{(p+1)^2} \int_{\mathbb R^n} U^{p+1}}_{=:\mathfrak c_2} \ln d.
\end{align*}
Hence,
\begin{align}\label{ee4}
\frac{1}{p+1}\irn|U_{\delta,\xi}|^{p+1}-\frac{1}{p+1-\eps}\irn |U_{\delta,\xi}|^{p+1-\eps} =\eps\left(\mathfrak b + \mathfrak c\ln \eps -\mathfrak c_2\ln d\right)+O(\eps^2).
\end{align}
A straightforward computation shows that
 \begin{align}\label{ee5}
\frac{2}{p+1}\int_{\rn\setminus\o}|U_{\delta,\xi}|^{p+1}=\delta^n\Big[\frac{2\alpha_n^{p+1}}{p+1}\int_{\mathbb R^n\setminus\Omega}\frac 1{|x-\xi|^{2n}}\d x+o(1)\Big]=\eps d^n\underbrace{\frac{2\alpha_n^{p+1}}{p+1}}_{=:\mathfrak c_1}\int_{\mathbb R^n\setminus\Omega}\frac{1}{|x-\xi|^{2n}}\d x + o(\eps).
 \end{align}
Finally, a Taylor expansion as in \eqref{ee3} and a simple computation yield
\begin{align}\label{ee6}
\frac2{p+1-\eps}\int_{\rn\setminus\o}| U_{\delta,\xi}|^{p+1-\eps}-\frac2{p+1}\int_{\rn\setminus\o}| U_{\delta,\xi}|^{p+1}=\eps\, o(1)=o(\eps).
\end{align}
From \eqref{ee1}, \eqref{ee2}, \eqref{ee4}, \eqref{ee5}, and \eqref{ee6} we derive
\begin{align*}
\tilde J_\eps(d,\xi)=\mathfrak a+\mathfrak b \eps+\mathfrak c \eps\ln\eps+\eps\Psi(d,\xi)+o(\eps),
\end{align*}
for all $(d,\xi)\in\t$ and $\eps>0$ small enough, with
\begin{equation*}
\Psi(d,\xi):=\mathfrak c_1 d^n\left(\ \int_{\mathbb R^n\setminus \Omega}\frac1{|x-\xi|^{2n}}\d x\right)-\mathfrak c_2\ln d,
\end{equation*}
$\cC^0$-uniformly in $\overline{\t}$. 

To show that the expansion holds true $\cC^1$-uniformly in $\overline{\t}$, note that, by \eqref{eq:J'K perp},
\begin{align*}
\partial_{\xi_i}J_\eps(U_{\delta,\xi} + \phi_{\delta,\xi})-\partial_{\xi_i}J_\eps(U_{\delta,\xi})=(J'_\eps(U_{\delta,\xi} + \phi_{\delta,\xi}) - J'_\eps(U_{\delta,\xi}))[\partial_{\xi_i}U_{\delta,\xi}] + J'_\eps(U_{\delta,\xi} + \phi_{\delta,\xi})[\Pi_{\delta,\xi}\partial_{\xi_i}\phi_{\delta,\xi}].
\end{align*}
Now, using \eqref{eq:norm phi}, one can estimate this difference and the similar one for $\partial_{\delta}$; see, for instance, \cite[Section 7]{gmp}.
\end{proof}

Finally, to guarantee that the solutions that we find do not change sign, we use a variational argument.  The nontrivial solutions to the problem \eqref{eq:problem+} are the critical points of the functional
$$J_q(u):=\frac{1}{2}\|u\|^2-\frac{1}{q+1}\irn Q_\o|u|^{q+1}$$
on the Nehari manifold
$$\cN_q:=\Big\{u\in E:u\neq 0, \ \|u\|^2=\irn Q_\o|u|^{q+1}\Big\}.$$
Set
$$c_q:=\inf_{u\in\cN_q}J_q(u)=\inf_{u\in\cN_q}\frac{q-1}{2(q+1)}\|u\|^2.$$

\begin{lemma}\hspace{2em}\label{lem:positive solution}
\begin{itemize}
\item[$(i)$] $\lim_{q\to p}c_q=\frac{1}{n}S^\frac{n}{2}$, where $S$ is the best constant for the Sobolev embedding $D^{1,2}(\rn)\hookrightarrow L^{2^*}(\rn)$.
\item[$(ii)$] If $u$ is a nontrivial solution to \eqref{eq:problem+} and $J_q(u)<2c_q$, then $u$ is strictly positive or strictly negative in $\rn$.
\end{itemize}
\end{lemma}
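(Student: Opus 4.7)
The plan is to prove (i) by sandwiching $c_q$ between matching upper and lower bounds as $q\to p$, and (ii) by a standard Nehari splitting combined with the strong maximum principle.

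For the upper bound in (i), any $\psi\in H^1_0(\Omega)$ extended by zero to $\rn\setminus\Omega$ belongs to $E$, with $\|\psi\|^2=\int_\Omega|\nabla\psi|^2$ and $\int_{\rn}Q_\Omega|\psi|^{q+1}=\int_\Omega|\psi|^{q+1}$. Hence the Nehari manifold of the pure subcritical Dirichlet problem $-\Delta v=|v|^{q-1}v$ on $\Omega$ embeds into $\cN_q$, and its least energy level $c_q^\Omega$ satisfies $c_q\leq c_q^\Omega$. It is classical (Brezis--Nirenberg type test-function computation with a bubble $U_{\delta,\xi}$ concentrating at an interior point of $\Omega$) that $c_q^\Omega\to\frac{1}{n}S^{n/2}$ as $q\to p$. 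For the matching lower bound, any $u\in\cN_q$ satisfies, by Hölder's inequality on the bounded set $\Omega$ (using $q+1<2^*$) followed by the Sobolev embedding,
\begin{equation*}
\|u\|^2=\int_{\rn}Q_\Omega|u|^{q+1}\leq \int_\Omega|u|^{q+1}\leq |\Omega|^{1-(q+1)/2^*}\,|u|_{2^*}^{q+1}\leq |\Omega|^{1-(q+1)/2^*}S^{-(q+1)/2}\|u\|^{q+1},
\end{equation*}
so that $\|u\|^{q-1}\geq |\Omega|^{(q+1)/2^*-1}S^{(q+1)/2}$. Raising to the power $2/(q-1)$ and letting $q\to p$ (the exponent of $|\Omega|$ vanishes and $(q+1)/(q-1)\to n/2$) yields $\liminf\|u\|^2\geq S^{n/2}$ uniformly on $\cN_q$, whence $\liminf c_q\geq \frac{1}{n}S^{n/2}$ and the claim follows.

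For (ii), assume by contradiction that $u$ changes sign, so that $u=u^+-u^-$ with $u^\pm\in E\setminus\{0\}$ non-negative and supported on disjoint sets. Testing the equation against $u^+$ and against $-u^-$, and using the pointwise identities $\nabla u\cdot\nabla u^+=|\nabla u^+|^2$ and $|u|^{q-1}u\cdot u^+=|u^+|^{q+1}$ (with the analogous signed identities for $u^-$), one obtains
\begin{equation*}
\|u^+\|^2=\int_{\rn}Q_\Omega|u^+|^{q+1}\qquad\text{and}\qquad \|u^-\|^2=\int_{\rn}Q_\Omega|u^-|^{q+1},
\end{equation*}
so that $u^\pm\in\cN_q$. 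Disjointness of the supports then gives $J_q(u)=J_q(u^+)+J_q(u^-)\geq 2c_q$, contradicting the assumption. Hence $u$ is signed on $\rn$; say $u\geq 0$. Since $Q_\Omega\geq -1$, we have $-\Delta u+u^q\geq 0$ on $\rn$, and standard elliptic regularity provides $u\in L^\infty_{\mathrm{loc}}(\rn)$, so the zero-order coefficient $u^{q-1}$ is locally bounded; the strong maximum principle then forces $u>0$ throughout $\rn$.

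The only delicate step I expect is the classical estimate $c_q^\Omega\to\frac{1}{n}S^{n/2}$, whose proof rests on a careful test-function computation with a concentrating bubble inside $\Omega$; the lower bound in (i) and the sign-change contradiction in (ii) are rather direct consequences of Sobolev/Hölder estimates and the splitting of $J_q$ across disjoint supports.
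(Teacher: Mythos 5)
Your proposal is correct and follows essentially the same route as the paper: for (i), the lower bound via H\"older on $\Omega$ plus Sobolev, and the upper bound by comparing $c_q$ with the least Dirichlet energy $c_{q,\Omega}$ on $\Omega$ and invoking the classical fact (the paper cites Benci--Cerami, Lemma 4.1) that $c_{q,\Omega}\to\frac{1}{n}S^{n/2}$; for (ii), ruling out sign changes via the Nehari splitting $u^\pm\in\cN_q$ (which the paper delegates to a reference) and then deducing strict positivity from elliptic regularity and the strong maximum principle applied to an equation of the form $-\Delta u + c(x)u\geq 0$ with $c\geq 0$ locally bounded.
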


\begin{proof}
$(i):$ \ If $u\in\cN_q$ then, using Hölder's and Sobolev's inequalities, we get
\begin{align*}
\|u\|^2&=\irn Q_\o|u|^{q+1}\leq \io|u|^{q+1}\leq |\o|^\frac{p-q}{p+1}\Big(\io|u|^{p+1}\Big)^\frac{q+1}{p+1}\leq |\o|^\frac{p-q}{p+1}\Big(\irn|u|^{p+1}\Big)^\frac{q+1}{p+1} \leq |\o|^\frac{p-q}{p+1}S^{-\frac{q+1}{2}}\|u\|^{q+1}.
\end{align*}
Therefore,
$$\frac{q-1}{2(q+1)}S^\frac{q+1}{q-1}|\o|^{-\frac{p-q}{p+1}\frac{2}{q-1}}\leq \frac{q-1}{2(q+1)} \|u\|^2=J_q(u)\qquad\text{for every \ }u\in\cN_q.$$
As a consequence, $\frac{1}{n}S^\frac{n}{2}\leq \liminf\limits_{p\to q}c_q.$ Next, observe that 
$$c_q=\inf_{u\in\cN_q}J_q(u)\leq\inf_{u\in\cN_q\cap D^{1,2}_0(\o)}J_q(u)=:c_{q,\o}.$$
Note that $c_{q,\o}$ is the least energy of a nontrivial solution to the Dirichlet problem $-\Delta u=|u|^{q-1}u$, \ $u\in D^{1,2}_0(\o)$. It is well known that $\lim_{q\to p}c_{q,\o}=\frac{1}{n}S^\frac{n}{2}$; see \cite[Lemma 4.1]{bc}. Therefore, $\limsup\limits_{p\to q}c_q\leq \frac{1}{n}S^\frac{n}{2}$. This proves $(i)$.

$(ii):$ \ A well known argument shows that, if $u$ is a nontrivial solution to \eqref{eq:problem+} and $J_q(u)<2c_q$, then $u$ does not change sign; see, for instance, \cite[Proof of Theorem A]{bc}. Assume that $u\geq 0$. By standard elliptic regularity (see \cite[Lemma 3.3]{CHS24}), $u\in W^{2,r}_{loc}(\rn)\cap \cC^{1,\alpha}_{loc}(\rn)$ for all $r\in[1,\infty)$ and $\alpha\in(0,1)$. Then, the strong maximum principle for strong solutions \cite[Theorem 9.6]{gt} applied to the equation $-\Delta u + {\mathds 1}_{\mathbb R^n\setminus\Omega} u^q = {\mathds 1}_\Omega u^q$
yields that $u>0$ in $\rn$.
\end{proof}
\smallskip

We are ready to prove Theorems~\ref{main1} and \ref{main11}.

\begin{proof}[Proof of Theorem~\ref{main1}]
Note that $\lim_{\xi\to\partial\Omega}\psi_\Omega(\xi)=\infty.$ Therefore, the function $\Psi_\o$, defined in \eqref{PSI}, has a global minimum point $\xi_0$, and $(d_0,\xi_0)$ is a global minimum of $\Psi$ for some $d_0>0$. By Proposition~\ref{red-en} and Lemma~\ref{iff:lem}, the reduced energy $\widetilde J_\eps$ has a critical point which gives rise to a solution $u_\eps$ of problem \eqref{eq:p} for $\eps$ small enough. Furthermore, $J_\eps(u_\eps)\to\mathfrak{a}:=\frac{1}{n}S^\frac{n}{2}$ as $\eps\to 0$. So, by Lemma~\ref{lem:positive solution}, $u_\eps$ is positive in $\rn$ for $\eps$ small enough.
\end{proof}
\smallskip

\begin{proof}[Proof of Theorem~\ref{main11}]
If $\xi_0$ is a nondegenerate critical point of $\psi_\Omega$, then $\Psi$ has a nondegenerate critical point. As nondegenerate critical points are stable under $\cC^1$-perturbations, the reduced energy $\widetilde J_\eps$ has a critical point and, arguing as in the proof of Theorem~\ref{main1}, we obtain the result.
\end{proof}
\smallskip

\begin{remark}
\emph{One could also consider the existence of solutions in the \emph{supercritical} regime, namely, when $\eps<0$ in \eqref{eq:p}. In this case, the main term of the reduced energy (see \eqref{em1} and \eqref{PSI}) would be
$\Psi(d,\xi)=c_1d ^n\psi_\Omega(\psi)+c_2\ln d$
which does not have any critical points in $d$. This suggests that the supercritical case does not have solutions with a single-bubble profile. On the other hand, inspired by \cite{dpfm}, it  would be interesting to study the existence of solutions in a domain with a hole using two (or more) bubbles as profiles. We leave this as an open problem.}
\end{remark}

\begin{remark} \label{rate:rem}\emph{(On how to find the Ansatz for the concentration rate)
Following the estimates in Proposition \ref{red-en} without using the Ansatz \eqref{para}, we would obtain that the leading terms for the expansion of $J(U_{\delta,\xi})$ are
\begin{align}\label{lt}
 H(\delta):=c_1-\eps(c_2+c_3\ln \delta)+c_4 \delta^n \psi_\Omega(\xi)
\end{align}
for some constants $c_1,c_2,c_3,c_4$ with $c_3,c_4>0.$ Recall that the concentration rate $\delta$ depends on $\eps$ and $\delta(\eps)\to 0$ as $\eps\to 0$ (as a consequence of the nonexistence of solutions to the limit problem). To implement the Lyapunov-Schmidt method, we search for suitable parameters that can yield critical points of \eqref{lt} that are bounded away from zero and are stable under uniform perturbations. This justifies the Ansatz $\delta = d \eps^\frac{1}{n}$ which gives the function
\begin{align*}
 h(d):=H(d \eps^\frac{1}{n})
 =c_1-\eps
 \left(
 c_2+c_3(\ln d + \frac{1}{n}\ln\eps)
 \right)+c_4 d^\frac{1}{n}\eps \psi_\Omega(\xi)
 =\eps \left(c_4 d^\frac{1}{n}\psi_\Omega(\xi)-c_3\ln d
 \right)+r(\eps)
\end{align*}
for some function $r$ that depends only on $\eps$. The Ansatz managed to join the summand $-\eps(c_2+c_3\ln \delta)$ and $c_4 \delta^n \psi_\Omega(\xi)$ to produce the function $c_4 d^\frac{1}{n}\psi_\Omega(\xi)-c_3\ln d$ that has a global minimum (in $d$).
}
\end{remark}

\section{The slightly subcritical problem: existence of nodal solutions}\label{sch:sec}

In this section we prove Theorem~\ref{main2}. The proof follows similar steps as in the previous section (see also \cite{ackclpi,bamipi} where similar estimates to ones we present here can be found).

As before, we rewrite \eqref{eq:p} as
\begin{equation}\label{p2n}
  u-i^* \left[Q_\Omega  f_\eps (u)\right]=0,\qquad   u\in E,\qquad f_\eps(u):=|u|^{{p-1}-\eps}u.
\end{equation}
We build a solution \eqref{p2} with the Ansatz $U_{\delta_1,\xi_1}-U_{\delta_1,\xi_2}+\phi,$ namely, two bubbles with opposite signs plus a small error. Considering the leading terms in the expansion of the reduced energy (see Proposition~\ref{red-en-n}), we make the following choices for the variables involved in the Ansatz. We assume that the concentration parameters $\delta_1$ and $\delta_2$ satisfy that
 \begin{equation}\label{dn}
 \delta_j
:=d_j\delta,\ \hbox{with}\ \delta:=\eps^\frac 1{n-2},\   d_j\in(0,\infty), \hbox{ and }j=1,2,  \end{equation}
and the concentration points are such that
\begin{equation}\label{xin}
\begin{aligned}
& \xi_j=\eta_j+\tau_j\nu_j\in\Omega,\ \hbox{where}\ \eta_j\in\partial\Omega,\ \nu_j\ \hbox{is the inner normal at $\eta_j$,}\\
&
 \tau_j=t_j\tau,\ \hbox{with}\ \tau:=\eps^\frac{2}{(n-2)(n+1)}\ \hbox{and}\   t_j\in(0,\infty).
 \end{aligned}
 \end{equation}
This implies that the concentration points $\xi_1$ and $\xi_2$ are inside $\Omega$ and are close to the boundary. Moreover, we assume that the concentration rate of each bubble is faster than the rate at which the concentration points approach the boundary of $\Omega,$ i.e.
\begin{equation}\label{dt}
{\frac{\delta}{\tau}}=\eps^{\frac{n-1}{(n-2)(n+1)}}\to 0\qquad \text{ as }\eps\to 0.
\end{equation}

Set
\begin{equation*}
K:=K_{\delta_1,\xi_1}\oplus K_{\delta_2,\xi_2},
\end{equation*}
where $K_{\delta,\xi}$ is given in \eqref{k}.

Recall the definition of $Z_{\delta_j,\xi_j}^{i}$ given in \eqref{z:def}.  Consider the orthogonal complement of $K$ in $D^{1,2}(\rn)$, namely,
\begin{equation*}
K^\perp:=\left\{u\in D^{1,2}(\mathbb R^n):\langle Z_{\delta_j,\xi_j}^{i},u\rangle=0,\ i=0,1,\dots,n;\ j=1,2\right\},
\end{equation*}
and the orthogonal projections in $D^{1,2}(\rn)$,
\begin{align*}
\Pi:D^{1,2}(\mathbb R^n)\to K\quad\text{ and }\quad
\Pi^\perp:D^{1,2}(\mathbb R^n)\to K^\perp.
\end{align*}
Arguing as in Lemma~\ref{lem:projections}, we also have that
\begin{align}\label{bdproy}
\|\Pi u\|_E\leq C \|u\|_E\quad\text{ and }\quad \|\Pi^\perp u\|_E\leq C \|u\|_E\quad\text{ for all }u\in E
\end{align}
and for some uniform constant $C>0$ with respect to $\eps.$

 We observe that $K$ and $\Pi$ depend on the free parameters $(d_1,d_2,t_1,t_2,\eta_1,\eta_2)\in (0,\infty)^4\times D_2\Omega$,
 where 
 $$D_2\Omega:=\left(\partial\Omega\times\partial\Omega\right) \setminus\{\xi_1=\xi_2\},$$
however we omit the dependencies for the sake of simplicity.
We also set
\begin{align*}
U_j:=U_{\delta_j,\xi_j},\ j=1,2.
\end{align*}
As in the previous section, \eqref{p2n} is equivalent to the system
\begin{equation}\label{sn}
\left\{\begin{aligned}\Pi ^\perp\left\{   U_1- U_2+\phi-i^* \left[Q_\Omega  f_\eps \left(  U_1- U_2+\phi\right)\right]\right\}&=0,\\
 \Pi \left\{   U_1- U_2+\phi-i^* \left[Q_\Omega  f_\eps \left(  U_1- U_2+\phi\right)\right]\right\}&=0.
\end{aligned}\right.
\end{equation}

We begin with an auxiliary lemma.  Recall that $\Omega$ is a smooth bounded subset of $\rn$.

\begin{lemma}\label{half:lemma}
 Let $\alpha>0$ and $\beta\geq 1$ so that $\alpha\beta>\frac{n}{n-2}$, then there is $C>0$ independent of $\eps>0$ so that
 \begin{align*}
|{\mathds 1}_{\mathbb R^n\setminus\Omega}|U_1-U_2|^\alpha|_\beta
  \leq C \delta^{\frac{n-2}{2}\alpha}\tau^{\frac{n}{\beta}-\alpha(n-2)}
 \end{align*}
 with $\delta$ and $\tau$ as in \eqref{dn} and \eqref{xin}.
\end{lemma}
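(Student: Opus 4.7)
The plan is to reduce the estimate to one involving a single bubble and then exploit the explicit formula of $U_{\delta,\xi}$ together with the geometry near $\partial\Omega$. Since $U_1,U_2>0$, for any $\alpha>0$ we have the pointwise bound $|U_1-U_2|^\alpha\le C_\alpha(U_1^\alpha+U_2^\alpha)$ (by convexity if $\alpha\ge 1$ and by subadditivity of $t\mapsto t^\alpha$ if $0<\alpha<1$). By the triangle inequality in $L^\beta$, it therefore suffices to establish, for each $j\in\{1,2\}$, the bound
\begin{equation*}
|{\mathds 1}_{\mathbb R^n\setminus\Omega}\,U_j^{\alpha}|_\beta \;\le\; C\,\delta^{\frac{n-2}{2}\alpha}\,\tau^{\frac{n}{\beta}-\alpha(n-2)}.
\end{equation*}

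For a fixed $j$, substitute $y=(x-\xi_j)/\delta_j$ in the integral. Using the explicit form of $U_j$, one obtains
\begin{equation*}
\int_{\mathbb R^n\setminus\Omega}U_j^{\alpha\beta}\d x
\;=\;\alpha_n^{\alpha\beta}\,\delta_j^{\,n-\frac{\alpha\beta(n-2)}{2}}\int_{A_j}\frac{\d y}{(1+|y|^2)^{\alpha\beta(n-2)/2}},
\end{equation*}
where $A_j:=\{y\in\mathbb R^n:\xi_j+\delta_j y\notin\Omega\}$. The key geometric input is that $\xi_j=\eta_j+\tau_j\nu_j$ with $\eta_j\in\partial\Omega$ and $\nu_j$ the inner normal; since $\partial\Omega$ is smooth, for $\eps$ small enough $\tau_j$ lies inside a uniform tubular neighborhood of $\partial\Omega$, and hence $\mathrm{dist}(\xi_j,\mathbb R^n\setminus\Omega)=\tau_j$. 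Consequently, $A_j\subset\{|y|\ge \tau_j/\delta_j\}$, and by \eqref{dt} the ratio $\tau_j/\delta_j=(t_j/d_j)(\tau/\delta)$ diverges as $\eps\to 0$.

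Next, the hypothesis $\alpha\beta>\frac{n}{n-2}$ is exactly what makes the tail integral converge with the right rate: for $R\ge 1$,
\begin{equation*}
\int_{|y|\ge R}\frac{\d y}{(1+|y|^2)^{\alpha\beta(n-2)/2}}\;\le\;C\int_R^\infty r^{n-1-\alpha\beta(n-2)}\d r\;=\;C\,R^{\,n-\alpha\beta(n-2)}.
\end{equation*}
Applying this with $R=\tau_j/\delta_j$ and combining exponents yields
\begin{equation*}
\int_{\mathbb R^n\setminus\Omega}U_j^{\alpha\beta}\d x
\;\le\;C\,\delta_j^{\,n-\frac{\alpha\beta(n-2)}{2}}\Bigl(\frac{\tau_j}{\delta_j}\Bigr)^{n-\alpha\beta(n-2)}
\;=\;C\,\delta_j^{\frac{\alpha\beta(n-2)}{2}}\,\tau_j^{\,n-\alpha\beta(n-2)}.
\end{equation*}
Taking the $\beta$-th root and recalling $\delta_j=d_j\delta$, $\tau_j=t_j\tau$, produces the desired inequality with a constant that depends on $n,\alpha,\beta$ and on the (bounded) ranges of the parameters $d_j,t_j$, but is uniform in $\eps$.

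The proof is essentially a bookkeeping exercise; the only nontrivial step is the geometric fact $\mathrm{dist}(\xi_j,\partial\Omega)=\tau_j$, which is standard for $C^2$-domains once $\tau_j$ is smaller than the tubular neighborhood radius. I expect the cleanest way to present the argument is to write out the change of variables and the exponent arithmetic explicitly, and then invoke the tubular neighborhood fact once.
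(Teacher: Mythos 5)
Your proof is correct, and it takes a genuinely different route from the paper's. The paper changes variables $x=\eta_i+\tau_i y$ (scaling by $\tau_i$), so that $\frac{\Omega-\eta_i}{\tau_i}$ converges to the half-space $H_i=\{\langle y,\nu_i\rangle\ge 0\}$ and the integral converges to the explicit constant $\int_{\rn\setminus H_i}|y-\nu_i|^{-(n-2)\alpha\beta}\,dy$; this gives a precise leading-order asymptotic, not just an upper bound, which is actually used later in the paper (e.g.\ in \eqref{z4}, where the positive constant $\mathfrak F$ must be identified). You instead rescale by $\delta_j$, reduce the domain to $\{|y|\ge \tau_j/\delta_j\}$ via the tubular-neighborhood identity $\mathrm{dist}(\xi_j,\rn\setminus\Omega)=\tau_j$, and bound the tail of $(1+|y|^2)^{-\frac{(n-2)\alpha\beta}{2}}$ by a power of $\tau_j/\delta_j$. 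Both uses of smoothness of $\partial\Omega$ are legitimate (yours is the interior-ball/tubular-neighborhood fact, the paper's is $C^1$-convergence of the blown-up domain to a half-space), both use \eqref{dt} to ensure $\tau_j/\delta_j\to\infty$, and both need $\alpha\beta>\frac{n}{n-2}$ for the tail integral to converge. Your argument is a bit more elementary and slightly shorter, but proves strictly less than the paper's displayed chain of equalities — which, for the statement of this lemma, is exactly enough.
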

\begin{proof}
Using a change of variables, for $i=1,2$,
\begin{align*}
\int_{\rn\backslash \Omega}|U_i|^{\alpha\beta }
&=\int_{\rn\backslash \Omega}\left|
\frac{\alpha_n \delta_i^{{\frac{n-2}{2}}}}{(\delta_i^2+|x-\eta_i-\tau_i\nu_i|^2)^\frac{n-2}{2}}
\right|^{\alpha\beta }\, dx\\
&=C\delta_i^{{\frac{n-2}{2}}\alpha\beta}\tau_i^{n-\alpha\beta(n-2)} \int_{\rn\backslash (\frac{\Omega-\eta_i}{\tau_i})}\left|
\frac{1}{((\frac{\delta_i}{\tau_i})^2+|y-\nu_i|^2)^\frac{n-2}{2}}
\right|^{\alpha\beta }\, dy\\
&=C\delta_i^{{\frac{n-2}{2}}\alpha\beta}\tau_i^{n-\alpha\beta(n-2)} \left(\int_{\rn\backslash H_i}
\frac{1}{|y-\nu_i|^{(n-2)\alpha\beta }}\, dy+o(1)\right),
\end{align*}
where we used \eqref{dt}, that $\frac{\Omega-\eta_i}{\tau_i}$ goes to $H_i:=\{y\in\mathbb R^n\ :\ \langle y,\nu_i\rangle \ge0\}$ as $\eps\to 0$ (recall that $\Omega$ is smooth), and that $\alpha\beta>\frac{n}{n-2}$. The claim now follows, since
$ |{\mathds 1}_{\mathbb R^n\setminus\Omega}|U_1-U_2|^\alpha|_\beta
 \leq C(|{\mathds 1}_{\mathbb R^n\setminus\Omega}|U_1|^\alpha|_\beta+|{\mathds 1}_{\mathbb R^n\setminus\Omega}|U_2|^\alpha|_\beta).$

\end{proof}

Now, we solve the first equation in \eqref{sn}.

\begin{proposition}\label{rid1n}
For any bounded open subset $\Gamma$ of $(0,\infty)^4\times D_2(\Omega)$ with $\overline{\Gamma}\subset(0,\infty)^4\times D_2(\Omega)$, there exists $\eps_0>0$ such that, for any $\eps\in(0,\eps_0)$ and for any
 $(d_1,d_2,t_1,t_2,\eta_1,\eta_2)\in\Gamma,$
there exists a unique $\phi=\phi(d_1,d_2,t_1,t_2,\eta_1,\eta_2,\eps)\in K^\perp $ which solves the first equation in \eqref{sn} and there is $C>0$ such that
\begin{align*}
\|\phi\|_E&\leq C  \eps^{{\frac{n+2}{2}}{\frac{n-1}{(n-2)(n+1)}}},\quad  \hbox{if}\ n\ge4,\\
\|\phi\|_E&\leq C\eps|\ln \eps|,\hspace{1.5cm} \hbox{if}\ n=3,
\end{align*}
for any $\eps\in(0,\eps_0)$. Moreover,  $\Gamma\ni (d_1,d_2,t_1,t_2,\eta_1,\eta_2)\to\phi(d_1,d_2,t_1,t_2,\eta_1,\eta_2)$ is a $\cC^1$-map.
\end{proposition}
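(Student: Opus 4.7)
The plan is to mirror the strategy of Proposition \ref{rid1}, replacing the single bubble $U_{\delta,\xi}$ by $W:=U_1-U_2$ and the projection $\Pi_{\delta,\xi}^\perp$ by $\Pi^\perp$. I would first recast the first equation in \eqref{sn} as the fixed-point problem $\mathscr{L}(\phi) = \mathscr{N}(\phi) + \mathscr{L}_\eps(\phi) + \mathscr{E}$, where, using $Q_\Omega v = v - 2{\mathds 1}_{\mathbb R^n\setminus\Omega}v$ and $W = i^*[f_0(U_1)-f_0(U_2)]$,
\begin{equation*}
\mathscr{L}(\phi) := \Pi^\perp\bigl[\phi - i^*[(f'_0(U_1) + f'_0(U_2))\phi]\bigr],
\end{equation*}
$\mathscr{N}(\phi)$ collects the superlinear remainder $f_\eps(W+\phi) - f_\eps(W) - f'_\eps(W)\phi$, $\mathscr{L}_\eps(\phi)$ contains both the subcritical defect $f'_\eps(W) - f'_0(U_1) - f'_0(U_2)$ and the two boundary contributions $-2\,{\mathds 1}_{\mathbb R^n\setminus\Omega}\,f'_0(U_j)$, and $\mathscr{E}$ is the residual at $\phi=0$, composed of $f_\eps(W)-f_0(U_1)+f_0(U_2)$ plus the terms $-2\,{\mathds 1}_{\mathbb R^n\setminus\Omega}f_0(U_j)$.

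Second, I would establish the invertibility of $\mathscr{L}$ on $K^\perp\cap L^s(\rn)$ with a uniform bound on $\mathscr{L}^{-1}$. This is the analog of Lemmas~\ref{B1}--\ref{B2} cited in the proof of Proposition \ref{rid1}; the block-diagonal structure is restored in the limit $\eps\to 0$ because the bubbles $U_1,U_2$ concentrate at the distinct points $\eta_1\ne\eta_2$, so the off-diagonal interaction $\int U_1^{p-1}U_2^2\to 0$ and the linearizations decouple. Then I would bound the three pieces. The estimates for $\mathscr{N}(\phi)$ and for the subcritical defect portion of $\mathscr{L}_\eps$ and $\mathscr{E}$ go through verbatim as in Lemma \ref{lem:estimates}, yielding contributions of order $\|\phi\|_E^2+\|\phi\|_E^{p-\eps}$ and $\eps|\ln\eps|$, respectively.

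The genuinely new ingredient is the estimate of the boundary terms ${\mathds 1}_{\mathbb R^n\setminus\Omega}f_0(U_j)$ and ${\mathds 1}_{\mathbb R^n\setminus\Omega}f'_0(U_j)$, where Lemma~\ref{half:lemma} is essential. Applying it with $\alpha=p$, $\beta=\frac{2n}{n+2}$ (so $\alpha\beta=2^*>\frac{n}{n-2}$), and using \eqref{dn}--\eqref{dt}, gives
\begin{equation*}
\bigl|{\mathds 1}_{\mathbb R^n\setminus\Omega}U_j^{p}\bigr|_{\frac{2n}{n+2}} \le C\,\delta^{\frac{n+2}{2}}\tau^{-\frac{n+2}{2}} = C\,(\delta/\tau)^{\frac{n+2}{2}} = C\,\eps^{\frac{n+2}{2}\cdot\frac{n-1}{(n-2)(n+1)}},
\end{equation*}
and analogously for $|\cdot|_{\frac{sn}{n+2s}}$, while the choice $\alpha=p-1$, $\beta=\frac{n}{2}$ provides the corresponding $L^{n/2}$ bound for $f'_0(U_j)$. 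Comparing with the $\eps|\ln\eps|$ bound from the subcritical defect, the dominant error size is $\eps^{\frac{n+2}{2}\cdot\frac{n-1}{(n-2)(n+1)}}$ when $n\ge 4$ and $\eps|\ln\eps|$ when $n=3$; this dichotomy dictates the two cases stated in the proposition.

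Finally, combining the bounds $\|\mathscr{L}_\eps(\phi)\|_E=o(1)\|\phi\|_E$, $\|\mathscr{N}(\phi)\|_E = O(\|\phi\|_E^2)$, and $\|\mathscr{E}\|_E$ of the size above, the map $T := \mathscr{L}^{-1}(\mathscr{N}+\mathscr{L}_\eps+\mathscr{E})$ is a contraction on a ball of radius equal to the claimed bound, and the Banach fixed-point theorem delivers the unique $\phi$. The $\cC^1$ regularity of $(d_1,d_2,t_1,t_2,\eta_1,\eta_2)\mapsto\phi$ follows from the implicit function theorem after rescaling, exactly as at the end of the proof of Proposition \ref{rid1}, with the minor modification that the rescaling must be done on each bubble separately and the smooth dependence on the boundary points $\eta_j$ is reduced to smooth dependence on local charts of $\partial\Omega$. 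The main obstacle I anticipate is the careful bookkeeping of the competing scales $\delta$ and $\tau$ in the boundary integrals, including verifying that the assumption $\overline{\Gamma}\subset(0,\infty)^4\times D_2(\Omega)$ provides the uniformity needed for $\eps_0$ and the constants.
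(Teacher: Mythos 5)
Your plan follows the same Lyapunov--Schmidt scheme as the paper's proof and correctly identifies the key new ingredient: the boundary contributions are controlled by Lemma~\ref{half:lemma} with the competing scales $\delta$ and $\tau$, and your bookkeeping of the exponents is correct, producing $\eps^{\frac{(n+2)(n-1)}{2(n-2)(n+1)}}$ from $(\delta/\tau)^{\frac{n+2}{2}}$ and explaining the dichotomy between $n\geq 4$ and $n=3$ exactly as in the paper. Two comments are in order.

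First, a modular but acceptable difference: you take the linear operator $\mathscr L(\phi):=\Pi^\perp\bigl[\phi-i^*[(f'_0(U_1)+f'_0(U_2))\phi]\bigr]$, while the paper linearizes with $f'_0(U_1-U_2)$ and invokes the invertibility argument of \cite[Lemma 1.7]{mupi}. Both choices are standard and asymptotically equivalent as the bubbles decouple; the price of yours is the extra piece $\bigl(f'_0(U_1-U_2)-f'_0(U_1)-f'_0(U_2)\bigr)\phi$ appearing in $\mathscr L_\eps$, which is of the same cross-interaction type you must handle elsewhere, so this is a legitimate alternative route.

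Second, and more substantively, your claim that the non-boundary portion of $\mathscr E$ ``goes through verbatim as in Lemma~\ref{lem:estimates}, yielding $\eps|\ln\eps|$'' omits the genuine cross-bubble interaction term $f_0(U_1-U_2)-f_0(U_1)+f_0(U_2)$, which has no analogue in the single-bubble case. Writing $f_\eps(W)-f_0(U_1)+f_0(U_2)=\bigl(f_\eps(W)-f_0(W)\bigr)+\bigl(f_0(W)-f_0(U_1)+f_0(U_2)\bigr)$, the first summand is indeed the $\eps|\ln\eps|$ subcritical defect, but the second requires a dedicated estimate (the paper's Lemma~\ref{Lemma:I_1aux}), giving rates $\delta^{n-2}$, $\delta^4|\ln\delta|$ or $\delta^{\frac{n+2}{2}}$ depending on $n$. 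These rates happen to be dominated by the boundary piece in every dimension, so your final bound is unaffected, but without this estimate the decomposition does not close. You should insert the interaction estimate explicitly before invoking the contraction.
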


\begin{proof}
Using that $U_j=i^* \left[ f_0 \left(U_j\right)\right]$ and that $h=Q_\Omega h+2{\mathds 1}_{\mathbb R^n\setminus\Omega} h$ for any $h:\rn\to \r,$ we have that the first equation in \eqref{s} can be rewritten as
\begin{align*}
&\underbrace{\Pi ^\perp\left\{  \phi-i^* \left[   f'_0 \left( U_1- U_2 \right)\phi \right]\right\}}_{=:\mathscr L(\phi)}\\
 &=\underbrace{\Pi ^\perp\circ i^*\left\{  Q_\Omega\left[  f_\eps \left( U_1- U_2+\phi \right) -
 f_\eps \left( U_1- U_2  \right) - f'_\eps \left( U_1- U_2\right)\phi  \right]\right\}}_{=:\mathscr N(\phi)}\\
&+\underbrace{\Pi ^\perp\circ i^*\left\{  Q_\Omega\left[  f'_\eps \left( U_1- U_2 \right)
 - f'_0\left( U_1- U_2\right)  \right]\phi\right\}
 +2\Pi^\perp\circ i^*\left\{  -{\mathds 1}_{\mathbb R^n\setminus\Omega}  f'_0\left( U_1- U_2\right) \phi\right\}}_{=:\mathscr L_\eps (\phi)}\\
&+\underbrace{\Pi ^\perp\circ i^*\left\{  
Q_\Omega\left[  f_\eps \left( U_1- U_2  \right)
 - f_0\left( U_1- U_2\right)  \right]\right\}}_{=:\mathscr E_1}\\ &+\underbrace{\Pi ^\perp\circ i^*\left\{  
f_0\left( U_1- U_2\right)-  f_0\left(U_1\right)+f_0\left(U_2\right)\right\}}_{=:\mathscr E_2}\\ &+2\underbrace{\Pi ^\perp\circ i^*\left\{
 -{\mathds 1}_{\mathbb R^n\setminus\Omega}  f_0\left(U_1-U_2\right)\right\}}_{=:\mathscr E_3}
\end{align*}

The linear operator $\mathscr L:K^\perp_{\delta,\xi}\cap L^s(\rn)\to K^\perp_{\delta,\xi}\cap L^s(\rn)$ given by
\begin{align*}
\mathscr L(\phi) := \Pi ^\perp\left\{  \phi-i^* \left[   f'_0 \left( U_1- U_2 \right)\phi \right]\right\}
\end{align*}
is invertible; this can be seen by adapting the proof of \cite[Lemma 1.7]{mupi} to this setting.

In the following, $C>0$ denotes possibly different constants independent of $\eps$.  The following estimates are standard, they  mostly rely on Lemma~\ref{lem:li} and can be found, for instance, in \cite[Lemma A.2]{bamipi} and \cite[Lemma A.3]{mupi}.

Note that the linear operator ${\mathscr L_\eps}:K^\perp_{\delta,\xi}\cap L^s(\rn)\to K^\perp_{\delta,\xi}\cap L^s(\rn)$ given by
\begin{align*}
 {\mathscr L_\eps}(\phi):=\Pi ^\perp\circ i^*\left\{  Q_\Omega\left[  f'_\eps \left( U_1- U_2 \right)
 - f'_0\left( U_1- U_2\right)  \right]\phi\right\}
 +2\Pi_{\delta,\xi}^\perp\circ i^*\left\{  -{\mathds 1}_{\mathbb R^n\setminus\Omega}  f'_0\left( U_1- U_2\right) \phi\right\}
\end{align*}
satisfies that
 \begin{align*}
\|{\mathscr L_\eps}(\phi)\|_E
&\leq C\left(
| f'_\eps \left(U_1-U_2  \right)- f'_0\left(U_1-U_2\right)|_{\frac{n}{2}}
+|{\mathds 1}_{\mathbb R^n\setminus\Omega} f'_0\left(U_1-U_2\right)|_{\frac{n}{2}}
\right) |\phi|_{\frac{2n}{n-2}},
 \end{align*}
 where we used \eqref{bdproy} and Lemma~\ref{lem:holder}.  Furthermore,
 \begin{align*}
| f'_\eps \left(U_1-U_2  \right)
 - f'_0\left(U_1-U_2\right)|_{\frac{n}{2}}\leq C\eps |\ln \eps|
 \end{align*}
 (cf. \eqref{re4} and \cite[Lemma B.13]{PST23}). By Lemma~\ref{half:lemma},
 \begin{align*}
 |{\mathds 1}_{\mathbb R^n\setminus\Omega} f'_0\left(U_1-U_2\right)|_{\frac{n}{2}}\leq C\left(\frac{\delta}{\tau}\right)^2
 \end{align*}
(cf. \eqref{re2}).  Therefore,
\begin{align*}
\|{\mathscr L_\eps}(\phi)\|_E \leq C \left(\eps|\ln \eps|+\left(\frac{\delta}{\tau}\right)^2\right)\|\phi\|_E.
 \end{align*}

 The nonlinear term $\mathscr N(\phi)$ satisfies that
 \begin{align}
  \|\mathscr N(\phi)\|\leq C (\|\phi\|^2_E+\|\phi\|^{p-\eps}_E)
 \end{align}
 (cf. \eqref{N:ref}). To estimate the error $\mathscr E:=\mathscr E_1+\mathscr E_2+\mathscr E_3$, where
 \begin{align*}
  \mathscr E_1&:=\Pi ^\perp\circ i^*\left\{
Q_\Omega\left[  f_\eps \left( U_1- U_2  \right)
 - f_0\left( U_1- U_2\right)  \right]\right\}, \\
  \mathscr E_2&:=\Pi ^\perp\circ i^*\left\{
f_0\left( U_1- U_2\right)-  f_0\left(U_1\right)+f_0\left(U_2\right)\right\}, \\
  \mathscr E_3&:=\Pi ^\perp\circ i^*\left\{
 -{\mathds 1}_{\mathbb R^n\setminus\Omega}  f_0\left(U_1-U_2\right)\right\},
 \end{align*}
 observe that, by Lemma~\ref{lem:holder} and arguing as in Lemma~\ref{lem:estimates},
 \begin{align*}
  \|\mathscr E_1\|_E\leq C\max\{|\mathscr E_1|_{\frac{2n}{n+2}},|\mathscr E_1|_{\frac{ns}{n+2s}}\}\leq C\eps|\ln \eps|
 \end{align*}
 (cf. \eqref{eq:iii} and \eqref{lol}).  Similarly,
 \begin{align*}
  \|\mathscr E_2\|_E\leq C\max\{|\mathscr E_2|_{\frac{2n}{n+2}},|\mathscr E_2|_{\frac{ns}{n+2s}}\},
 \end{align*}
where, by Lemma~\ref{Lemma:I_1aux},
 \begin{align*}
|f_0 \left(U_1-U_2   \right)-f_0\left(U_1\right)+f_0\left(U_2 \right)|_{\frac{2n}{n+2}}&\leq C
\left\{\begin{aligned}
&\delta^{\frac{n+2}{2}}\quad\quad \textit{if}\ n\ge7,\\
&\delta^4|\ln \delta|\quad  \textit{if}\ n=6,\\
&\delta^{n-2}\quad\quad \textit{if}\ n=3,4,5,
       \end{aligned}\right.\\
|f_0 \left(U_1-U_2   \right)-f_0\left(U_1\right)+f_0\left(U_2 \right)|_{\frac{2s}{n+2s}}&\leq C
\left\{\begin{aligned}
&\delta^{\frac{n+2}{2}}\quad\quad \textit{if}\ n\ge 6,\\
&\delta^{n-2}\quad\quad \textit{if}\ n=3,4,5.
       \end{aligned}\right.
 \end{align*}
Finally, by Lemma~\ref{Lemma:I_1aux}, \eqref{dn}, \eqref{xin}, and an easy calculation,
\begin{align*}
 \frac{|{\mathds 1}_{\mathbb R^n\setminus\Omega}  f_0\left(U_1-U_2 \right)|_{\frac{ns}{n+2s}}}{|{\mathds 1}_{\mathbb R^n\setminus\Omega}  f_0\left(U_1-U_2 \right)|_{\frac{2n}{n+2}}}
 \leq C \eps^{\frac{((n-1) n+2) s+4 n}{2 (n-2) (n+1) s}-\frac{n^2+n-2}{2 (n-2)
   (n+1)}}
   =C\eps^{\frac{2 s-n (s-2)}{(n-2) (n+1) s}}
 \to 0\qquad \text{ as }\eps\to0,
\end{align*}
because $s<\frac{2n}{n-2}$. Then, by Lemma~\ref{Lemma:I_1aux},
 \begin{align*}
  \|\mathscr E_3\|_E
  &\leq C\max\{|{\mathds 1}_{\mathbb R^n\setminus\Omega}  f_0\left(U_1-U_2 \right)|_{\frac{2n}{n+2}},|{\mathds 1}_{\mathbb R^n\setminus\Omega}  f_0\left(U_1-U_2 \right)|_{\frac{ns}{n+2s}}\}\\
  &=C|{\mathds 1}_{\mathbb R^n\setminus\Omega}  f_0\left(U_1-U_2 \right)|_{\frac{2n}{n+2}}
  \leq C\left(\frac{\delta}{\tau} \right)^{\frac{n+2}{2}}\leq C\eps^{{\frac{(n+2)(n-1)}{2(n-2)(n+1)}}}
\end{align*}
(cf. \eqref{eq:mp2}).  Then, using \eqref{dn}, \eqref{xin}, \eqref{dt}, and the estimates above, we find that
\begin{align*}
 \|\mathscr E\|_E&\leq C
 \left\{\begin{aligned}
         &\eps^{{\frac{(n+2)(n-1)}{2(n-2)(n+1)}}}\quad \hbox{ if }\ n\ge4,\\
         &\eps|\ln \eps|\hspace{1.1cm} \hbox{ if }\ n=3.
         \end{aligned}\right.
\end{align*}
Now we can conclude the proof as in Proposition~\ref{rid1}.
\end{proof}

To solve the second equation  in \eqref{sn}, we observe that $U_{\delta_1,\xi_1}-U_{\delta_2,\xi_2}+\phi$ solves \eqref{s} if and only if $$(d_1,d_2,t_1,t_2,\eta_1,\eta_2)\in (0,\infty)^4\times\partial\Omega^2$$
 is a critical point of the {\em reduced energy}
 \begin{align*}
\tilde J_\eps(d_1,d_2,t_1,t_2,\eta_1,\eta_2):=J_\eps\left(U_{\delta_1,\xi_1}-U_{\delta_2,\xi_2}+\phi\right)
 \end{align*}
 (cf. Lemma~\ref{iff:lem}), where
 $$J_\eps(u):=\frac12 \int_{\mathbb R^n}|\nabla u|^2-\frac 1{p+1-\eps}\int_{\mathbb R^n}Q_\Omega |u|^{p+1-\eps}.$$
 
 \begin{proposition}
\label{red-en-n}
The expansion
\begin{align}
\tilde J_\eps(d_1,d_2,t_1,t_2,\eta_1,\eta_2)&=\mathfrak a+\mathfrak b \eps+\mathfrak c \eps\ln\eps \notag\\
&+\eps\Xi(d_1,d_2,\eta_1,\eta_2)+\eps^{1+\frac2{(n-2)(n+1)}}\Upsilon(d_1,d_2,t_1,t_2,\eta_1,\eta_2)+o\left(\eps^{1+\frac2{(n-2)(n+1)}}\right),\label{zm1}
\end{align}
holds $\cC^0-$uniformly with respect to $(d_1,d_2,t_1,t_2,\eta_1,\eta_2)$ in compact sets of $ (0,\infty)^4\times D_2\Omega.$
Here,
\begin{align*}
\Xi(d_1,d_2,\eta_1,\eta_2):=\mathfrak c_1{ \frac{(d_1d_2)^{\frac{n-2}{2}}}{|\eta_1-\eta_2|^{n-2}}}-\mathfrak c_2\ln (d_1d_2)
\end{align*}
and
\begin{align*}
\Upsilon(d_1,d_2,t_1,t_2,\eta_1,\eta_2):=-\mathfrak c_3(d_1d_2)^{\frac{n-2}{2}}\langle \eta_1 -\eta_2,t_1\nu_1-t_2\nu_2\rangle
+\mathfrak c_4 \left[\left(\frac{d_1}{t_1}\right)^n+\left(\frac{d_2}{t_2}\right)^n\right].
\end{align*}
All the constants only depend on $n$ and the constants $\mathfrak c_1,\mathfrak c_2,\mathfrak c_3,$ and $\mathfrak c_4$ are positive.
\end{proposition}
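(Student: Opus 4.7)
The strategy parallels the proof of Proposition~\ref{red-en}, adapted to the two-bubble Ansatz $W:=U_1-U_2$ and to the additional small scale $\tau$ governing the distance to $\partial\Omega$. The first step is the reduction $J_\eps(W+\phi)=J_\eps(W)+o(\eps^{1+2/((n-2)(n+1))})$. This is proved by writing $J_\eps(W+\phi)-J_\eps(W)$ as in \eqref{eq:J1}--\eqref{eq:J4}, bounding each summand via Lemma~\ref{lem:li} and Hölder's inequality, and invoking the norm bound on $\phi$ from Proposition~\ref{rid1n} together with $|{\mathds 1}_{\rn\setminus\Omega}W|_{p+1}\leq C(\delta/\tau)^{(n-2)/2}$ from Lemma~\ref{half:lemma}.

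Next I would decompose, as in \eqref{ee2},
\begin{align*}
J_\eps(W)=J_0(W)+\Big[\tfrac{1}{p+1}\irn|W|^{p+1}-\tfrac{1}{p+1-\eps}\irn|W|^{p+1-\eps}\Big]+\tfrac{2}{p+1-\eps}\int_{\rn\setminus\Omega}|W|^{p+1-\eps}.
\end{align*}
Using $-\Delta U_j=U_j^p$ and Lemma~\ref{lem:li} to expand $|U_1-U_2|^{p+1}$ in terms of $U_j^{p+1}$ and the interaction $\int U_1^p U_2$, the first summand contributes the constant $\mathfrak{a}$ (which absorbs $2J_0(U_1)$) plus a multiple of the bubble interaction $\alpha_n^{p+1}(\delta_1\delta_2)^{(n-2)/2}|\xi_1-\xi_2|^{-(n-2)}$. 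A Taylor expansion in $\eps$ repeating \eqref{ee3}--\eqref{ee4} with $\delta_j=d_j\eps^{1/(n-2)}$ produces $\mathfrak{b}\eps+\mathfrak{c}\eps\ln\eps-\mathfrak{c}_2\eps\ln(d_1d_2)+o(\eps)$. Since $\delta^{n-2}=\eps$ and $|\xi_1-\xi_2|\to|\eta_1-\eta_2|$, combining these yields the $\eps\,\Xi(d_1,d_2,\eta_1,\eta_2)$ contribution.

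The refined expansion of order $\eps^{1+2/((n-2)(n+1))}$ comes from two sources. First, Taylor expanding $|\xi_1-\xi_2|^{-(n-2)}$ about $|\eta_1-\eta_2|^{-(n-2)}$ using $\xi_j-\eta_j=\tau t_j\nu_j$ yields a correction of order $\eps\tau=\eps^{1+2/((n-2)(n+1))}$, proportional to $(d_1d_2)^{(n-2)/2}\langle\eta_1-\eta_2,t_1\nu_1-t_2\nu_2\rangle$, which supplies the $\mathfrak{c}_3$ term. Second, for the boundary contribution $\tfrac{2}{p+1}\int_{\rn\setminus\Omega}U_j^{p+1}$, the change of variable $y=(x-\eta_j)/\tau_j$ used in Lemma~\ref{half:lemma} produces
\begin{align*}
\tfrac{2}{p+1}\int_{\rn\setminus\Omega}U_j^{p+1}=\tfrac{2\alpha_n^{p+1}}{p+1}\Big(\tfrac{d_j}{t_j}\Big)^n\eps^{1+2/((n-2)(n+1))}\int_{\rn\setminus H_j}\frac{\d y}{|y-\nu_j|^{2n}}(1+o(1)),
\end{align*}
since $(\delta/\tau)^n=\eps^{1+2/((n-2)(n+1))}$ and $(\Omega-\eta_j)/\tau_j\to H_j$; this identifies $\mathfrak{c}_4$. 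Remaining boundary integrals of mixed products $U_1^p U_2$ and the $\eps$-Taylor remainder over $\rn\setminus\Omega$ are shown to be $o(\eps^{1+2/((n-2)(n+1))})$ via Lemma~\ref{half:lemma}.

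The main obstacle is the delicate bookkeeping: since $\eps^{1+2/((n-2)(n+1))}$ is only slightly smaller than $\eps$, every remainder---in particular the boundary cross-interaction and the error from $\phi$---must be verified to lie strictly below this threshold. This is made tractable by the scale separation $\delta\ll\tau$ built into \eqref{dt}, which ensures that near each $\eta_j$ the rescaled bubble is accurately approximated by its limit on the half-space $H_j$.
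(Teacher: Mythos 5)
Your proposal is correct and follows essentially the same strategy as the paper: reduce to $J_\eps(U_1-U_2)$ using the $\phi$-bound from Proposition~\ref{rid1n}, decompose $J_\eps(U_1-U_2)$ into the autonomous constant, the $\eps$-Taylor-remainder terms, the bubble--bubble interaction, and the $\rn\setminus\Omega$ boundary term, and then identify the $\mathfrak{c}_3$ correction from the Taylor expansion of $|\xi_1-\xi_2|^{-(n-2)}$ around $|\eta_1-\eta_2|^{-(n-2)}$ and the $\mathfrak{c}_4$ term from the half-space rescaling of the boundary integral (both at the common threshold scale $\eps\tau=(\delta/\tau)^n=\eps^{1+2/((n-2)(n+1))}$). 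The paper makes this bookkeeping explicit via the decomposition into $\mathfrak A+I_1+I_2+I_3+I_4$, and it is worth noting that in passing from $W+\phi$ to $W$ there is one extra cross-term beyond the single-bubble case, namely $\int(U_1^p-U_2^p-|W|^{p-1}W)\phi$, which must be controlled by Lemma~\ref{Lemma:I_1aux} together with the $\phi$-bound; you gesture at this only implicitly, but the paper itself glosses over it as well, and the estimate does close.
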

 \begin{proof}
Recall that $U_j:=U_{\delta_j,\xi_j}.$ Arguing as in Proposition~\ref{red-en}, we have that
 \begin{align}\label{z0}
J_\eps\left(U_1-U_2+\phi\right)=J_\eps\left(U_1-U_2 \right)+O(\|\phi\|_E^{2}).
 \end{align}
 By Proposition~\ref{rid1n},
 \begin{align*}
\|\phi\|_E^{2}=o\left(\eps^{1+\frac2{(n-2)(n+1)}}\right).
 \end{align*}
Note that
\begin{align*}
 \int_{\mathbb R^n}|\nabla (U_1-U_2)|^2
 =\int_{\mathbb R^n}|\nabla U_1|^2+|\nabla U_1|^2-2\nabla U_1\nabla U_2
 =\int_{\mathbb R^n}|\nabla U_1|^2+|\nabla U_1|^2-2U_1^p U_2.
\end{align*}

 Hence,
 \begin{align}
 J_\eps\left(U_1-U_2 \right)&=\frac{1}{2}\int_{\mathbb R^n}|\nabla (U_1-U_2)|^2
 -\frac{1}{p+1-\eps}\int_{\Omega}|U_1-U_2|^{p+1-\eps}+
 \frac{1}{p+1-\eps}\int_{\mathbb R^n\setminus\Omega}| U_1-U_2|^{p+1-\eps}\notag\\
 &=\frac{1}{2}\int_{\mathbb R^n}|\nabla (U_1-U_2)|^2
 -\frac{1}{p+1-\eps}\int_{\mathbb R^n}| U_1-U_2|^{p+1-\eps}+
2 \frac{1}{p+1-\eps}\int_{\mathbb R^n\setminus\Omega}| U_1-U_2|^{p+1-\eps}\notag\\
 &=\sum_{j=1,2}\left(\frac12\int_{\mathbb R^n}|\nabla U_j|^2-\frac1{p+1}\int_{\mathbb R^n}| U_j|^{p+1}\right)\notag\\
 &-\int_{\mathbb R^n} U_1^pU_2+\frac1{p+1}\int_{\mathbb R^n}| U_1|^{p+1}+\frac1{p+1}\int_{\mathbb R^n}| U_2|^{p+1}-\frac1{p+1}\int_{\mathbb R^n}|U_1-U_2|^{p+1}\notag
\\
&+
2 \frac1{p+1}\int_{\mathbb R^n\setminus\Omega}| U_1-U_2|^{p+1 }\notag\\
&+\frac1{p+1}\int_{\mathbb R^n}|U_1-U_2|^{p+1} -\frac1{p+1-\eps}\int_{\mathbb R^n}| U_1-U_2|^{p+1-\eps}\notag\\
&+2 \frac1{p+1-\eps}\int_{\mathbb R^n\setminus\Omega}| U_1-U_2|^{p+1-\eps}-2 \frac1{p+1}\int_{\mathbb R^n\setminus\Omega}| U_1-U_2|^{p+1}\notag\\
&=\mathfrak A+I_1+I_2+I_3+I_4,\label{z1}
 \end{align}
where
\begin{align*}
 \mathfrak A&:=\sum_{j=1,2}\left(\frac12\int_{\mathbb R^n}|\nabla U_j|^2-\frac1{p+1}\int_{\mathbb R^n}| U_j|^{p+1}\right),\\
 I_1&:= -\int_{\mathbb R^n} U_1^pU_2+\frac1{p+1}\int_{\mathbb R^n}| U_1|^{p+1}+\frac1{p+1}\int_{\mathbb R^n}| U_2|^{p+1}-\frac1{p+1}\int_{\mathbb R^n}|U_1-U_2|^{p+1},\\
 I_2&:=2 \frac1{p+1}\int_{\mathbb R^n\setminus\Omega}| U_1-U_2|^{p+1 },\\
 I_3&:=\frac1{p+1}\int_{\mathbb R^n}|U_1-U_2|^{p+1} -\frac1{p+1-\eps}\int_{\mathbb R^n}| U_1-U_2|^{p+1-\eps},\\
 I_4&:=2 \frac1{p+1-\eps}\int_{\mathbb R^n\setminus\Omega}| U_1-U_2|^{p+1-\eps}-2 \frac1{p+1}\int_{\mathbb R^n\setminus\Omega}| U_1-U_2|^{p+1 }.
\end{align*}

By the proof of \cite[Lemma 3.2]{dpfm}, there are $\mathfrak B\in \r$ and $\mathfrak C>0$ such that
\begin{align}\label{z2}
 I_3=-\eps\left(\mathfrak B+\mathfrak C\ln (\delta_1\delta_2)+O(\eps)\right)\qquad \text{ as }\eps\to 0
\end{align}
(cf. \eqref{ee3}).  Moreover, by the mean value theorem and arguing as in Lemma~\ref{half:lemma}, it is not hard to show that
\begin{align*}
 I_4
 &=2 \frac1{p+1-\eps}\left(\int_{\mathbb R^n\setminus\Omega}| U_1-U_2|^{p+1-\eps}
  -\int_{\mathbb R^n\setminus\Omega}| U_1-U_2|^{p+1}\right)
 +2 \left(\frac1{p+1-\eps}-\frac1{p+1}\right)\int_{\mathbb R^n\setminus\Omega}| U_1-U_2|^{p+1}\\
& =o(\eps^2)\qquad \text{as $\eps\to 0.$}
\end{align*}

Next we estimate $I_1$ and $I_2.$ Let $r>0$ be such that $B_1\cap B_2=\emptyset,$ where $B_j:=B(\eta_j,r).$ Using Lemma~\ref{lem:li},
\begin{align*} \int_{\mathbb R^n}|U_1-U_2|^{p+1}&=\int_{B_1}|U_1-U_2|^{p+1}+
\int_{B_2}|U_1-U_2|^{p+1}+\int_{\mathbb R^n\setminus B_1\cup B_2}|U_1-U_2|^{p+1}\\
&=\int_{B_1}|U_1 |^{p+1}-(p+1)\int_{B_1}|U_1 |^{p }U_2
+\int_{B_2}|U_ 2|^{p+1}-(p+1)\int_{B_2}|U_2 |^{p }U_1+O(\delta^n)\\
&=\int_{\mathbb R^n}|U_1 |^{p+1}-(p+1)\int_{\mathbb R^n}|U_1 |^{p }U_2
+\int_{\mathbb R^n}|U_ 2|^{p+1}-(p+1)\int_{\mathbb R^n}|U_2 |^{p }U_1+O(\delta^n),\end{align*}
and so, there are $\mathfrak D,\mathfrak E>0$ such that
\begin{align}
I_1
&=\int_{\mathbb R^n}|U_2 |^{p }U_1+O(\delta^n)\notag\\
&=\int_{\mathbb R^n}\frac{\alpha_n^\frac{n+2}{n-2} \delta_1^\frac{n+2}{2}}{(\delta_1^2+|x-\xi_1|^2)^\frac{n+2}{2}}\frac{\alpha_n \delta_2^\frac{n-2}{2}}{(\delta_2^2+|x-\xi_2|^2)^\frac{n-2}{2}}\, dx+O(\delta^n)\notag\\
&=\alpha_n^{\frac{n+2}{n-2}+1} \delta_1^{\frac{n-2}{2}}\delta_2^\frac{n-2}{2}
\int_{\mathbb R^n}
\frac{1}{(1+|y|^2)^\frac{n+2}{2}}
\frac{1}{(\delta_2^2+|\delta_1 y+\xi_1-\xi_2|^2)^\frac{n-2}{2}}\, dy+O(\delta^n)\notag\\
&=(\mathfrak D+o(1))(\delta_1\delta_2)^{\frac{n-2}{2}}\frac{1}{|\xi_1-\xi_2|^{n-2}}+O(\delta^n)\notag\\
&=(\mathfrak D+o(1))(\delta_1\delta_2)^{\frac{n-2}{2}}\left(\frac{1}{|\eta_1-\eta_2|^{n-2}}-\mathfrak E \left\langle \frac{\eta_1 -\eta_2}{|\eta_1-\eta_2|^n},\tau_1\nu_1-\tau_2\nu_2\right\rangle\right)+O(\delta^{n-2}\tau^2)+O(\delta^n),\label{z3}
\end{align}
because $|\xi_1-\xi_2|^{ 2}=|\eta_1+\tau_1\nu_1-\eta_2-\tau_2\nu_2|^2=|\eta_1 -\eta_2|^2+2\langle \eta_1 -\eta_2,\tau_1\nu_1-\tau_2\nu_2\rangle+O(\tau^2)$ and hence, using the mean value theorem,
\begin{align*}
\frac{1}{|\xi_1-\xi_2|^{n-2}}=
\frac{1}{|\eta_1-\eta_2|^{n-2}}
-(n-2)\frac{\langle \eta_1 -\eta_2,\tau_1\nu_1-\tau_2\nu_2\rangle}{|\eta_1-\eta_2|^n}+O(\tau^2).
\end{align*}
Finally, arguing as in Lemma~\ref{half:lemma},
 \begin{align}
 I_2
 =2 \frac1{p+1}\int_{\mathbb R^n\setminus\Omega}| U_1-U_2|^{p+1 }
 =\mathfrak F\left[ \left(\frac{\delta _1}{\tau_1}\right)^n(1+o(1))+\left(\frac{\delta _2}{\tau_2}\right)^n(1+o(1))\right]\label{z4}
 \end{align}
 for some $\mathfrak F\in \R$ as $\eps\to 0$. The expansion \eqref{zm1} now follows by \eqref{z0}, \eqref{z1}, \eqref{z2}, \eqref{z3}, and \eqref{z4}.

 \end{proof}
 \smallskip

 \begin{proof}[Proof of Theorem~\ref{main2}: completed]
 It suffices to show that the reduced energy $\widetilde J_\eps$ has a critical point (cf. Lemma~\ref{iff:lem}).

Consider the change of variables $d_1=r$ and $d_2={\frac{s}{r}}$. Then, it suffices to find a global minimum of the function
\begin{align*} \Psi_\eps(s ,r,t_1,t_2,\eta_1,\eta_2)&=\mathfrak c_1{ \frac{s^{n-2}}{|\eta_1-\eta_2|^{n-2}}}-2\mathfrak c_2\ln s\\ &
+\eps^{ \frac2{(n-2)(n+1)}}  \left\{-\mathfrak c_3 s^{n-2 }\left\langle \frac{\eta_1 -\eta_2}{|\eta_1-\eta_2|^n},t_1\nu_1-t_2\nu_2\right\rangle
+\mathfrak c_4 \left[\left(\frac{r}{t_1}\right)^n+\left(\frac{s}{r t_2}\right)^n\right]\right\}\\ &
+o\left(\eps^{ \frac2{(n-2)(n+1)}}\right).
\end{align*}

Let $\bar \eta_1,\bar \eta_2\in\partial\Omega$ be such that
\begin{align}\label{etaseq}
r:=|\bar \eta_1-\bar \eta_2|=\max\{ |\eta_1-\eta_2|\ : \ \eta_1,\eta_2\in\partial\Omega\}
\end{align}
and let $\bar s>0$ be the minimum of the function
\begin{align*}
s\to\mathfrak c_1{ \frac{s^{n-2}}{|\bar \eta_1-\bar \eta_2|^{n-2}}}-2\mathfrak c_2\ln s,\qquad s>0.
\end{align*}

Let ${\mathcal U}:=\left\{(s,\eta_1,\eta_2)\in \left(\frac{\bar s}{2},2\bar s\right)\times \partial \Omega \times \partial \Omega\::\: |\eta_1-\eta_2|>\frac{r}{2}\right\}$
and let $\Psi_1:{\mathcal U}\to \r$ be given by
\begin{align*}
\Psi_1(s,\eta_1,\eta_2):=
 \mathfrak c_1{ \frac{s^{n-2}}{|\eta_1-\eta_2|^{n-2}}}-2\mathfrak c_2\ln s.
\end{align*}
It is easy to see that $(\bar s,\bar \eta_1,\bar \eta_2)\in \mathcal U$ is the minimum of $\Psi_1$ in $\mathcal U$. Now, consider the function $ \Psi_2:{\mathcal U}\times(0,\infty)^3\to \r$ given by
\begin{align*}
 \Psi_2(s,\eta_1,\eta_2,r,t_1,t_2):=\left\{-\mathfrak c_3 s^{n-2 }\left\langle \frac{\eta_1 -\eta_2}{|\eta_1-\eta_2|^n},t_1\nu_1-t_2\nu_2\right\rangle
+\mathfrak c_4 \left[\left(\frac{r}{t_1}\right)^n+\left(\frac{s}{r t_2}\right)^n\right]\right\}
\end{align*}

Since $\o$ is a bounded smooth domain in $\rn$ and \eqref{etaseq} holds, then the interior unit normal to $\partial\o$ at $\bar\eta_1$ is $\frac{\bar\eta_2-\bar\eta_1}{|\bar\eta_2-\bar\eta_1|}$. \ Indeed, the ball $B$ centered at $\bar\eta_2$ of radius $|\bar\eta_2-\bar\eta_1|$ contains $\o$. Therefore, the tangent space to $\partial\o$ at $\bar\eta_1$ coincides with the tangent space to the sphere $\partial B$ at $\bar\eta_1$, which is the space orthogonal to $\bar\eta_2-\bar\eta_1$. Therefore,
\begin{align*}
 \bar \lambda:=-\left\langle \bar \nu_1, \frac{\bar\eta_1- \bar\eta_2}{|\bar\eta_1- \bar\eta_2|^n}\right\rangle=-\left\langle  \bar\nu_2, \frac{\bar\eta_2- \bar\eta_1}{|\bar\eta_1- \bar\eta_2|^n}\right\rangle>0.
\end{align*}
Hence, the function
\begin{align*}
 (0,\infty)^3 \ni (r,t_1,t_2)\mapsto \Psi_2(\bar s,\bar \eta_1,\bar \eta_2,r,t_1,t_2):= \mathfrak c_3 \bar s^{n-2 }\bar \lambda (t_1+t_2)
+\mathfrak c_4 \left[\left(\frac{r}{t_1}\right)^n+\left( \frac{\bar s}{r t_2}\right)^n\right]
\end{align*}
has a strict minimum at some point $(\bar r,\bar t_1,\bar t_2)\in (0,\infty)^3.$ Consider the set
${\mathcal V}:=(\bar r/2,2\bar r)\times(\bar t_1/2,2\bar t_1)\times(\bar t_2/2,2\bar t_2).$

Extending trivially the function $\Psi_1$ to $\mathcal U\times \mathcal V$, it is not hard to see that the continuous function
\begin{align*}
\Psi_1+\eps^{ \frac2{(n-2)(n+1)}} \Psi_2:\overline{\mathcal U\times \mathcal V}\to \r
\end{align*}
achieves its minimum at a point $(s_\eps,\eta_{1,\eps},\eta_{1,\eps},r_\eps,t_{1,\eps},t_{2,\eps})\in \mathcal U\times \mathcal V$ such that
\begin{align*}
 (s_\eps,\eta_{1,\eps},\eta_{1,\eps},r_\eps,t_{1,\eps},t_{2,\eps})\to (\bar s,\bar \eta_1,\bar \eta_2,\bar r,\bar t_1,\bar t_2)\qquad \text{ as }\eps\to 0.
\end{align*}
Since this critical point is stable under $C_0$-perturbations (because it is the global minimum), this concludes the proof.
\end{proof}

 \section{The critical problem in a domain with a shrinking hole}
\label{sec:critical}

In this section we prove Theorem~\ref{main3}. We follow the same scheme as before, focusing only on the key points of the proof. We refer to Section~\ref{sec:positive solution} and to the papers \cite{gmp,mupi1,mupi2} for further details.

Recall that, by assumption, $0\in\o$. As before, we rewrite the problem \eqref{eq:p_critical} as
\begin{equation}\label{p2hole}
  u-i^* \left[Q_{\Omega_\vr} f_0 (u)\right]=0,\qquad u\in D^{1,2}(\mathbb R^n),
\end{equation}
where $f_0(u)=u^p$ and $\o_\vr:=\{x\in\o:|x|>\vr\}$.  We look for a solution with the Ansatz $U_{\delta,\xi}+\phi,$ where
 \begin{equation}\label{parahole}
 \delta:=d\sqrt\vr\ \hbox{ \ with \ }\ d\in(0,\infty), \qquad \xi=\delta\zeta\ \hbox{ \ with \ }\  \zeta\in\mathbb R^n, \qquad\text{and}\qquad\phi\in K_{\delta,\xi}^\perp,
 \end{equation}
with $K_{\delta,\xi}^\perp $ as in \eqref{ko}.  The  choice of $\delta$ is motivated by the leading terms in the  expansion of the energy given in proof of the Proposition~\ref{red-enhole}. Problem \eqref{p2hole} is equivalent to the system
\begin{equation}\label{shole}
\begin{cases}
\Pi_{\delta,\xi}^\perp\left(U_{\delta,\xi}+\phi-i^* \left[Q_{\Omega_\vr} f_0 \left(U_{\delta,\xi}+\phi\right)\right]\right)=0, \\
\Pi_{\delta,\xi} \left(U_{\delta,\xi}+\phi-i^* \left[Q_{\Omega_\vr} f_0 \left(U_{\delta,\xi}+\phi\right)\right]\right)=0.
\end{cases}
\end{equation}
The following result gives a solution for the first equation in \eqref{shole}.

\begin{proposition}\label{rid1hole}
Given an open bounded set $\t$ with $\overline{\t}\subset (0,\infty)\times\mathbb R^n$, there exist $c>0$ and $\vr_0>0$ such that, for each $\vr\in(0,\vr_0)$ and $(d,\zeta)\in \t$, there exists a unique $\phi_{\delta,\xi}\in K^\perp_{\delta,\xi}$ with $\delta=d\sqrt\vr$ and $\xi=\delta\zeta$ which satisfies
\begin{align*}
\|\phi_{\delta,\xi}\|\leq c\vr^\frac{n+6}4
\end{align*}
and solves the first equation in \eqref{shole}. Furthermore, for each $\vr\in(0,\vr_0)$, the map $(d,\zeta)\mapsto\phi_{\delta,\xi}$ is of class $\cC^1$ in $\overline{\t}$.
\end{proposition}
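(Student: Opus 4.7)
My plan is to carry out the Lyapunov-Schmidt reduction exactly as in Proposition~\ref{rid1}, adapted to the critical setting. Since $q=p$ here, the nonlinearity carries no extra parameter, so the $\eps$-correction terms $\mathscr L_\eps$ and the $f_\eps-f_0$ part of $\mathscr E$ from Section~\ref{sec:positive solution} disappear; they are replaced by a single ``geometric'' source produced by the shrinking hole $B_\vr$ together with $\mathbb R^n\setminus\Omega$. Using $U_{\delta,\xi}=i^*[f_0(U_{\delta,\xi})]$ and $Q_{\Omega_\vr}=1-2\mathds{1}_{\mathbb R^n\setminus\Omega_\vr}$, I would rewrite the first equation of \eqref{shole} as $\mathscr L(\phi)=\mathscr N(\phi)+\mathscr L_\vr(\phi)+\mathscr E$, where $\mathscr L(\phi):=\Pi_{\delta,\xi}^\perp[\phi-i^*[f'_0(U_{\delta,\xi})\phi]]$ is the standard linearization at the bubble (invertible on $K_{\delta,\xi}^\perp$ with a uniformly bounded inverse via the rescaling $P_{\delta,\xi}$, exactly as in Proposition~\ref{rid1}), $\mathscr N(\phi)$ collects the purely nonlinear terms, $\mathscr L_\vr(\phi):=-2\,\Pi_{\delta,\xi}^\perp\circ i^*[\mathds{1}_{\mathbb R^n\setminus\Omega_\vr}f'_0(U_{\delta,\xi})\phi]$ is a small linear perturbation, and $\mathscr E:=-2\,\Pi_{\delta,\xi}^\perp\circ i^*[\mathds{1}_{\mathbb R^n\setminus\Omega_\vr}f_0(U_{\delta,\xi})]$ is the source term.

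I would next estimate each ingredient. By Lemmas~\ref{lem:li} and~\ref{lem:holder}, exactly as in Lemma~\ref{lem:estimates}(i), one obtains $\|\mathscr N(\phi)\|\leq C(\|\phi\|^2+\|\phi\|^p)$. Splitting $\mathds{1}_{\mathbb R^n\setminus\Omega_\vr}=\mathds{1}_{B_\vr}+\mathds{1}_{\mathbb R^n\setminus\Omega}$ and rescaling $x=\delta y+\xi$ (so that $B_\vr$ becomes the shrinking ball $B_{\sqrt\vr/d}(-\zeta)$ by \eqref{parahole}), Hölder's inequality gives $|\mathds{1}_{\mathbb R^n\setminus\Omega_\vr}U_{\delta,\xi}^{p-1}|_{n/2}=o(1)$ as $\vr\to 0$, hence $\|\mathscr L_\vr(\phi)\|=o(1)\,\|\phi\|$ by Lemma~\ref{lem:holder}(ii). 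For $\mathscr E$, the same rescaling together with the cancellations produced by projecting onto $K_{\delta,\xi}^\perp$ yields the sharp bound $\|\mathscr E\|\leq C\vr^{(n+6)/4}$ (for $n\geq 3$). With these three estimates in hand, the map $T(\phi):=\mathscr L^{-1}(\mathscr N(\phi)+\mathscr L_\vr(\phi)+\mathscr E)$ sends the ball $\{\phi\in K_{\delta,\xi}^\perp:\|\phi\|\leq c\vr^{(n+6)/4}\}$ into itself and is a contraction on it for $c$ large enough and $\vr$ small enough; Banach's fixed point theorem then produces the unique $\phi_{\delta,\xi}$.

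The $\cC^1$-regularity of $(d,\zeta)\mapsto\phi_{\delta,\xi}$ is then obtained verbatim as in the last paragraph of the proof of Proposition~\ref{rid1}: pull the equation back to the fixed space $K_{1,0}^\perp$ via $P_{\delta,\xi}$, observe that the $\psi$-derivative of the resulting map equals $P_{\delta,\xi}^{-1}\circ(\mathrm{id}-T'_{\delta,\xi}(\phi_{\delta,\xi}))\circ P_{\delta,\xi}$, and use that $T'_{\delta,\xi}(\phi_{\delta,\xi})\to 0$ in operator norm as $\vr\to 0$ to conclude via the implicit function theorem \cite[(10.2.1)]{d}. The main obstacle is precisely the sharp bound on $\mathscr E$: the crude Sobolev-duality estimate $\|\mathscr E\|\leq C|\mathds{1}_{\mathbb R^n\setminus\Omega_\vr}U_{\delta,\xi}^p|_{2n/(n+2)}$ delivers only $\vr^{(n+2)/4}$, and to gain the extra factor of $\vr$ one must expand $i^*[\mathds{1}_{\mathbb R^n\setminus\Omega_\vr}U_{\delta,\xi}^p]$ through the Newton potential and carefully subtract its leading components along $Z_{\delta,\xi}^0,\dots,Z_{\delta,\xi}^n$ before taking the norm, in the spirit of the computations in \cite{gmp,mupi1,mupi2}; everything else is a direct transcription of the subcritical reduction scheme.
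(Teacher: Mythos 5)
Your decomposition of the first equation of \eqref{shole} into $\mathscr L(\phi)=\mathscr N(\phi)+\mathscr L_\vr(\phi)+\mathscr E$, the invertibility of $\mathscr L$ via Lemmas~\ref{B1}--\ref{B2}, the bounds on $\mathscr N$ and $\mathscr L_\vr$, and the closing fixed-point and implicit-function-theorem steps all coincide with the paper's proof. The gap is in how you propose to control $\mathscr E$.

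You correctly observe that the Hölder--Sobolev estimate
$\|\mathscr E\|\leq C\bigl(|\mathds 1_{\rn\setminus\o}f_0(U_{\delta,\xi})|_{2n/(n+2)}+|\mathds 1_{B_\vr}f_0(U_{\delta,\xi})|_{2n/(n+2)}\bigr)$
gives only $O(\vr^{(n+2)/4})$: the exterior piece is $O(\delta^{(n+2)/2})$ by \eqref{eq:mp2}, and since $p\cdot\frac{2n}{n+2}=2^*$, a rescaling on the hole (where $U_{\delta,\xi}\sim\delta^{-(n-2)/2}$) gives $|\mathds 1_{B_\vr}U_{\delta,\xi}^p|_{2n/(n+2)}=O\bigl((\vr/\delta)^{(n+2)/2}\bigr)$, and with $\delta=d\sqrt\vr$ both powers are $\vr^{(n+2)/4}$. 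However, the remedy you propose — expanding $i^*[\mathds 1_{\rn\setminus\o_\vr}U_{\delta,\xi}^p]$ through the Newton potential and subtracting its $Z^0_{\delta,\xi},\dots,Z^n_{\delta,\xi}$-components before taking the norm — cannot improve this order. Since $\langle Z^i_{\delta,\xi},i^*g\rangle=\int_{\rn}Z^i_{\delta,\xi}\,g$, the $K_{\delta,\xi}$-components of $\mathscr E$ are, up to constants, $\int_{\rn\setminus\o_\vr}Z^i_{\delta,\xi}\,U_{\delta,\xi}^p$, and the same rescalings show the $B_\vr$- and $\rn\setminus\o$-contributions are each $O((\vr/\delta)^n)=O(\vr^{n/2})$ and $O(\delta^n)=O(\vr^{n/2})$, which for $n\geq3$ are already $o(\vr^{(n+2)/4})$. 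Thus $\Pi_{\delta,\xi}^\perp$ removes a lower-order part, and the leading-order norm of $\mathscr E$ is untouched. The paper does not perform any such refinement: its proof simply applies the crude estimate, and the line ``$\leq C(\delta^{(n+2)/2}+\delta^2(\vr/\delta)^{(n+2)/2})\leq C\vr^{(n+6)/4}$'' contains a slip (the extra $\delta^2$ in the hole-term is spurious, and even with it, $\delta^{(n+2)/2}=O(\vr^{(n+2)/4})$ alone already prevents the conclusion $\vr^{(n+6)/4}$). The correct bound from these estimates is $\|\phi_{\delta,\xi}\|\leq C\vr^{(n+2)/4}$; this is still sufficient for what follows, since Proposition~\ref{red-enhole} only needs $\|\phi_{\delta,\xi}\|^2=o(\vr^{n/2})$ and $(\vr^{(n+2)/4})^2=\vr^{(n+2)/2}=o(\vr^{n/2})$. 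So rather than chasing a projection cancellation that is not there, you should record the bound $\vr^{(n+2)/4}$ and observe it suffices.
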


\begin{proof}
Noting that $U_{\delta,\xi}=i^* \left[ f_0 \left(U_{\delta,\xi}\right)\right]$, the first equation in \eqref{shole} can be rewritten as
\begin{align*}
\underbrace{\Pi_{\delta,\xi}^\perp\left[\phi-i^* \left[   f'_0 \left(U_{\delta,\xi} \right)\phi \right]\right]}_{=:\mathscr L(\phi)}
 &=\underbrace{(\Pi_{\delta,\xi}^\perp\circ i^*)\left(  Q_{\Omega_\vr}\left[  f_0 \left(U_{\delta,\xi}+\phi \right) -
 f_0 \left(U_{\delta,\xi}  \right) - f'_0\left(U_{\delta,\xi}\right)\phi  \right]\right)}_{=:\mathscr N(\phi)}\\
&\quad +\underbrace{ (\Pi_{\delta,\xi}^\perp\circ i^*)\left(  -{\mathds 1}_{\mathbb R^n\setminus{\Omega_\vr}}  f'_0\left(U_{\delta,\xi}\right) \phi\right)}_{=:\mathscr L_\vr(\phi)}+\underbrace{ (\Pi_{\delta,\xi}^\perp\circ i^*)\left(
 -{\mathds 1}_{\mathbb R^n\setminus{\Omega_\vr}}  f_0\left(U_{\delta,\xi}\right)\right)}_{=:\mathscr E}.
\end{align*}
The linear operator $\mathscr L:K^\perp_{\delta,\xi} \to K^\perp_{\delta,\xi}$ is invertible and there is a constant $C>0$ such that, for small enough $\vr$,
$$\|\mathscr L^{-1}(\psi)\|\leq C\|\psi\|\qquad\text{for every \ }\psi\in K^\perp_{\delta,\xi}, \ (d,\zeta)\in\overline{\t},$$
see Lemmas~\ref{B1} and~\ref{B2}.

In the following we use $C>0$ to denote possibly different constants independent of $\vr$. Setting $\eps=0$ in the proof of Lemma~\ref{lem:estimates}$(i)$ we see that the new nonlinear operator $\mathscr N$ defined above satisfies
\begin{align*}
\|\mathscr N(\phi)\|\leq C(\|\phi\|^2+\|\phi\|^p)\qquad\text{for every \ }\phi\in K_{\delta,\xi}^\perp.
\end{align*}
The linear operator ${\mathscr L_\vr}$ satisfies
\begin{align*}
\|{\mathscr L_\vr}(\phi)\|
\leq C\left(|\mathds 1_{\rn\setminus\o}f'_0(U_{\delta,\xi})|_\frac n2 + |\mathds 1_{B_\vr}f'_0(U_{\delta,\xi})|_\frac n2\right) |\phi|_ \frac{2n}{n-2} \leq C\Big(\delta^2+\left(\frac{\rho}{\delta}\right)^2\Big)\|\phi\|\qquad\text{for every \ }\phi\in K_{\delta,\xi}^\perp,
\end{align*}
and the error $\mathscr E$ satisfies the estimate
\begin{align*}
\|\mathscr E\|\leq C\left(|\mathds 1_{\rn\setminus\o}f_0(U_{\delta,\xi})|_\frac{2n}{n+2} + |\mathds 1_{B_\vr}f_0(U_{\delta,\xi})|_\frac{2n}{n+2}\right)
 \leq C \Big(\delta^{\frac{n+2}{2}}+\delta^2\left(\frac{\vr}{\delta}\right)^\frac{n+2}2\Big)
 \leq C \vr^{\frac{n+6}{4}},
\end{align*}
where $B_\vr$ is the ball of radius $\vr$ centered at $0$. The rest of the proof is like that of Proposition~\ref{rid1}.
\end{proof}

To solve the second equation in \eqref{shole}, arguing as in Lemma~\ref{iff:lem}, we prove first that $U_{\delta,\xi}+\phi_{\delta,\xi}$ solves \eqref{shole} if and only if $(d,\zeta)\in (0,\infty)\times\mathbb R^n$ is a critical point of the reduced energy
$$\tilde J_\vr(d,\zeta):=J_\vr\left(U_{\delta,\xi}+\phi_{\delta,\xi}\right)$$
 where $\delta=d\sqrt\vr$, $\xi=\delta\zeta$, and
 $$J_\vr(u):=\frac12 \int_{\mathbb R^n}|\nabla u|^2-\frac 1{p+1}\int_{\mathbb R^n}Q_{\Omega_\vr}|u|^{p+1}.$$
Then, we derive the following expansion for $\tilde J_\vr$.

\begin{proposition}
\label{red-enhole}
Let $\t$ be an open bounded set with $\overline{\t}\subset (0,\infty)\times\mathbb R^n$. Then, the expansion
$$\tilde J_\vr(d,\zeta)=\mathfrak a+\vr^\frac n2\Phi(d,\zeta)+o\left(\vr^\frac n2\right),$$
holds true $\cC^1$-uniformly in $\overline{\t}$ as $\vr\to 0$, where
$$\Phi(d,\zeta):=\mathfrak b_1 d^n+\mathfrak b_2\frac1{d^n}\frac1{(1+|\zeta|^2)^n}.$$
All the constants only depend on $n$, \ $\mathfrak{a}=\frac{1}{n}S^\frac{n}{2}$, \ and the constants $\mathfrak b_1$ and $\mathfrak b_2$ are positive.
\end{proposition}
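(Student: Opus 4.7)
My plan mirrors the strategy of Proposition~\ref{red-en}, with two key simplifications (no subcritical perturbation $\eps$) and two new twists (the excised ball $B_\vr$ and the ``translation'' parameter $\zeta$ which is $O(1)$ rather than the center $\xi$).

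First I would reduce $\tilde J_\vr(d,\zeta) = J_\vr(U_{\delta,\xi} + \phi_{\delta,\xi})$ to $J_\vr(U_{\delta,\xi})$ up to an $o(\vr^{n/2})$ error. Writing
\begin{align*}
J_\vr(U_{\delta,\xi}+\phi)-J_\vr(U_{\delta,\xi})
&= \tfrac12\|\phi\|^2 + \irn \nabla U_{\delta,\xi}\cdot\nabla\phi \\
&\quad - \tfrac{1}{p+1}\irn Q_{\Omega_\vr}\bigl(|U_{\delta,\xi}+\phi|^{p+1}-U_{\delta,\xi}^{p+1}\bigr),
\end{align*}
and substituting $\int\nabla U_{\delta,\xi}\cdot\nabla\phi = \int U_{\delta,\xi}^p \phi$, the first-order Taylor remainder is controlled by $\|\phi\|^2$ exactly as in the estimates \eqref{eq:J2}--\eqref{eq:J4}, and the cross term $\int (1-Q_{\Omega_\vr})U_{\delta,\xi}^p\phi$ is bounded by H\"older's inequality and the estimate $\|\mathscr E\|\leq C\vr^{(n+6)/4}$ from the proof of Proposition~\ref{rid1hole}. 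Since $\|\phi_{\delta,\xi}\|\leq c\vr^{(n+6)/4}$, all these contributions are $O(\vr^{(n+6)/2}) = o(\vr^{n/2})$.

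Next I would expand $J_\vr(U_{\delta,\xi})$ itself. Using that $U_{\delta,\xi}$ solves \eqref{eq:yamabe},
\begin{align*}
J_\vr(U_{\delta,\xi}) = J_0(U_{\delta,\xi}) + \tfrac{1}{p+1}\irn(1-Q_{\Omega_\vr})|U_{\delta,\xi}|^{p+1} = \mathfrak a + \tfrac{2}{p+1}\int_{\rn\setminus\Omega_\vr}|U_{\delta,\xi}|^{p+1},
\end{align*}
since $J_0(U_{\delta,\xi})=\tfrac{1}{n}S^{n/2}=\mathfrak a$. For $\vr$ small enough the complement splits disjointly as $\rn\setminus\Omega_\vr = (\rn\setminus\Omega)\cup B_\vr$, giving two integrals to expand. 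For the outer piece, since $\xi = d\sqrt{\vr}\,\zeta\to 0$ while $\rn\setminus\Omega$ is bounded away from $0$, dominated convergence yields
\begin{align*}
\int_{\rn\setminus\Omega}|U_{\delta,\xi}|^{p+1} = \alpha_n^{p+1}\delta^n\!\int_{\rn\setminus\Omega}\!\frac{dx}{(\delta^2+|x-\xi|^2)^n} = d^n\vr^{n/2}\Bigl(\alpha_n^{p+1}\psi_\Omega(0)+o(1)\Bigr),
\end{align*}
which produces the $\mathfrak b_1 d^n$ term. For the inner piece, the natural rescaling is $x=\delta z$, which transforms it into
\begin{align*}
\int_{B_\vr}|U_{\delta,\xi}|^{p+1} = \alpha_n^{p+1}\!\int_{B_{\vr/\delta}}\!\frac{dz}{(1+|z-\zeta|^2)^n}.
\end{align*}
Since $\vr/\delta = \sqrt{\vr}/d\to 0$, the integration domain shrinks to a point where the integrand equals $(1+|\zeta|^2)^{-n}$, so the integral behaves like $\alpha_n^{p+1}\omega_n (\vr/\delta)^n (1+|\zeta|^2)^{-n} = \frac{\alpha_n^{p+1}\omega_n}{d^n}\vr^{n/2}(1+|\zeta|^2)^{-n}+o(\vr^{n/2})$. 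This produces the $\mathfrak b_2/(d^n(1+|\zeta|^2)^n)$ term with $\mathfrak b_2 = \tfrac{2\alpha_n^{p+1}\omega_n}{p+1}>0$.

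Finally, the $\mathcal C^1$-uniformity follows by the same differentiation-inside-the-bracket argument used at the end of Proposition~\ref{red-en}, combined with $\|\delta\Pi_{\delta,\xi}\partial_\delta\phi_{\delta,\xi}\|$-type estimates as in \eqref{eq:norm phi} (now of order $\vr^{(n+6)/4}$). The main obstacle I anticipate is controlling the inner integral over $B_\vr$ in a $\mathcal C^1$-uniform manner in both $d$ and $\zeta$: because the shrinking domain $B_{\vr/\delta}$ is much smaller than any neighborhood of $\zeta$, justifying that the first-order Taylor expansion of the integrand at $z=0$ can be differentiated through the parameters $(d,\zeta)$ without losing the claimed $o(\vr^{n/2})$ remainder requires a careful uniform estimate of $\partial_d$ and $\partial_\zeta$ of $(1+|z-\zeta|^2)^{-n}$ for $|z|\leq \vr/\delta$. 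This can be done via an explicit mean-value bound on the derivative.
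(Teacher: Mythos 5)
Your proof is correct and takes essentially the same route as the paper: it reduces $\tilde J_\vr$ to $J_\vr(U_{\delta,\xi})$ via the error bound from Proposition~\ref{rid1hole}, rewrites this as $\mathfrak a + \frac{2}{p+1}\int_{\rn\setminus\Omega_\vr}|U_{\delta,\xi}|^{p+1}$, splits the complement $\rn\setminus\Omega_\vr$ into the disjoint pieces $\rn\setminus\Omega$ and $B_\vr$, and expands each using exactly the changes of variables the paper employs (dominated convergence for the outer integral, rescaling $x=\delta z$ for the inner one). The two points you treat more carefully than the paper—the sign bookkeeping in the decomposition $\rn\setminus\Omega_\vr=(\rn\setminus\Omega)\cup B_\vr$, which must produce two contributions of the \emph{same} sign for $\Phi$ to have the stated form, and the $\cC^1$-uniform control of the integral over the shrinking ball $B_{\vr/\delta}$ in $(d,\zeta)$—are both handled correctly and constitute genuinely useful added detail.
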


\begin{proof}
Recall that $\|\phi_{\delta,\xi}\|<c\vr^\frac{n+6}{4}$ (by Proposition~\ref{rid1hole}). Then, replacing $Q_\o$ with $Q_{\o_\vr}$ and setting $\eps=0$ in \eqref{eq:J1} and \eqref{eq:J4}, and using \eqref{eq:J2}, we see that
 $$J_\vr(U_{\delta,\xi}+\phi_{\delta,\xi}) = J_\vr(U_{\delta,\xi})+ O(\vr^{\frac{n+6}{2}}).$$
Furthermore,
\begin{align*}
J_\vr\left(U_{\delta,\xi} \right)&=\frac12\irn|\nabla U_{\delta,\xi}|^2 -\frac1{p+1 }\int_{\Omega_\vr}| U_{\delta,\xi}|^{p+1}+
 \frac1{p+1 }\int_{\mathbb R^n\setminus\Omega_\vr}| U_{\delta,\xi}|^{p+1 }\\
 &=\frac12\int_{\mathbb R^n}|\nabla U_{\delta,\xi}|^2
 -\frac1{p+1 }\int_{\mathbb R^n}| U_{\delta,\xi}|^{p+1 }+
\frac{2}{p+1}\int_{\mathbb R^n\setminus\Omega_\vr}| U_{\delta,\xi}|^{p+1}\\
 &=\underbrace{\frac12\int_{\mathbb R^n}|\nabla U_{\delta,\xi}|^2-\frac1{p+1}\int_{\mathbb R^n}| U_{\delta,\xi}|^{p+1}}_{=:\mathfrak a} + \frac{2}{p+1}\int_{\mathbb R^n\setminus\Omega}| U_{\delta,\xi}|^{p+1 } -\frac{2}{p+1}\int_{ B_\vr}| U_{\delta,\xi}|^{p+1 },
 \end{align*}
where $B_\vr$ is the ball of radius $\vr$ centered at $0$. A straightforward computation shows that
\begin{align*}
\frac{2}{p+1}\int_{\mathbb R^n\setminus\Omega}| U_{\delta,\xi}|^{p+1 }&=\delta^n\Big(\frac{2}{p+1}\alpha_n^{p+1}\int_{\mathbb R^n\setminus\Omega}\frac 1{|x-\xi|^{2n}}+o(1)\Big)\\
&=\vr^\frac{n}{2}d^n\Big(\underbrace{\frac{2}{p+1}\alpha_n^{p+1}\int_{\mathbb R^n\setminus\Omega}\frac 1{|x|^{2n}}}_{=:\mathfrak b_1}\Big)+o(\vr^\frac{n}{2}).
\end{align*}
Now, setting $x=\delta y$ yields
\begin{align*}
&\frac{2}{p+1}\int_{ B_\vr}| U_{\delta,\xi}|^{p+1 }=\frac2{p+1 }\alpha_n^{p+1}\int_{|x|\le\vr}{\frac{\delta^n}{(\delta^2+|x-\delta\zeta|^2)^n}}\d x=\frac2{p+1 }\alpha_n^{p+1}\int_{|y|\le\rho/\delta}{\frac{1}{(1+|y-\zeta|^2)^n}}\d y\\
&\qquad =\frac{2}{p+1}\alpha_n^{p+1}|B_1|\left(\frac{\vr}{\delta}\right)^n\left[ {\frac{1}{(1+|\zeta|^2)^n}}+o(1)\right] =\underbrace{\frac{2}{p+1 }\alpha_n^{p+1}|B_1|}_{=:\mathfrak b_2}\vr^\frac{n}{2}\frac{1}{d^n}\left[ {\frac{1}{(1+|\zeta|^2)^n}}\right]+o(\vr^\frac{n}{2}).
\end{align*}
A standard argument shows that the expansion holds true $\cC^1$-uniformly in $\overline{\t}$ as $\vr\to 0$; see, e.g., \cite[Section 7]{gmp}.
\end{proof}
\smallskip

\begin{proof}[Proof of Theorem~\ref{main3}(completed)]
The point $(d_0,0)$ with $d_0=(\frac{\mathfrak{b}_2}{\mathfrak{b}_1})^\frac{1}{2n}$ is the only critical point of the function $\Phi$. It is a nondegenerate saddle point, stable under $\cC^1$-perturbations. Therefore, for small enough $\vr$, the reduced energy $\widetilde J_\vr$ has a critical point that converges to $(d_0,0)$ as $\vr\to 0$, and problem \eqref{eq:p_critical} has a solution $u_\vr$ concentrating at $0$, whose energy satisfies $J_\vr(u_\vr)\to \mathfrak{a}=\frac{1}{n}S^\frac{n}{2}$. By Lemma~\ref{lem:positive solution}$(ii)$, we have that $u_\vr$ is strictly positive in $\rn$.
 \end{proof}

 \appendix

\section{A generic property}

 Let $\Omega\subset \rn$ be a smooth open bounded set. Here we prove that, up to a small perturbation, the function
 \begin{align*}
\psi_\Omega(x):= \int_{\mathbb R^{n}\setminus \Omega}\frac1{|x-y|^{2n}}dy,\qquad x\in\Omega,
 \end{align*}
is a Morse function. Recall that a $\cC^2$-function $\theta:\o\to\r$ is a \emph{Morse function} if all of its critical points are nondegenerate. To be more precise, for $m\in\mathbb N,$ let $\mathfrak{C}^m $ denote the Banach space of all the $\cC^m$-maps $\theta:\mathbb R^n\to\mathbb R^n$ such that
\begin{equation*}
\|\theta\|_m:=\max_{i=1,\ldots,n}\max\limits_{0\le|\ell|\le m}\sup\limits_{x\in\mathbb R^n}\left|
\frac{\partial ^\ell \theta_i(x)}{\partial_ {x_1}^{\ell_1}\dots\partial _{x_n}^{\ell_n}}
\right|<\infty,
\end{equation*}
equipped with the norm $\|\cdot\|_m$.
Let $\mathfrak{B}_\rho:=\left\{\theta\in \mathfrak{C}^2 \ :\ \|\theta\|_2\le\rho\right\}$
be the ball of radius $\rho$ in $\mathfrak{C}^2$ centered at $0$, and set
$$\Omega_\theta:=\{x+\theta x:x\in\o\},\qquad \theta\in\mathfrak{C}^2.$$
We prove the following result.

\begin{theorem}\label{mainap}
There exists $\rho>0$ such that the set
$$\{\theta\in \mathfrak{B}_\rho: \psi_{\o_\theta}\text{ is a Morse function}\}$$
is a residual\textemdash hence, dense\textemdash subset of $\mathfrak{B}_\rho$.
\end{theorem}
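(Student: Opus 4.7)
The approach is parametric transversality (Sard--Smale) applied to the pulled-back gradient.

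For $\rho$ small enough, $\Phi_\theta:=\mathrm{id}+\theta$ is a $\cC^2$-diffeomorphism of $\rn$ for every $\theta\in\mathfrak{B}_\rho$. Define $\tilde\psi_\theta:=\psi_{\Omega_\theta}\circ\Phi_\theta:\Omega\to\r$; a change of variables gives
\[
\tilde\psi_\theta(x)=\int_{\rn\setminus\Omega}\frac{\det D\Phi_\theta(z)}{|\Phi_\theta(x)-\Phi_\theta(z)|^{2n}}\d z.
\]
Because $\Phi_\theta$ is a diffeomorphism, nondegeneracy of critical points is preserved, so it suffices to make $\tilde\psi_\theta$ Morse for a residual set of $\theta$. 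Since $\psi_{\Omega_\theta}$ blows up on $\partial\Omega_\theta$, every critical point of $\tilde\psi_\theta$ lies in $V_m:=\{x\in\Omega:\mathrm{dist}(x,\partial\Omega)>1/m\}$ for some $m$. Letting $\cM_m:=\{\theta\in\mathfrak{B}_\rho:\text{every zero of }\nabla\tilde\psi_\theta\text{ in }V_m\text{ is nondegenerate}\}$, the target set equals $\bigcap_{m\ge 1}\cM_m$, and it suffices that each $\cM_m$ be open and dense in $\mathfrak{B}_\rho$.

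Openness of $\cM_m$ is a standard implicit-function-theorem argument: nondegenerate zeros of $\nabla\tilde\psi_{\theta_0}$ persist under small $\cC^2$-perturbations of $\theta$, and the gradient remains uniformly bounded away from zero on the compact complement of their neighborhoods in $\overline{V_m}$. For density, consider
\[
F:\mathfrak{B}_\rho\times V_m\to\rn,\qquad F(\theta,x):=\nabla_x\tilde\psi_\theta(x).
\]
Because $|\Phi_\theta(x)-\Phi_\theta(z)|\ge c_m>0$ on $V_m\times(\rn\setminus\Omega)$ uniformly in $\theta$, $F$ is a $\cC^\infty$ map between Banach manifolds. Granted that $0$ is a regular value of $F$, the Sard--Smale parametric transversality theorem produces a residual $\Theta_m\subset\mathfrak{B}_\rho$ on which $0$ is a regular value of $F(\theta,\cdot)$; since $D_xF(\theta,x)=\mathrm{Hess}\,\tilde\psi_\theta(x)$, this is exactly the condition $\Theta_m\subset\cM_m$.

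The crux is the regular-value claim: at each zero $(\theta_0,x_0)$ of $F$, the partial derivative $D_\theta F(\theta_0,x_0):\mathfrak{C}^2\to\rn$ must be surjective. After replacing $\Omega$ by $\Omega_{\theta_0}$, take $\theta_0=0$. Differentiating $F(\theta,x_0)=D\Phi_\theta(x_0)^T\nabla\psi_{\Omega_\theta}(\Phi_\theta(x_0))$ at $\theta=0$ and using $\nabla\psi_\Omega(x_0)=0$ gives
\[
D_\theta F(0,x_0)[\eta]=\tfrac{\d}{\d t}\Big|_{t=0}\nabla\psi_{\Omega_{t\eta}}(x_0)+\mathrm{Hess}\,\psi_\Omega(x_0)\,\eta(x_0).
\]
Choosing $\eta$ supported in a tubular neighborhood of $\partial\Omega$ disjoint from $x_0$ kills the Hessian term; Reynolds' transport formula applied to $\psi_{\Omega_\theta}(y)=\int_{\Omega_\theta^c}|y-w|^{-2n}\d w$, followed by differentiation under the integral in $y$ at $y=x_0$, yields the clean identity
\[
D_\theta F(0,x_0)[\eta]=2n\int_{\partial\Omega}\frac{x_0-z}{|x_0-z|^{2n+2}}\,\langle\eta(z),\nu(z)\rangle\,\d S(z).
\]
Any $h\in\cC^2(\partial\Omega)$ is realizable as $\langle\eta,\nu\rangle$ for some admissible $\eta\in\mathfrak{C}^2$, so surjectivity reduces to showing that the vector kernel $K(z):=(x_0-z)|x_0-z|^{-2n-2}$ spans $\rn$ as $z$ ranges over $\partial\Omega$. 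If $\langle K(z),w\rangle\equiv 0$ for some $w\in\rn$, then $\partial\Omega$ would lie in the affine hyperplane $\{\langle z,w\rangle=\langle x_0,w\rangle\}$, impossible for the boundary of a bounded open set unless $w=0$. Hence bumps $h_1,\ldots,h_n$ localized near points $z_1,\ldots,z_n\in\partial\Omega$ with linearly independent $K(z_j)$ realize any prescribed $v\in\rn$. The main technical obstacle is performing this shape-derivative computation rigorously and justifying the span property of $K$; once these are in hand, the Sard--Smale machinery finishes the proof.
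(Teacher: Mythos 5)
Your proposal is correct and follows the same overall parametric-transversality strategy as the paper, but the crucial surjectivity computation is handled in a genuinely different (and cleaner) way.

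The paper, after fixing a zero $(x,\theta)$ of $F$, works directly at general $\theta$ and writes $D_\theta\widehat F(x,\theta)[\eta]$ as a sum of three integrals $I_1[\eta]+I_2[\eta]+I_3[\eta]$ over $\rn\setminus\Omega$. It then inserts a one-parameter family of bump functions $\eta_{\eps,\zeta}=\chi((\cdot-\zeta)/\eps)\,\nu_\zeta$ supported near a boundary point $\zeta$, shows $I_1,I_3=O(\eps^n)$ while $I_2$ has a leading term of order $\eps^{n-1}$ proportional to $x-\zeta+\theta(x)-\theta(\zeta)$, and finally picks boundary points $\tilde\zeta_i=x+\theta(x)+t_ie_i\in\partial\Omega_\theta$ along the coordinate axes to produce $n$ linearly independent images. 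Your approach instead first reduces to $\theta_0=0$ (by replacing $\Omega$ with $\Omega_{\theta_0}$, which is legitimate but deserves the explicit remark that the reparametrization $\eta\mapsto(\mathrm{id}+\eta)\circ(\mathrm{id}+\theta_0)-\mathrm{id}$ is a local diffeomorphism of $\mathfrak{C}^2$ near $\theta_0$, so surjectivity of the $\theta$-derivative is unaffected), kills the Hessian term by choosing $\eta$ supported away from $x_0$, and then applies the Hadamard shape-derivative (Reynolds transport) formula to obtain a clean boundary integral $D_\theta F(0,x_0)[\eta]=2n\int_{\partial\Omega}(x_0-z)|x_0-z|^{-2n-2}\langle\eta,\nu\rangle\,\d S$. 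Surjectivity then reduces to the elementary observation that no nonzero $w\in\rn$ can annihilate $(x_0-z)$ for every $z\in\partial\Omega$, since $\partial\Omega$ is not contained in a hyperplane. This buys a much more transparent linear-algebra argument at the cost of the reduction to $\theta_0=0$; the paper's argument, while computationally heavier (asymptotic estimates on three terms as $\eps\to 0$), avoids that reduction and stays uniform in $\theta$. Two small points to make rigorous in your version: justify interchanging $\nabla_y$ with the shape derivative (trivial since $\mathrm{dist}(x_0,\partial\Omega)>0$), and note that $F$ is only $\cC^1$ in $x$ (since $\theta\in\mathfrak{C}^2$), which is still enough for a Fredholm index-$0$ Sard--Smale argument but is not $\cC^\infty$. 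Your manual decomposition of the target set into sets $\cM_m$ and the openness/density argument is equivalent in effect to the paper's use of the abstract transversality theorem (their Theorem~\ref{tran}) with the closed exhaustion $A_j$ and the properness of $\pi|_{A_j}$.
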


Recall that \emph{$A$ is a residual subset of $B$} if $B\setminus A$ is a countable union of closed subsets with empty interior.

Consider the function $f:\o\times\mathfrak{B}_\rho\to\r$ given by
$$f(x,\theta):=\psi_{\o_\theta}(x+\theta(x)),$$
and define $F:\o\times\mathfrak{B}_\rho\to\rn$ to be
\begin{equation}\label{eq:F}
F(x,\theta):=\nabla_xf(x,\theta)=\nabla\psi_{\o_\theta}(x+\theta(x))[I+D\theta(x)].
\end{equation}
We begin with the following remark. 

\begin{lemma}
There exists $\rho_0>0$ such that, if $\theta\in\mathfrak{B}_\rho$, $\rho\in(0,\rho_0)$, and $0$ is a regular value of the function $x\mapsto F(x,\theta)$, then $\psi_{\o_\theta}$ is a Morse function.
\end{lemma}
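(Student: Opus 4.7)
The plan is to exploit the fact that, for small enough $\rho_0$, the map $\Psi_\theta:x\mapsto x+\theta(x)$ is a $\cC^2$-diffeomorphism of $\rn$ sending $\Omega$ onto $\Omega_\theta$, and to use it as a change of coordinates that identifies critical points of $x\mapsto f(x,\theta)=\psi_{\Omega_\theta}(\Psi_\theta(x))$ with those of $\psi_{\Omega_\theta}$, and nondegeneracy on one side with nondegeneracy on the other.

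First I would fix $\rho_0\in(0,1)$ so that $\|D\theta\|_\infty<1$ for every $\theta\in\mathfrak{B}_{\rho_0}$; then $M(x):=I+D\theta(x)$ is invertible for every $x\in\rn$ and $\Psi_\theta$ is a global $\cC^2$-diffeomorphism. By the chain rule,
\begin{equation*}
F(x,\theta)=\nabla\psi_{\Omega_\theta}(\Psi_\theta(x))\,M(x),
\end{equation*}
so, since $M(x)$ is invertible, $F(x,\theta)=0$ is equivalent to $\nabla\psi_{\Omega_\theta}(y)=0$, where $y:=\Psi_\theta(x)\in\Omega_\theta$. Thus $\Psi_\theta$ carries the zero set of $F(\cdot,\theta)$ bijectively onto the critical set of $\psi_{\Omega_\theta}$. (Note that $\psi_{\Omega_\theta}$ is smooth in the interior of $\Omega_\theta$, as it is the integral over the complement of a function with no singularity there, so the Hessian makes sense.)

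Next I would compute the derivative $D_xF(x,\theta)$ at a point $x$ with $F(x,\theta)=0$. Writing $F_j(x,\theta)=\sum_k M_{kj}(x)\,\partial_k\psi_{\Omega_\theta}(\Psi_\theta(x))$ and differentiating with the product and chain rules gives
\begin{equation*}
\partial_{x_i}F_j(x,\theta)=\sum_k(\partial_{x_i}M_{kj})(x)\,\partial_k\psi_{\Omega_\theta}(y)+\sum_{k,l}M_{kj}(x)M_{li}(x)\,\partial_l\partial_k\psi_{\Omega_\theta}(y).
\end{equation*}
At a critical point $\nabla\psi_{\Omega_\theta}(y)=0$, the first sum vanishes, and therefore
\begin{equation*}
D_xF(x,\theta)=M(x)^T\,D^2\psi_{\Omega_\theta}(y)\,M(x).
\end{equation*}
Since $M(x)$ is invertible, $D_xF(x,\theta)$ is nonsingular if and only if $D^2\psi_{\Omega_\theta}(y)$ is nonsingular. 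Consequently, the assumption that $0$ is a regular value of $x\mapsto F(x,\theta)$ translates, via the bijection $\Psi_\theta$, into the statement that $D^2\psi_{\Omega_\theta}(y)$ is nonsingular at every critical point $y$ of $\psi_{\Omega_\theta}$, i.e., $\psi_{\Omega_\theta}$ is a Morse function.

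The computation is essentially bookkeeping; the only thing one has to be careful about is that the cross term involving second derivatives of $\theta$ against $\nabla\psi_{\Omega_\theta}$ really does drop out (which requires one to evaluate at a critical point, not elsewhere), and that the change of variables $\Psi_\theta$ is a bona fide diffeomorphism, which is guaranteed by the smallness of $\rho_0$. No other obstacle is anticipated.
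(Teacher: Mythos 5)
Your proof is correct and follows essentially the same approach as the paper; the paper compresses the nondegeneracy step into a single word ("Similarly"), whereas you explicitly compute $D_xF(x,\theta)=M(x)^T D^2\psi_{\Omega_\theta}(y)\,M(x)$ at a critical point, which is precisely the calculation the paper has in mind.
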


\begin{proof}
By definition, $0$ is a regular value of $x\mapsto F(x,\theta)$ if and only if $D_xF(x,\theta)$ is an isomorphism for every $x$ such that $F(x,\theta)=0$. Now, if $\rho$ is small enough, then $I+D\theta(x)$ is a linear isomorphism. Therefore, $F(x,\theta)=0$ if and only if $\nabla\psi_{\o_\theta}(x+\theta(x))=0$, i.e., if and only if $x+\theta(x)$ is a critical point of $\psi_{\o_\theta}$. Similarly, if $D_xF(x,\theta)$ is an isomorphism, then $x+\theta(x)$ is nondegenerate.
\end{proof}
 
So, to prove Theorem~\ref{mainap}, we will show that, for $\rho$ small enough, the set
$$\{\theta\in \mathfrak{B}_\rho:0\text{ is a regular value of the function }x\mapsto F(x,\theta)\}$$
is a residual subset of $\mathfrak{B}_\rho$. We use the following abstract transversality theorem, see \cite{Q,ST,U}.

 \begin{theorem}\label{tran}
 Let $X,Y,Z$ be Banach spaces and $U\subset X,$ $V\subset Y$ be open subsets.
 Let $F:U\times V\to Z$ be a $C^k$-map with $k\ge1.$ Assume that
 \begin{itemize}
 \item[$i)$] for any $y\in V$, \ $F(\cdot,y):U\to Z$ is a Fredholm map of index $m$ with $m\le k,$
 \item[$ii)$] $0$ is a regular value of $F$, i.e., the linear operator $F'(x,y):X\times Y\to Z$ is onto at any point $(x,y)$ such that $F(x,y)=0,$
 \item[$iii)$] $F^{-1}(0)=\cup_{j\in\mathbb{N}} A_j$ with $A_j$ closed in $X\times Y$, and the restriction $\pi|_{A_j}$ to $A_j$ of the projection $\pi:X\times Y\to Y$ is a proper map for every $j\in\mathbb{N}$.
 \end{itemize}
 Then, the set \ $\left\{y\in V:0\hbox{ is  a regular value of } F(\cdot,y)\right\}$ \ is a  residual subset of $V$.
\end{theorem}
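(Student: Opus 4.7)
The plan is to reduce Theorem~\ref{tran} to the classical Sard--Smale theorem for Fredholm maps between Banach manifolds. The first step is to use assumption (ii) together with the implicit function theorem to show that $\mathcal{M}:=F^{-1}(0)$ is a $C^k$-Banach submanifold of $U\times V$, whose tangent space at $(x,y)\in\mathcal{M}$ coincides with $\ker F'(x,y)$. The splitting required for the implicit function theorem is obtained by combining the Fredholmness of $F(\cdot,y)$ with the surjectivity of $F'(x,y)$: the subspace $\ker F'(x,y)$ admits a closed complement in $X\times Y$, built from a finite-codimensional complement of $\ker D_xF(x,y)$.

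Next, I would analyze the restricted projection $\pi|_\mathcal{M}:\mathcal{M}\to V$. At a point $(x,y)\in\mathcal{M}$ its differential is $(h,k)\mapsto k$ restricted to $\ker F'(x,y)$. A short linear-algebra computation that uses the surjectivity of $F'(x,y)$ gives natural isomorphisms
\begin{equation*}
\ker D(\pi|_\mathcal{M})(x,y)\cong \ker D_xF(x,y)\quad\text{and}\quad \mathrm{coker}\,D(\pi|_\mathcal{M})(x,y)\cong \mathrm{coker}\,D_xF(x,y).
\end{equation*}
Consequently, $\pi|_\mathcal{M}$ is a $C^k$-Fredholm map of the same index $m$ as $F(\cdot,y)$.

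The third step is to invoke assumption (iii) together with the Sard--Smale theorem. For each $j\in\mathbb{N}$, the set $A_j\cap\mathcal{M}$ is closed in $\mathcal{M}$ and the restriction $\pi|_{A_j\cap\mathcal{M}}$ inherits properness from $\pi|_{A_j}$. Sard--Smale, applicable since $k\ge m$, yields that the set of critical values of $\pi|_{A_j\cap\mathcal{M}}$ is of first Baire category in $V$; properness in addition implies that this set is closed, hence nowhere dense. Taking the union over $j$, the critical values of $\pi|_\mathcal{M}$ form an $F_\sigma$-set of first category, so its complement, the set of regular values of $\pi|_\mathcal{M}$, is residual in $V$.

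Finally, I would prove the key equivalence: $y\in V$ is a regular value of $\pi|_\mathcal{M}$ if and only if $0$ is a regular value of $F(\cdot,y)$. The \emph{if} direction is a direct diagram chase. For the \emph{only if}, fix any $(x,y)\in\mathcal{M}$ and any $z\in Z$; by surjectivity of $F'(x,y)$ choose $(h_1,k_1)\in X\times Y$ with $D_xF(x,y)h_1+D_yF(x,y)k_1=z$, and by regularity of $\pi|_\mathcal{M}$ at $(x,y)$ choose $h'\in X$ with $(h',k_1)\in \ker F'(x,y)$, i.e.\ $D_xF(x,y)h'=-D_yF(x,y)k_1$; then $D_xF(x,y)(h_1-h')=z$, so $D_xF(x,y)$ is surjective. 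The main obstacle I anticipate is stitching the three ingredients together: making the implicit function theorem give a genuine global $C^k$-manifold structure on $\mathcal{M}$, and combining local Sard--Smale information with the $\sigma$-properness from (iii) to pass from residuality on each piece $\pi(A_j\cap\mathcal{M})$ to residuality of the full set of regular values in $V$.
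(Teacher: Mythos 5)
The paper does not prove Theorem~\ref{tran} at all: it is quoted as a known abstract transversality result with references to Quinn, Saut--Temam and Uhlenbeck \cite{Q,ST,U}, so there is no in-paper argument to compare against. Your outline is in fact the classical proof strategy from those references: give $\mathcal M=F^{-1}(0)$ a $\cC^k$ Banach manifold structure using surjectivity of $F'$ and the splitting of $\ker F'(x,y)$ (which you correctly obtain from Fredholmness of $D_xF$, via a finite-codimensional complement of $\ker D_xF$ plus a finite-dimensional space mapping onto a complement of the range), show that $\pi|_{\mathcal M}$ is Fredholm of the same index $m$ with $\ker$ and $\mathrm{coker}$ of its differential isomorphic to those of $D_xF$, and prove the equivalence ``$y$ is a regular value of $\pi|_{\mathcal M}$ iff $0$ is a regular value of $F(\cdot,y)$''; these three steps are correct as you state them, including the diagram chase in the last one.

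The genuine gap is in the third step, where you invoke Sard--Smale globally on $\pi|_{A_j\cap\mathcal M}$ to conclude that its critical values are of first category. Smale's theorem requires the source manifold to be second countable (Lindel\"of) -- that is what allows patching the local ``nowhere dense critical values'' statements into a meager set -- and nothing in the hypotheses makes $A_j\cap\mathcal M$ separable: $V$ is an open subset of a Banach space, so properness of $\pi|_{A_j}$ does not give $\sigma$-compactness or second countability. Hypothesis $iii)$ is precisely the substitute for second countability, and it must be used for more than closedness of the critical values. The standard repair (this is how Saut--Temam and Quinn argue) is to set $V_j:=\{y\in V:\ \text{every }x\text{ with }(x,y)\in A_j\text{ is a regular point of }F(\cdot,y)\}$ and show: $V_j$ is \emph{open}, because the critical set $C$ of $\pi|_{\mathcal M}$ is closed and $\pi|_{A_j}$, being proper into a metric space, is a closed map, so $\pi(C\cap A_j)$ is closed; and $V_j$ is \emph{dense}, by fixing $y_0$, using properness to get compactness of the fiber $A_j\cap\pi^{-1}(y_0)$, covering it by finitely many charts in which the implicit function theorem (Lyapunov--Schmidt, exploiting Fredholmness) reduces $\pi|_{\mathcal M}$ to a finite-dimensional map, and applying the classical Sard theorem there to find regular parameters arbitrarily close to $y_0$. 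Then $\bigcap_j V_j$ is residual and equals the set in the conclusion via your step four. As a minor point, note also that Sard--Smale needs $\cC^k$ with $k>\max\{m,0\}$, so your phrase ``applicable since $k\ge m$'' should be adjusted accordingly when $m\ge 1$.
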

\smallskip

\begin{proof}[Proof of Theorem~\ref{mainap}]
We show that, for small enough $\rho$, the function $F$ defined in \eqref{eq:F} satisfies the conditions $i)$, $ii)$, and $iii)$ of Theorem \ref{tran} with $\Omega=:U\subset X:=\rn$, $\mathfrak{B}_\rho=:V\subset Y:=\mathfrak{C}^2$, and $Z=\rn$.

$i):$ This assumption is trivially satisfied because every $\cC^1$-map $\o\to\rn$ is Fredholm of index $0$.

$iii):$ Set
$$A_j:=\big(\o_j^-\times\overline{\mathfrak{B}}_{\rho-\frac{1}{j}}\big)\cap F^{-1}(0),\qquad\text{where \ }\o_j^-:=\{x\in\o:\mathrm{dist}(x,\partial\o)\geq \tfrac{1}{j}\}$$
and $\overline{\mathfrak{B}}_r$ is the closed ball of radius $r$ in $\mathfrak{C}^2$ centered at $0$. Clearly, $F^{-1}(0)=\cup_{j\in\mathbb{N}} A_j$. Next, we show that the restriction $\pi|_{A_j}$ to $A_j$ of the projection $\pi:\rn\times \mathfrak{C}^2\to \mathfrak{C}^2$ is a proper map for every $j\in\mathbb{N}$.  Let $\mathfrak{K}$ be a compact subset of $\mathfrak{C}^2$ and $(x_k,\theta_k)$ be a sequence in $A_j\cap\pi^{-1}(\mathfrak{K})$. Then there is a subsequence such that $x_k\to x$ in $\o_j^-$ and $\theta_k\to\theta$ in $\mathfrak{K}$ (because $\mathfrak{K}$ is compact and $(\theta_k)\subset \mathfrak{K}$). This shows that $A_j\cap\pi^{-1}(\mathfrak{K})$ is compact, i.e., $\pi|_{A_j}$ is proper.

$ii):$ Let $(x,\theta)\in \Omega\times \mathfrak{B}_\rho$.  The change of variables $z=y+\theta(y)$ yields
 \begin{align*}
f(x,\theta)= \psi_{\Omega_\theta}(x+\theta(x))=\int_{\mathbb R^n\setminus \Omega_\theta}\frac1{|x+\theta(x)-z|^{2n}}\d z =\int_{\mathbb R^n\setminus \Omega }\frac1{|x-y+\theta(x)-\theta(y)|^{2n}}\left|\mathtt J_\theta(y)\right|\d y,
 \end{align*}
where $|\mathtt J_\theta(y)| = \mathrm{det}(I+D\theta(y))$ is the Jacobian determinant associated to the change of variables. Now, 
\begin{align*} 
F(x,\theta)=\nabla _x f(x,\theta)=-2n\Big(\underbrace{\int_{\mathbb R^n\setminus \Omega }\frac{x-y+\theta(x)-\theta(y)}{|x-y+\theta(x)-\theta(y)|^{2n+2}}\left|\mathtt J_\theta(y)\right|\d y}_{=:\what F(x,\theta)}\Big)(I+D\theta(x)).
\end{align*}
Fix $(x,\theta)\in\Omega\times\mathfrak{B}_\rho$ such that $F(x,\theta)=0$. To prove condition $ii)$, it suffices to show that
\begin{align}\label{claim:ap}
\text{the map $D_\theta F(x,\theta):\mathfrak{C}^2\to\mathbb R^n$ is surjective.  }
\end{align}

Note that, as $I+D\theta(x)$ is an isomorphism for $\rho$ small enough, we have that $\what F(x,\theta)=0$ and, as a consequence,
$$D_\theta F(x,\theta)[\eta]=-2n\big(D_\theta\what F(x,\theta)[\eta](I+D\theta(x)) + \what F(x,\theta)D\eta(x)\big)=-2n D_\theta\what F(x,\theta)[\eta](I+D\theta(x)),\qquad\eta\in\mathfrak{C}^2$$
(to compute this derivative we have used that, for fixed $x$, the map $\mathfrak{C}^2\to\mathscr{L}(\rn,\rn)$ given by $\eta\mapsto D\eta(x)$ is linear and continuous, and the map $\rn\times\mathscr{L}(\rn,\rn)\to\rn$ given by $(z,A)\mapsto zA$ is bilinear, where $\mathscr{L}(\rn,\rn)$ is the space of linear maps from $\rn$ to $\rn$.) Therefore, to show \eqref{claim:ap} it suffices to prove that $D_\theta \what F(x,\theta):\mathfrak{C}^2\to\mathbb R^n$ is surjective.

Using that the derivative of the determinant function $\det:\mathscr{L}(\rn,\rn)\to\r$ at an invertible matrix $A$ is given by $D\det(A)[B]=\det(A)\mathrm{tr}(A^{-1}B)$ we obtain
\begin{align*}
D_\theta \what F(x,\theta)[\eta] &=\int_{\mathbb R^n\setminus \Omega } \frac{ \eta(x)-\eta(y)}{|x-y+\theta(x)-\theta(y)|^{2n+2}}|\mathtt J_{ \theta} y|\d y\\
&\quad+ \int_{\mathbb R^n\setminus \Omega }\frac{x-y+\theta(x)-\theta(y)}{|x-y+\theta(x)-\theta(y)|^{2n+2}}\,\mathrm{tr}\big((I+D \theta(y))^{-1}D\eta(y)\big)\,|\mathtt J_{\theta}(y)|\d y\\ 
&\quad-\int_{\mathbb R^n\setminus \Omega }
 (2n+2)\frac{\langle\eta(x)-\eta(y),x-y+\theta(x)-\theta(y)\rangle\,(x-y+\theta(x)-\theta(y))}{|x-y+\theta(x)-\theta(y)|^{2n+4}} |\mathtt J_{ \theta}y|\d y \\
 &=: I_1[\eta]+I_2[\eta]+I_3[\eta].
\end{align*}
Fix $\zeta\in\partial\o$ and let $\nu_\zeta$ be the exterior unit normal to $\o$ at $\zeta$. Fix a radial function $\chi\in\cC^\infty_c(\rn)$ such that $\chi(0)=1$, $\chi(x)=0$ if $|x|\geq 1$ and $\chi$ is nonincreasing in the radial direction, and set
$$\eta_\zeta(y):=\chi(y)\,\nu_\zeta,\qquad\chi_{\eps,\zeta}(y):=\chi\Big(\frac{y-\zeta}{\eps}\Big)\quad\text{and}\quad\eta_{\eps,\zeta}(y):=\eta_\zeta\Big(\frac{y-\zeta}{\eps}\Big)=\chi_{\eps,\zeta}(y)\,\nu_\zeta\text{ \ \ for \ }\eps>0.$$
Then, for $\eps$ small enough,
\begin{align*}
I_1[\eta_{\eps,\zeta}] &=\int_{B_\eps(\zeta)\setminus\o} \frac{-\chi_{\eps,\zeta}(y)\,\nu_\zeta}{|x-y+\theta(x)-\theta(y)|^{2n+2}}|\mathtt J_{ \theta} y|\d y, \\
I_3[\eta_{\eps,\zeta}] &=-(2n+2)\int_{B_\eps(\zeta)\setminus\o}\chi_{\eps,\zeta}(y)\frac{\langle-\nu_\zeta,\,x-y+\theta(x)-\theta(y)\rangle\,(x-y+\theta(x)-\theta(y))}{|x-y+\theta(x)-\theta(y)|^{2n+4}} |\mathtt J_{ \theta}y|\d y.
\end{align*}
Therefore, 
\begin{equation}\label{eq:I_1}
|I_1[\eta_{\eps,\zeta}]|=O(\eps^n)\qquad\text{and}\qquad|I_3[\eta_{\eps,\zeta}]|=O(\eps^n)\qquad \text{ as }\eps\to 0.
\end{equation}
Now, setting $y=\eps z+\zeta$, \ $\o_\eps:=\{z\in\rn:\eps z+\zeta\in\o\}$, and applying Lebesgue's theorem we derive
\begin{align*}
I_2[\eta_\eps] &=\frac{1}{\eps}\int_{B_\eps(\zeta)\setminus\o}\frac{x-y+\theta(x)-\theta(y)}{|x-y+\theta(x)-\theta(y)|^{2n+2}}\,\mathrm{tr}\Big((I+D\theta(y))^{-1}D\eta_\zeta\Big(\frac{y-\zeta}{\eps}\Big)\Big)\,|\mathtt J_{\theta}(y)|\d y \\
&=\eps^{n-1}\int_{B_1(0)\setminus\o_\eps}\frac{x-\eps z-\zeta+\theta(x)-\theta(\eps z+\zeta)}{|x-\eps z-\zeta+\theta(x)-\theta(\eps z+\zeta)|^{2n+2}}\,\mathrm{tr}\big((I+D\theta(\eps z+\zeta))^{-1}D\eta_\zeta(z)\big)\,|\mathtt J_{\theta}(\eps z+\zeta)|\d z \\
&=\eps^{n-1}\left(\frac{x-\zeta+\theta(x)-\theta(\zeta)}{|x-\zeta+\theta(x)-\theta(\zeta)|^{2n+2}}\,|\mathtt J_{\theta}(\zeta)|\int_{B_1(0)\cap\mathbb{H}_{\nu_\zeta}}\mathrm{tr}\big((I+D\theta(\zeta))^{-1}D\eta_\zeta(z)\big)\d z +o(1)\right),
\end{align*}
where $\mathbb{H}_{\nu_\zeta}:=\{z\in\rn:\langle z,\nu_\zeta\rangle>0\}$ (here we used the smoothness of the domain). Now, $(I+D\theta(\zeta))^{-1}=I+A(\zeta)$, where the entries of $A(\zeta)$ tend to $0$ as $\rho\to 0$. Therefore,
\begin{align*}
c_\zeta:=\int_{B_1(0)\cap\mathbb{H}_{\nu_\zeta}}\mathrm{tr}\big((I+D\theta(\zeta))^{-1}D\eta_\zeta(z)\big)\d z
= \int_{B_1(0)\cap\mathbb{H}_{\nu_\zeta}}\langle\nabla\chi(z),\nu_\zeta\rangle\d z + \int_{B_1(0)\cap\mathbb{H}_{\nu_\zeta}}\mathrm{tr}\big(A(\zeta)D\eta_\zeta(z)\big)\d y.
\end{align*}
The first summand on the right-hand side is strictly negative. So, for $\rho$ sufficiently small (depending only on $\chi$), we have that $c_\zeta<0$. Thus,
$$a_\zeta:=\frac{|\mathtt J_{\theta}(\zeta)|\,c_\zeta}{|x-\zeta+\theta(x)-\theta(\zeta)|^{2n+2}}<0.$$
Summing up, we have shown that
\begin{align*}
D_\theta\what F(x,\theta)[\eta_{\eps,\zeta}]=\eps^{n-1}\Big(a_\zeta (x-\zeta+\theta(x)-\theta(\zeta)) + o(1)\Big)\qquad \text{ as }\eps\to 0.
\end{align*}
All we need to do now is to show that there exist $\zeta_1,\ldots,\zeta_n\in\partial\o$ such that $x-\zeta_1+\theta(x)-\theta(\zeta_1),\ldots,x-\zeta_n+\theta(x)-\theta(\zeta_n)$ are linearly independent. To this end, let $\{e_1,\ldots,e_n\}$ be the canonical basis in $\rn$. Then, there is $t_i>0$ such that $\tilde\zeta_i=x+\theta(x)+t_ie_i\in\partial\o_\theta$ for every $i=1,\ldots,n$. Let  $\zeta_i\in\partial\o$ be such that $\tilde\zeta_i=\zeta_i + \theta(\zeta_i)$. Then, $x-\zeta_i+\theta(x)-\theta(\zeta_i)=-t_ie_i$, so these vectors are linearly independent and, thus, $ii)$ is satisfied. The claim now follows from Theorem \ref{tran}.
\end{proof}

\section{An invertibility result}

Fix $\xi\in\mathbb R^n$ and $\delta>0$. Here we give the details on the invertibility of $\mathscr L_{1,0}:K^\perp_{\delta,\xi}\to K^\perp_{\delta,\xi}$.  We follow \cite[Lemma 2.13]{mp} and here we include a proof for completeness. Recall that
\begin{align*}
 \mathscr L_{\delta,\xi}(\phi)&:=\Pi_{\delta,\xi}^\perp\left\{  \phi-i^* \left[   f'_0 \left(U_{\delta,\xi} \right)\phi \right]\right\}.
\end{align*}

\begin{lemma}\label{B1}
Let $s>\frac{2 n}{n+2}$.  Then $\mathscr L_{1,0}$ is invertible and $\mathscr L_{1,0}^{-1}$ is a continuous function, i.e. there exists a constant $C>0$ such that
\begin{align*}
\left\|\mathscr L_{1,0}\psi\right\| \geq C\|\psi\|,\quad \left\|\mathscr L_{1,0} \psi\right\|_E \geq C\|\psi\|_E \quad \text{ for all }\psi \in K^\perp_{1,0}.
\end{align*}
\end{lemma}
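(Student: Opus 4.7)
The plan is to argue by contradiction, combining a Fredholm-type compactness argument with the well-known nondegeneracy of bubbles. Rewrite the operator as
$$\mathscr L_{1,0}(\phi)=\phi-T\phi,\qquad T\phi:=\Pi_{1,0}^\perp\circ i^*\bigl(pU_{1,0}^{p-1}\phi\bigr),\qquad \phi\in K_{1,0}^\perp,$$
where I used that $\phi\in K_{1,0}^\perp$ implies $\Pi_{1,0}^\perp\phi=\phi$. Suppose for contradiction that no $D^{1,2}$-estimate holds. Then there exists $\psi_k\in K_{1,0}^\perp$ with $\|\psi_k\|=1$ and $\|\mathscr L_{1,0}\psi_k\|\to 0$. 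Extract a subsequence with $\psi_k\rightharpoonup \psi$ weakly in $D^{1,2}(\rn)$. Since $pU_{1,0}^{p-1}\in L^{n/2}(\rn)$, Lemma \ref{lem:holder}(ii) shows that $\phi\mapsto i^*(pU_{1,0}^{p-1}\phi)$ is bounded $D^{1,2}\to D^{1,2}$, and a standard argument using the Hardy--Littlewood--Sobolev inequality together with Rellich compactness on balls (combined with the fast decay of $U_{1,0}^{p-1}$ at infinity) yields compactness of this map. Hence $T\psi_k\to T\psi$ strongly, and writing $\psi_k=\mathscr L_{1,0}\psi_k+T\psi_k$ gives $\psi_k\to T\psi$ strongly; consequently $\psi=T\psi$ with $\|\psi\|=1$.

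Now $\psi\in K_{1,0}^\perp$ (the subspace is closed under weak limits) and $\psi=\Pi_{1,0}^\perp i^*(pU_{1,0}^{p-1}\psi)$ means $\psi-i^*(pU_{1,0}^{p-1}\psi)\in K_{1,0}$. Testing this equality with each $Z_{1,0}^j$ and using that $Z_{1,0}^j$ itself solves \eqref{eq:linear equation} (so $Z_{1,0}^j=i^*(pU_{1,0}^{p-1}Z_{1,0}^j)$) together with $\langle \psi,Z_{1,0}^j\rangle=0$, one sees that the $K_{1,0}$-component vanishes, so $\psi$ solves \eqref{eq:linear equation} in $D^{1,2}(\rn)$. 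The nondegeneracy characterization $K_{1,0}=\mathrm{span}\{Z_{1,0}^0,\dots,Z_{1,0}^n\}$ then forces $\psi\in K_{1,0}\cap K_{1,0}^\perp=\{0\}$, contradicting $\|\psi\|=1$. This establishes the first inequality and, by the open mapping theorem (since $\mathscr L_{1,0}$ is continuous, injective, and $I-$(compact) has closed range with finite-dimensional cokernel), shows invertibility on $D^{1,2}$.

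For the second estimate, assume $\psi_k\in K_{1,0}^\perp\cap L^s(\rn)$ with $\|\psi_k\|_E=1$ and $\|\mathscr L_{1,0}\psi_k\|_E\to 0$. The $D^{1,2}$-part of the first estimate gives $\|\psi_k\|\to 0$, hence by Sobolev $|\psi_k|_{2^*}\to 0$ as well. The key remaining task is to show $|\psi_k|_s\to 0$, which will contradict $\|\psi_k\|_E=1$. From $\mathscr L_{1,0}\psi_k=\psi_k-\Pi_{1,0}^\perp i^*(pU_{1,0}^{p-1}\psi_k)$ and Lemmas \ref{lem:projections}, \ref{lem:holder}(i), it suffices to prove $|U_{1,0}^{p-1}\psi_k|_{sn/(n+2s)}\to 0$. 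The main obstacle here is that Hölder with the exponent pair $(n/2,s)$ only yields boundedness; to get convergence I would split $\rn=B_R\cup B_R^c$, use Rellich compactness on $B_R$ (noting $sn/(n+2s)<2^*$ since $s<2^*$) to bound the inner part by $o_k(1)$, and use the decay $U_{1,0}^{p-1}\in L^{2^*/(p-1)}$ together with Hölder on $B_R^c$ to make the outer part small as $R\to\infty$, uniformly in $k$. This is the step that requires the most care; everything else is routine given Lemmas \ref{lem:projections} and \ref{lem:holder}.
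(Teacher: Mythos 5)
Your proof is correct in substance but takes a genuinely different route from the paper's. The paper proceeds via the Fredholm alternative: since $L_{1,0}=I-i^*(f_0'(U_{1,0})\cdot)$ is self-adjoint with kernel $K_{1,0}$, it maps onto $K_{1,0}^\perp$, and a regularity/bootstrap step shows the preimage lies in $L^s$; invertibility on $E$ is then concluded, and \emph{both} norm estimates drop out simultaneously and for free from the open mapping theorem. You instead prove the $D^{1,2}$-estimate directly by a contradiction argument exploiting compactness of $\phi\mapsto \Pi_{1,0}^\perp i^*(pU_{1,0}^{p-1}\phi)$ and the nondegeneracy of $U_{1,0}$, and then re-run a second contradiction argument (with the $B_R$/$B_R^c$ splitting) to get the $E$-estimate. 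Both are standard strategies; yours is more hands-on and makes the role of nondegeneracy explicit, while the paper's is shorter because the open mapping theorem eliminates the need for the second compactness argument entirely.

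One small gap worth flagging: your argument establishes invertibility on $K_{1,0}^\perp\subset D^{1,2}$ and the two inequalities, but it does not actually verify surjectivity of $\mathscr L_{1,0}$ on $K_{1,0}^\perp\cap L^s$. Given $\psi\in K_{1,0}^\perp\cap L^s$, your reasoning produces $\phi\in K_{1,0}^\perp\subset D^{1,2}$ with $\mathscr L_{1,0}\phi=\psi$; to conclude invertibility on $E$ (as the lemma states) one still needs $\phi\in L^s$. This follows from $\phi=\psi+\Pi_{1,0}^\perp i^*(pU_{1,0}^{p-1}\phi)$: since $\phi\in L^{2^*}$ and $U_{1,0}^{p-1}\in L^t$ for every $t>n/4$, Hölder shows $U_{1,0}^{p-1}\phi\in L^{sn/(n+2s)}$ precisely when $s>\frac{2n}{n+2}$, and then Lemma \ref{lem:holder}$(i)$ gives $i^*(pU_{1,0}^{p-1}\phi)\in L^s$. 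This is exactly the step the paper carries out in its surjectivity paragraph; you should include it to make the lemma's invertibility claim complete.
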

\begin{proof}
\emph{\underline{Surjectivity:}} Let $\psi \in K^\perp_{1,0}$ and let $L_{1,0}:D^{1,2}(\rn)\to D^{1,2}(\rn)$ be given by $L_{1,0} \phi:= \phi-i^*\left[f_0^{\prime}(U_{1,0}) \phi\right]$. Since we know that the kernel of $L$ is $K_{1,0}$, then there is $\phi \in K_{1,0}^{\perp}$ such that $\phi-i^*\left[f_0^{\prime}(U_{1,0}) \phi\right]=L \phi =\psi \in(\operatorname{Ker} L)^{\perp}$. Now we verify that $\phi \in \mathrm{L}^s\left(\mathbb{R}^n\right)$. Indeed, since $\phi \in D^{1,2}(\mathbb{R}^n)$ and $f_0^{\prime}(U_{1,0}) \in \mathrm{L}^t\left(\mathbb{R}^n\right)$ for all $t>n / 4$, we have that $f_0^{\prime}(U_{1,0}) \phi \in \mathrm{L}^q\left(\mathbb{R}^n\right)$ for all $q \in(2 n /(n+6), n / 2)$ and then
\begin{align*}
i^*\left(f_0^{\prime}(U_{1,0})\phi\right) \in \mathrm{L}^s\left(\mathbb{R}^n\right) \quad \text{ for all } s>\frac{2 n}{n+2} .
\end{align*}
This yields the surjectivity.

\emph{\underline{Injectivity:}} If $\mathscr L_{1,0} \phi=0$, then $\phi\in K_{1,0}\cap K^\perp_{1,0}=\{0\}$.

\emph{\underline{Lower bound:}} Since $\mathscr L_{1,0}$ is a continuous map defined on the Banach space $K^\perp_{1,0}$, the open mapping theorem implies that $\mathscr L_{1,0}^{-1}$ is also continuous.
\end{proof}

\begin{lemma}\label{B2}
Let $2^*>s>\frac{2 n}{n+2}$, $d>0$, $\xi\in \Omega$, and $\delta=d\eps^\frac{1}{n}$.  Then $\mathscr L_{\delta,\xi}$ is invertible and $\mathscr L_{\delta,\xi}^{-1}$ is a continuous function uniformly with respect to $\eps$, i.e. there are $C>0$ and $\eps_0$ such that
\begin{align*}
\left\|\mathscr L_{\delta,\xi} \psi\right\|_E \geq C\|\psi\|_E \quad \text{ for all }\psi \in K^\perp_{\delta,\xi}\ \text{and for all $\eps\in(0,\eps_0)$.}
\end{align*}
\end{lemma}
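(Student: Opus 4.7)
The strategy is to reduce Lemma \ref{B2} to Lemma \ref{B1} via the dilation isometry $P_{\delta,\xi}$. First, I would prove the operator identity
\[
\mathscr L_{\delta,\xi} \;=\; P_{\delta,\xi}\circ \mathscr L_{1,0}\circ P_{\delta,\xi}^{-1}.
\]
The projection part is contained in \eqref{eq:proyecciones}; the content-bearing identity is
\[
i^*\bigl[f'_0(U_{\delta,\xi})\,\phi\bigr] \;=\; P_{\delta,\xi}\, i^*\bigl[f'_0(U_{1,0})\, P_{\delta,\xi}^{-1}\phi\bigr].
\]
This follows because $U_{\delta,\xi}=P_{\delta,\xi}U_{1,0}$ together with the critical exponent relation $(p-1)\tfrac{n-2}{2}=2$ make both sides weak $D^{1,2}(\rn)$-solutions of the Poisson equation with right-hand side $pU_{\delta,\xi}^{p-1}\phi$; uniqueness of $i^*$ then yields the claim.

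Second, for any $\psi\in K_{\delta,\xi}^\perp\cap L^s(\rn)$, I would set $\tilde\psi:=P_{\delta,\xi}^{-1}\psi\in K_{1,0}^\perp\cap L^s(\rn)$. The scaling properties of $P_{\delta,\xi}$, namely $\|P_{\delta,\xi}\eta\|=\|\eta\|$ and $|P_{\delta,\xi}\eta|_s=\delta^\alpha|\eta|_s$ with $\alpha:=n/s-(n-2)/2$, yield
\[
\|\psi\|_E=\max\bigl\{\|\tilde\psi\|,\,\delta^\alpha|\tilde\psi|_s\bigr\},\qquad
\|\mathscr L_{\delta,\xi}\psi\|_E=\max\bigl\{\|\mathscr L_{1,0}\tilde\psi\|,\,\delta^\alpha|\mathscr L_{1,0}\tilde\psi|_s\bigr\}.
\]
Since $s<2^*$ we have $\alpha>0$, so choosing $\eps_0$ small enough that $\delta=d\eps^{1/n}\in(0,1)$ gives $\delta^\alpha\le 1$.

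I would then split into two cases. If $\|\psi\|_E=\|\tilde\psi\|$, the first conclusion of Lemma \ref{B1} gives $\|\mathscr L_{\delta,\xi}\psi\|_E\ge \|\mathscr L_{1,0}\tilde\psi\|\ge C\|\tilde\psi\|=C\|\psi\|_E$. Otherwise $\|\psi\|_E=\delta^\alpha|\tilde\psi|_s>\|\tilde\psi\|$, and since $\delta^\alpha\le 1$ this forces $|\tilde\psi|_s>\|\tilde\psi\|$, hence $\|\tilde\psi\|_E=|\tilde\psi|_s$. The second conclusion of Lemma \ref{B1} then yields $\max\{\|\mathscr L_{1,0}\tilde\psi\|,|\mathscr L_{1,0}\tilde\psi|_s\}\ge C|\tilde\psi|_s$; if the $D^{1,2}$-term dominates, $\|\mathscr L_{\delta,\xi}\psi\|_E\ge \|\mathscr L_{1,0}\tilde\psi\|\ge C|\tilde\psi|_s\ge C\delta^\alpha|\tilde\psi|_s$, while if the $L^s$-term dominates, $|\mathscr L_{\delta,\xi}\psi|_s=\delta^\alpha|\mathscr L_{1,0}\tilde\psi|_s\ge C\delta^\alpha|\tilde\psi|_s$; in either subcase $\|\mathscr L_{\delta,\xi}\psi\|_E\ge C\|\psi\|_E$.

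The main obstacle is the mismatch between $P_{\delta,\xi}$ being a $D^{1,2}$-isometry but only a $\delta^\alpha$-dilation on $L^s$: this prevents a one-line transfer of Lemma \ref{B1} through a unitary equivalence and is precisely why the case split is needed. It is also the reason the restriction $s<2^*$ enters: without $\alpha>0$ the factor $\delta^\alpha$ could blow up as $\eps\to 0$, destroying the uniform bound. The lower bound $s>2n/(n+2)$ is inherited directly through the invocation of Lemma \ref{B1}.
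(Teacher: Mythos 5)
Your proposal is correct and follows essentially the same route as the paper: reduce to Lemma~\ref{B1} via the dilation $P_{\delta,\xi}$, exploiting that $P_{\delta,\xi}$ is a $D^{1,2}$-isometry but scales $L^s$ by $\delta^\alpha$. The only difference is organizational: the paper handles the norm mismatch with an additive manipulation (observing $2\|\cdot\|_E\ge\|\cdot\|+|\cdot|_s$ and then substituting and regrouping, requiring $1-\delta^\alpha>\tfrac12$), whereas you use a clean case split on which of $\|\tilde\psi\|$ or $\delta^\alpha|\tilde\psi|_s$ realizes $\|\psi\|_E$; you also make the conjugation identity $\mathscr L_{\delta,\xi}=P_{\delta,\xi}\circ\mathscr L_{1,0}\circ P_{\delta,\xi}^{-1}$ explicit (with the correct justification via uniqueness of $i^*$ and the critical relation $(p-1)\tfrac{n-2}{2}=2$), which the paper simply asserts in its display \eqref{beq}. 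Both versions are valid; yours is arguably a bit more transparent.
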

\begin{proof}
The invertibility follows as in Lemma~\ref{B1} and here we only argue the uniform bound on the inverse. Let $\psi\in K^\perp_{\delta,\xi}$ and let $\psi_{\delta,\xi}(x)=\delta^\frac{n-2}{2}\psi(\frac{x-\xi}{\delta}).$ Note that, by \eqref{eq:s_rescaling}, there is $\alpha>0$ such that
\begin{align}\label{beq}
 \left\|\mathscr L_{\delta,\xi} \psi_{\delta,\xi}\right\|
 =\left\|\mathscr L_{1,0} \psi\right\|
 \quad
 \text{ and }
 \quad
 |\mathscr L_{\delta,\xi} \psi_{\delta,\xi}|_s
 =\delta^\alpha|\mathscr L_{1,0} \psi|_s.
\end{align}
Hence, by Lemma~\ref{B1} and \eqref{beq}, we know that $ |\mathscr L_{1,0} \psi|_s
 \geq C(\|\psi\|+|\psi|_s)-\|L_{1,0}\psi\|,$ then
\begin{align*}
 2\left\|\mathscr L_{\delta,\xi} \psi_{\delta,\xi}\right\|_E
 &\geq \left\|\mathscr L_{\delta,\xi} \psi_{\delta,\xi}\right\|+|\mathscr L_{\delta,\xi} \psi_{\delta,\xi}|_s
 =\left\|\mathscr L_{1,0} \psi\right\|+\delta^\alpha|\mathscr L_{1,0} \psi|_s\\
 &\geq\left\|\mathscr L_{1,0} \psi\right\|+C\delta^\alpha(\|\psi\|+|\psi|_s)-\delta^\alpha\|L_{1,0}\psi\|\\
& =(1-\delta^\alpha)\left\|\mathscr L_{1,0} \psi\right\|+C\delta^\alpha(\|\psi\|+|\psi|_s)\\
&\geq C(1-\delta^\alpha)\left\|\psi\right\|+C\delta^\alpha |\psi|_s\geq C(\|\psi\|+\delta^\alpha|\psi|_s)
=C(\|\psi_{\delta,\xi}\|+|\psi_{\delta,\xi}|_s)
\end{align*}
for some $C>0$ and $\delta$ small enough so that $1-\delta^\alpha >\frac{1}{2}$.
\end{proof}

The next Lemma follows as \cite[Lemma B.11]{PST23}, here we present a proof for completeness.  We use the notation introduced in Section~\ref{sch:sec}.

\begin{lemma}\label{Lemma:I_1aux}
For $\gamma\leq 0$ small,
\[
|f_0(U_1-U_2) - f_0(U_1)+f_0(U_2)|_{\frac{(p+1)(1+\gamma)}{p}}=\begin{cases}
O(\delta^{n-2}) & \text{ if } 3\leq n<6,\\
O(\delta^{n-2}) & \text{ if } n=6 \text{ and } \gamma<0,\\
O(\delta^4|\log \delta|^\frac{2}{3}) & \text{ if } n=6 \text{ and } \gamma=0,\\
O(\delta^{\frac{n+2}{2}}) & \text{ if } n>6,
\end{cases}
\]
as $\delta\to 0$. Note that
\begin{align*}
 \frac{(p+1)(1+\gamma)}{p}&=\frac{2n}{n+2}\qquad \text{ for }\gamma=0,\\
  \frac{(p+1)(1+\gamma)}{p}&=\frac{ns}{n+2s}\qquad \text{ for }\gamma=\frac{n (s-2)-2 s}{2 (n+2 s)}<0
\end{align*}
for $\frac{n}{n-2}<s<2^*.$
\end{lemma}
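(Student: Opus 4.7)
The plan is to estimate the $L^q$-norm of $F := f_0(U_1-U_2) - f_0(U_1) + f_0(U_2)$ with $q = (p+1)(1+\gamma)/p$ by combining a region decomposition with the pointwise Taylor-type bound of Lemma~\ref{lem:li}. I will focus first on the case $\gamma = 0$ (so $q = 2n/(n+2)$); the case $\gamma < 0$ is treated identically, with the critical dimension shifted.

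\textbf{Step 1 (Region decomposition and pointwise bound).} Set $\Omega_\ast := \{U_1 \geq U_2\}$. Since $F$ is antisymmetric under swapping $U_1 \leftrightarrow U_2$, it suffices to control $F\,\mathds{1}_{\Omega_\ast}$. On $\Omega_\ast$ one has $U_1-U_2 \geq 0$, so $f_0(U_1-U_2)=(U_1-U_2)^p$; applying the second inequality of Lemma~\ref{lem:li} with $r=p-1$, $a=U_1$, $b=-U_2$ (so $|b|\leq a$), I get $|(U_1-U_2)^p - U_1^p + pU_1^{p-1}U_2| \leq C U_1^{p-2}U_2^2$ in both cases $p\geq 2$ and $p<2$ (using $\max$ resp.\ $\min$). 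Combining this with $|U_2^p - pU_1^{p-1}U_2| \leq U_2^p + pU_1^{p-1}U_2 \leq C U_1^{p-1}U_2$ (last inequality because $U_2\leq U_1$ on $\Omega_\ast$) and the pointwise domination $U_1^{p-2}U_2^2 \leq U_1^{p-1}U_2$, one arrives at the clean pointwise bound
\[
|F|\,\mathds{1}_{\Omega_\ast} \leq C\, U_1^{p-1}U_2.
\]

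\textbf{Step 2 (Scaling the interaction integral).} The problem reduces to computing $|U_1^{p-1}U_2\,\mathds{1}_{\Omega_\ast}|_q$ (the symmetric piece on $\Omega_\ast^c$ is bounded by the same expression with indices swapped). Using $U_1^{p-1}(x)\sim \delta^2(\delta^2+|x-\xi_1|^2)^{-2}$ and the fact that $U_2$ is essentially constant of size $\delta^{(n-2)/2}R^{-(n-2)}$ on $\{|x-\xi_1|\leq R/2\}$ (where $R:=|\xi_1-\xi_2|$ is bounded below on compact subsets of $D_2(\Omega)$), the change of variables $y=(x-\xi_1)/\delta$ reduces the leading contribution to the scaling integral
\[
I(\delta):=\int_{|y|\lesssim R/\delta}\frac{\d y}{(1+|y|^2)^{2q}},
\]
whose behavior depends on the sign of $n-4q$: if $n<4q$ then $I(\delta)\to C<\infty$; if $n=4q$ then $I(\delta)\sim |\log\delta|$; if $n>4q$ then $I(\delta)\sim (R/\delta)^{n-4q}$. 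Tracking all remaining scalings produces, respectively, $O(\delta^{n-2})$, $O(\delta^{n-2}|\log\delta|^{1/q})$, and $O(\delta^{(n+2)/2})$ for the $L^q$-norm. For $\gamma=0$, the transition $n=4q$ occurs exactly at $n=6$ with $1/q=2/3$, yielding the three dimensional regimes in the statement (including the logarithmic factor $|\log\delta|^{2/3}$ at $n=6$). For $\gamma<0$ the exponent $q$ is slightly smaller, so $4q<n$ at $n=6$ and the borderline is avoided; routine estimation of the contributions from $|x-\xi_1|>R/2$ shows they are of the same or lower order. For the second exponent $q=ns/(n+2s)$ (with $s<2^*$), the same scheme applies and gives the stated bounds.

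\textbf{Main difficulty.} The principal technical obstacle is the bookkeeping of the bubble-interaction integrals across the different dimensional regimes, and in particular the extraction of the precise logarithmic correction $|\log\delta|^{2/3}$ in the borderline case $n=6$, $\gamma=0$, where one must be careful not to lose the $q$-th-root factor when passing from the integral $I(\delta)\sim |\log\delta|$ to the $L^q$-norm. Uniformity in the parameters $(d_1,d_2,t_1,t_2,\eta_1,\eta_2)$ on compact subsets of $(0,\infty)^4\times D_2(\Omega)$ is automatic, because $R$ is bounded away from zero on such sets.
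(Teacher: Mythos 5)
Your proof is correct, and it takes a genuinely different route from the paper's. The paper decomposes $\mathbb{R}^n$ into two small balls $\omega_\pm=B_r(\xi_j)$ and their complement, then (on $\omega_+$, say) splits $|f_0(W)-f_0(U_1)+f_0(U_2)|\le \big||W|^{p-1}W-U_1^p\big|+U_2^p$, invokes \cite[Lemma~B.1]{PST23} for a pointwise bound on the first piece and \cite[Lemma~B.3]{PST23} for the scaling asymptotics of the resulting interaction integral. You instead exploit the antisymmetry of $F:=f_0(U_1-U_2)-f_0(U_1)+f_0(U_2)$ under $U_1\leftrightarrow U_2$ to reduce to the region $\{U_1\ge U_2\}$, where the paper's own Lemma~\ref{lem:li} (applied with $r=p-1$, $a=U_1$, $b=-U_2$, together with $U_2\le U_1$ there to compare the $\min$/$\max$ terms) yields the single clean pointwise bound $|F|\le CU_1^{p-1}U_2$, and then you compute the $L^q$-norm by scaling directly. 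The computations agree: with $q=(p+1)(1+\gamma)/p$, the near-$\xi_1$ contribution has exponent $n-2q+\tfrac{(n-2)q}{2}$ on $\delta$ before the integral $I(\delta)=\int_{|y|\lesssim R/\delta}(1+|y|^2)^{-2q}\,dy$, whose convergence/log/divergence trichotomy at $n=4q$ reproduces exactly the three dimensional regimes (and the $|\log\delta|^{1/q}=|\log\delta|^{2/3}$ factor at $n=6$, $\gamma=0$), while the far-field tail is $O(\delta^{(n+2)/2})$, never worse than the claimed rate. The upshot is that your argument is more self-contained (it needs only Lemma~\ref{lem:li}, not the external PST23 lemmas), the pointwise bound is cleaner since $U_2^p$ is absorbed a priori rather than estimated separately, and the region decomposition by $\{U_1\gtrless U_2\}$ makes the symmetry manifest. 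The one place where you are somewhat terse is the dismissal of the contributions from $|x-\xi_1|>R/2$ as ``routine'': that claim is correct, but for a complete write-up you should record that on this region one has $(U_1^{p-1}U_2)^q\lesssim \delta^{(n+2)q/2}|x-\xi_1|^{-(n+2)q}$ (using $|x-\xi_2|\gtrsim|x-\xi_1|$ on $\{U_1\ge U_2\}$), giving $O(\delta^{(n+2)/2})$ after the $q$-th root, which is dominated by the leading term when $n\le 6$ and matches it when $n>6$.
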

\begin{proof}
Let $r:=\frac{1}{4}\operatorname{dist}(\xi_1,\xi_2)$
We use the notation $\omega_+:=B_{r}(\xi_1)$ and $\omega_-:=B_{r}(\xi_2).$ Let $W=U_1-U_2.$ Observe that
\begin{align}
\int_{\omega_+} | f_0(W) - f_0(U_1)+&f_0(U_2) |^\frac{(p+1)(1+\gamma)}{p} =\int_{\omega_+} \left| |W|^{p-1}W - U_1^p+U_2^p \right|^\frac{(p+1)(1+\gamma)}{p} \nonumber \\
				&\leq C \int_{\omega_+} \left| |W|^{p-1}W - U_1^p \right|^\frac{(p+1)(1+\gamma)}{p} + C\int_{\omega_+} U_2^{(p+1)(1+\gamma)}\label{eq:estimateR_aux2}.
\end{align}
Now, since $|x-\xi_2|\geq c>0$ in $\omega_+$ for some $c>0$,
\begin{align}\label{eq:estimateR_aux3}
\int_{\omega_+} U_2^{(p+1)(1+\gamma)} = \int_{\omega_+} \left( \frac{\alpha_n^{p+1}\delta^n}{(\delta^2+|x-\xi_2|^2)^n}\right)^{1+\gamma} \, dx \leq C |\omega_+| \delta^{n(1+\gamma)} = O(\delta^{n(1+\gamma)}).
\end{align}
Applying the second statement of \cite[Lemma B.1]{PST23} with $q:=p$, $a:=U_1$ and $b:=-U_{\delta,-e_2}$, we obtain
\begin{align}\label{eq:estimateR_aux3.2}
\int_{\omega_+} \left| |W|^{p-1}W-U_1^p \right|^\frac{(p+1)(1+\gamma)}{p}
			&\leq C \int_{\omega_+} \left(|U_1|^{p-1}U_2 + |U_2|^{p}\right)^\frac{(p+1)(1+\gamma)}{p}\\
			& \leq C' \int_{\omega_+} U_1^\frac{(p-1)(p+1)(1+\gamma)}{p} U_2^\frac{(p+1)(1+\gamma)}{p} + C' \int_{\omega_+} |U_2|^{(p+1)(1+\gamma)} \nonumber \\
			&\leq O(\delta^\frac{n(n-2)(1+\gamma)}{n+2}) \int_{\omega_+} |U_1|^\frac{8n(1+\gamma)}{(n+2)(n-2)} + O(\delta^{n(1+\gamma)}). \label{eq:estimateR_aux1}
\end{align}
By \cite[Lemma B.3]{PST23} with $q:=\frac{8n}{(n+2)(n-2)}$ and noting  that $0<q<\frac{2n}{n-2}$ and that $q<\frac{n}{n-2}$ if and only if $n>6$,
\begin{equation}\label{eq:estimateR_aux4}
\int_{\omega_+} U_1^\frac{8n}{(n+2)(n-2)} =\begin{cases}
O(\delta^\frac{n(n-2)}{n+2}) & \text{ if } 3\leq n<6,\\
O(\delta^3|\log \delta|) & \text{ if } n=6,\\
O(\delta^\frac{4n}{n+2}) & \text{ if } n>6.
\end{cases}
\end{equation}
while, for $\gamma<0$ sufficiently small,
\begin{equation}\label{eq:estimateR_aux5}
\int_{\omega_+} U_1^\frac{8n(1+\gamma)}{(n+2)(n-2)} =\begin{cases}
O(\delta^\frac{n(n-2)(1+\gamma)}{n+2}) & \text{ if } 3\leq n \leq 6,\\
O(\delta^\frac{4n(1+\gamma)}{n+2}) & \text{ if } n>6.
\end{cases}
\end{equation}

Going back to \eqref{eq:estimateR_aux1} and combining it with \eqref{eq:estimateR_aux4}--\eqref{eq:estimateR_aux5}, we see that, if $\gamma=0$,
\begin{align*}
\int_{\omega_+} \left| |W|^{p-1}W - U_1^p+U_2^p \right|^\frac{2n}{n+2}
		&=\begin{cases}
O(\delta^\frac{2n(n-2)}{n+2}) & \text{ if } 3\leq n < 6,\\
O(\delta^6|\log \delta|) & \text{ if } n=6,\\
O(\delta^{n(1+\gamma)}) & \text{ if } n>6
\end{cases}
\end{align*}
and, if $\gamma <0$ small,
\begin{align*}
\int_{\omega_+} \left| |W|^{p-1}W - U_1^p+U_2^p \right|^\frac{(p+1)(1+\gamma)}{p}
		&=\begin{cases}
O(\delta^\frac{2n(n-2)(1+\gamma)}{n+2}) & \text{ if } 3\leq n \leq 6,\\
O(\delta^{n(1+\gamma)}) & \text{ if } n>6.
\end{cases}
\end{align*}
Exchanging the roles of $\xi_1$ and $\xi_2$, we have precisely the same type of estimate for
\[
\int_{\omega_-} \left| |W|^{p-1}W - U_1^p+U_2^p \right|^\frac{(p+1)(1+\gamma)}{p}.
\] Finally, since $|x-\xi_1|,|x-\xi_2|\geq c$ for $x\in \Omega \backslash (\omega_+ \cup \omega_-)$, we have
\begin{align*}
\int_{\Omega \backslash (\omega_+ \cup \omega_-)} \left| |W|^{p-1}W - U_1^p+U_2^p \right|^\frac{(p+1)(1+\gamma)}{p} 	&\leq C \int_{\Omega \backslash (\omega_+ \cup \omega_-)} \left( U_1^{(p+1)(1+\gamma)}+ U_2^{(p+1)(1+\gamma)}\right)=O(\delta^{n(1+\gamma)}),
\end{align*}
which concludes the proof.
\end{proof}

\subsection*{Acknowledgments}

We thank the anonymous referees for their careful reading and valuable comments, which helped improve the quality of the paper. M. Clapp and A. Saldaña thank La Sapienza Università di Roma for the kind hospitality during the preparation of this work.


\bigskip

\begin{flushleft}
\textbf{Mónica Clapp}\\
Instituto de Matemáticas\\
Universidad Nacional Autónoma de México \\
Campus Juriquilla\\
76230 Querétaro, Qro., Mexico\\
\texttt{monica.clapp@im.unam.mx}
\medskip

\textbf{Alberto Saldaña}\\
Instituto de Matemáticas,\\
Universidad Nacional Autónoma de México,\\
Circuito Exterior, Ciudad Universitaria,\\
04510 Coyoacán, Ciudad de México, Mexico.\\
\texttt{alberto.saldana@im.unam.mx}
\medskip

\textbf{Angela Pistoia}\\
Dipartimento di Metodi e Modelli Matematici,\\
La Sapienza Università di Roma,\\
Via Antonio Scarpa 16,\\
00161 Roma, Italy.\\
\texttt{angela.pistoia@uniroma1.it}
\end{flushleft}

\end{document}